\newtheorem{theorem}{theorem}[section]
\newtheorem{lemma}[theorem]{Lemma}
\newtheorem{cor}[theorem]{cor}
\newtheorem{prop}[theorem]{prop}
\newtheorem{defi}[theorem]{Definition}
\def\w{\ensuremath{\mathsf w}}
\def\x{{\xi}}
\def\y{\zeta}
\def\z{\eta}
\def\u{\delta}
\newcommand{\BNF}{\ensuremath{{\sf BNF}}\xspace}
\newcommand{\B}{\ensuremath{{\mathcal{B}}\xspace}}
\newcommand{\glp}{{\ensuremath{\mathsf{GLP}}}\xspace}
\newcommand{\pa}{\ensuremath{{\mathrm{PA}}}\xspace}
\newcommand{\zfc}{\ensuremath{{\mathrm{ZFC}}}\xspace}
\newcommand{\gl}{{\ensuremath{\textup{\textbf{GL}}}}\xspace}
\newcommand{\idel}[1]{{\ensuremath {\mathrm{I}\Delta_{#1}}}\xspace}
\newcommand{\ea}{\ensuremath{{\rm{EA}}}\xspace}
\newcommand{\la}{\langle}
\newcommand{\ra}{\rangle}
\def\le{{\ell}}
\def\ex{e}
\def\fmodels{\xymatri\x{
\ar@{|=}[r]^{<\omega}&
}
}
\def\nmodels{\xymatri\x{
\ar@{|=}[r]^{N}&
}
}
\def\<{\left <}
\def\>{\right >}
\def\cbra{\left \{}
\def\cket{\right \}}
\DeclareSymbolFont{AMSb}{U}{msb}{m}{n}
\DeclareMathSymbol{\N}{\mathbin}{AMSb}{"4E}
\DeclareMathSymbol{\Z}{\mathbin}{AMSb}{"5A}
\DeclareMathSymbol{\R}{\mathbin}{AMSb}{"52}
\DeclareMathSymbol{\Q}{\mathbin}{AMSb}{"51}
\DeclareMathSymbol{\I}{\mathbin}{AMSb}{"49}
\DeclareMathSymbol{\C}{\mathbin}{AMSb}{"43}
\begin{document}

\title{Well-orders in the transfinite Japaridze algebra II:\\
Turing progressions and their well-orders}

\author{David Fern\'andez-Duque,\\
 Department of Mathematics,\\
  Instituto Tecnol\'ogico Aut\'onomo de M\'exico,\\
	david.fernandez@itam.mx\\
 \ \\
 Joost J. Joosten, \\
 Department of Logic, History and Philosophy of Science,\\ University of Barcelona,
 \\ jjoosten@ub.edu}  





\maketitle

\begin{abstract}
  \noindent We study transfinite extensions of Japaridze's provability logic GLP and the well-founded relations that naturally occur within them. Every ordinal induces a partial order over the class of ``words,'' which are iterated consistency statements expressible within GLP. Well-ordered restrictions of these partial orders have been studied previously; in this paper we consider the unrestricted partial orders, which are no longer linear but remain well-founded. These unrestricted partial orders bear important repercussions on modal semantics for GLP and on Turing progressions.
  
Large part of this document has been merged with \cite{FernandezJoosten:2012:WellOrders:Version1} to obtain the document \cite{FernandezJoosten:2012:WellOrders}.  
\\

{\bf Keywords}: {Modal logic, Proof theory, Ordinal analysis, Turing progressions}; ACM subject class: {F.4.1, F.1.3}.
\end{abstract}

\section{Introduction}

  This paper is a follow-up to \cite{FernandezJoosten:2012:WellOrders, FernandezJoosten:2012:TuringProgressions} and studies the poly-modal provability logics $\glp_\Lambda$ and natural well-founded orders therein. For each ordinal $\Lambda$ one can define a propositional provability logic $\glp_\Lambda$ that has for each $\alpha < \Lambda$ a modal operator $[\alpha]$ corresponding to $\alpha$-provability and a dual operator $\la \alpha \ra$ corresponding to $\alpha$-consistency. By \glp we denote that class-size logic that has a modality for each ordinal.

Worms are iterated consistency statements of the form $\la \alpha_1 \ra \ldots \la \alpha_n \ra \top$. We denote the class of all worms by $S$ and  by $S_\alpha$ denote the class of worms all of whose occurring modalities are at least $\alpha$. $S_\alpha$ can be naturally ordered by defining $A<_\alpha B :\Leftrightarrow \glp\vdash B \to \la \alpha \ra A$. The ordered structures $\la S_\alpha, <_\alpha\ra$ have been extensively studied (\cite{Beklemishev:2005:VeblenInGLP, BeklemishevFernandezJoosten:2012:LinearlyOrderedGLP, FernandezJoosten:2012:WellOrders}) and it is known that, modulo provable equivalence, they define well-orders and constitute alternative ordinal notation systems. 

In particular, for each worm $A\in S_\alpha$ the set $\{B\in S_\alpha \mid B<_\alpha A \}$ is (again, modulo provable equivalence), a well-order with an order-type we shall denote $o_\alpha(A)$. In this paper we see how a small change  in the definition of these sets makes a tremendous difference: the sets $\{B{\in} S \mid B<_\alpha A \}$ are well-founded but exhibit infinite anti-chains for $\alpha >0$. 

Let us denote the supremum of order types of chains in  $\{B{\in} S {\mid} B{<_\alpha} A \}$ by $\Omega_\alpha(A)$. Our main goal is to fully characterize which sequences of ordinals can be attained as $\la \Omega_\xi (A) \ra_{\xi \in {\sf Ord}}$ for some worm $A$ and explore the relation between these sequences, Turing progressions and modal semantics for the closed fragment of $\glp_\Lambda$. 

We give both a local and a global characterization of such sequences. The global characterization is given in terms of so-called \emph{cohyperations} of ordinal functions. Cohyperations are defined as an infinite iterate of particular ordinal functions.

\subsection{Background}

The provability logic of an arithmetic theory $T$ is a modal logic where the $\Box$ modality is interpreted as the formalization of ``provable in $T$". The structural propositional behavior of formalized provability in sound r.e.\ theories is characterized by Solovay's theorem \cite{Solovay:1976} and is known to be the modal logic \gl that we shall introduce below.

It is known that provability logics are very stable in that \emph{any} sound r.e.\ theory that extends some rather weak arithmetic theory as $I\Delta_0 + \exp$ has the same provability logic \gl. And, as a matter of fact, one can also weaken the assumption of recursively enumerable axiomatizability. In particular it is known that \gl is also the logic of provability when interpreting the $\Box$ operator as ``provable with $n$ applications of the omega-rule" or ``provable in $T$ together with all true $\Pi_n$-formulas", etc.

Japaridze (\cite{Japaridze:1988}) introduced a logic $\glp_\omega$ (details follow below) that has for each natural number $n$ a modality $[n]$ where we interpret $[n]$ as ``provable by $n$ applications of the $\omega$-rule''. He showed this logic $\glp_\omega$ to be arithmetically sound and complete for this interpretation. Ignatiev then showed in \cite{Ignatiev:1993:StrongProvabilityPredicates} that this completeness result actually holds for a wide range of arithmetical readings of $[n]$. 

In particular, we still have completeness of $\glp_\omega$ when reading $[n]$ as a natural formalization of ``provable in \ea together with all true $\Pi^0_n$ sentences''. We shall see that under this reading, the logic $\glp_\omega$ is closely related to Turing progressions (also defined below); these are hierarchies of theories of increasing strength introduced by Turing in his doctoral dissertation under Alonzo Church. An historic account of their origin and significance may be found in \cite{feferman}.

Interest in the logic $\glp_\omega$ and related systems recently revived when Beklemishev applied $\glp_\omega$ to perform a $\Pi^0_1$-ordinal analysis for Peano arithmetic (\pa) and related systems (\cite{Beklemishev:2004:ProvabilityAlgebrasAndOrdinals}). 

Moreover, it turned out that $\glp_\omega$ and fragments poses very interesting properties. One can debate over the notion of natural, but arguably it is the first natural example of an axiomatically defined logic that is not Kripke complete but that is complete with respect to its natural class of topological spaces \cite{BekBezhIcard2010}. However, if one restricts oneself to natural ordinal spaces with their corresponding canonical topologies then the question of completeness becomes dependent on set-theoretical assumptions which are themselves independent of \zfc (\cite{Blass:1990:InfinitaryCombinatorics, Beklemishev:2011:OrdinalCompleteness}).

The ordinal analysis that Beklemishev performed for \pa and its kin was actually carried out almost entirely within the closed fragment $\glp^0_\omega$ of $\glp_\omega$, that is, those theorems of $\glp_\omega$ that do not contain propositional variables but rather are built up from $\bot$, $\top$ and the modal and Boolean connectives. Particular terms --so called \emph{worms}-- within this fragment constitute an alternative ordinal notation system for ordinals below $\varepsilon_0$. 

In order to obtain an ordinal notation system based on worms that goes beyond $\varepsilon_0$, Beklemshev considered in \cite{Beklemishev:2005:VeblenInGLP} the logics $\glp_\Lambda$ with $\Lambda > \omega$. These logics are like $\glp_\omega$ with the sole exception that they now contain a modality $[\alpha]$ for each $\alpha < \Lambda$ together with their corresponding axioms and rules. Beklemishev also introduced a class-size logic \glp that contains a modality $[\alpha]$ for each ordinal $\alpha$.

In \cite{Beklemishev:2005:VeblenInGLP}, Beklemishev also established a correspondence between the ordinal notation system based on worms and the more familiar one based on so-called Veblen normal forms. This relation was studied in more detail in \cite{FernandezJoosten:2012:WellOrders} where in particular the authors worked with so-called \emph{hyperations} instead of Veblen functions. 

Hyperations are transfinite iterations of normal ordinal functions which can be seen as a natural refinement of the Veblen functions in particular, and more in general of any Veblen progression of normal ordinal functions. The theory of hyperations is inspired by problems that arose in the study of \glp but is studied and developed on an independent footing by the authors in \cite{FernandezJoosten:2012:Hyperations}.

In the current paper we study natural and important generalizations of the orderings on worms that were studied in both \cite{Beklemishev:2005:VeblenInGLP} and \cite{FernandezJoosten:2012:WellOrders}. Parts of the results presented in this paper have been presented in \cite{FernandezJoosten:2012:TuringProgressions}. In the current paper, these results are presented with more detail. Moreover, we present an important improvement that is based on the techniques of so-called \emph{cohyperations}. In order to sketch an outline of this paper we first need to formally introduce the logics $\glp_\Lambda$, their closed fragments and the worms that dwell therein.

\subsection{\texorpdfstring{The logics $\glp_{\Lambda}$}{The full logics}}
The language of $\glp_{\Lambda}$ is that of propositional modal logic that contains for each $\alpha < \Lambda$ a unary modal operator $[\alpha]$.
In the definition below the $\alpha$ and $\beta$ range over ordinals and the $\psi$ and $\chi$ over formulas in the language of $\glp_{\Lambda}$. 
\begin{defi}
For $\Lambda$ an ordinal, the logic $\glp_{\Lambda}$ is the propositional normal modal logic that has for each $\alpha < \Lambda$ a modality $[\alpha ]$ and is axiomatized by the following schemata:
\[
\begin{array}{ll}
[\alpha] (\chi \to \psi) \to ([\alpha] \chi \to  [\alpha]\psi), & \\
{}[ \alpha ] ([\alpha] \chi \to \chi) \to [\alpha] \chi, &\\
\langle \alpha \rangle \psi \to [\beta] \langle \alpha \rangle \psi & \mbox{for  $\alpha < \beta$,}\\
{}[\alpha] \psi \to [\beta] \psi &\mbox{for  $\alpha \leq \beta$}. 
\end{array}
\]
The rules of inference are Modus Ponens and necessitation for each modality: $\frac{\psi}{[\alpha]\psi}$.
By \glp we denote the class-size logic that has a modality $[\alpha]$ for each ordinal $\alpha$ and all the corresponding axioms and rules.
\end{defi}
It is good to recall that from L\"ob's axiom ${}[ \alpha ] ([\alpha] \chi \to \chi) \to [\alpha] \chi $ one can easily derive transitivity, that is,
\[
[\alpha] \chi \to [\alpha ] [\alpha ] \chi,
\]
and we shall use this freely in our reasoning. The classical G\"odel-L\"ob provability logic \gl is denoted by $\glp_1$.


\subsection{\texorpdfstring{Worms and the closed fragment of \glp}{Worms and the closed fragment}}
A closed formula in the language of \glp is simply a formula without propositional variables. In other words, closed formulas are generated by just $\top$ and the Boolean and modal operators. 

The closed fragment of \glp is just the class of closed formulas provable in \glp and is denoted by $\glp^0$.
Within this closed fragment and the corresponding algebra, there is a particular class of privileged inhabitants/terms which are called \emph{worms}.

\begin{defi}[Worms, $S$, $S_{\alpha}$]
By $S$ we denote the set of \emph{worms} of \glp which is inductively defined as $\top \in S$ and $A\in S \Rightarrow \langle \alpha \rangle A \in S$. Similarly, we inductively define for each ordinal $\alpha$ the set of worms $S_{\alpha}$ where all ordinals are at least $\alpha$ as $\top \in S_{\alpha}$ and $A\in S_{\alpha} \wedge \beta \geq \alpha \Rightarrow \langle \beta \rangle A \in S_\alpha$.
\end{defi}

Both the closed fragment of \glp and the set of worms have been studied  in \cite{Beklemishev:2005:VeblenInGLP} and \cite{BeklemishevFernandezJoosten:2012:LinearlyOrderedGLP}. 
Worms can be conceived as the backbone of $\glp^0$ and obtain their name from the heroic worm-battle, a variant of the Hydra battle (see \cite{Beklemishev:2006}).

We shall identify a worm $A$ in the obvious way with $\iota(A)$, the string of ordinals in the consistency statements that is involved in $A$: $\iota(\top) = \lambda$ and $\iota (\langle \alpha \rangle A) = \alpha {\ast} \iota(A)$. In this paper $\lambda$ will denote the empty string. 

Apart from identifying a worm with its corresponding string of ordinals we shall use any hybrid combination in between at times. For example, we might equally well write $1 0 \omega$, as $\langle 1 \rangle 0 \omega$, or $\langle 1 \rangle \langle 0 \rangle \langle \omega \rangle \top$. Moreover, \emph{par abus de langage} we shall write $\xi \in A$ to denote that the modality $\la \xi \ra$ occurs in the worm $A$.

The following lemma follows easily from the axioms of \glp and shall be used repeatedly without explicit mention in the remainder of this paper.

\begin{lemma}\label{lemma:basicLemma}\ \\
\vspace{-0.4cm}
\begin{enumerate}
\item\label{lemma:basicLemma1}
For a \glp formula $\phi$ and a worm B, if $\beta < \alpha$, then \\
$\glp \vdash (\langle \alpha \rangle \phi \wedge \langle \beta \rangle B) \leftrightarrow \langle \alpha \rangle(\phi \wedge \langle \beta \rangle B)$;

\item\label{lemma:basicLemma2}
If $A\in S_{\alpha+1}$, then $\glp \vdash A \wedge \langle \alpha \rangle B \leftrightarrow A\alpha B$;

\item \label{lemma:basicLemma4}
If $A, B \in S_{\alpha}$ and $\glp \vdash A \leftrightarrow B$, then\\ $\glp \vdash A\alpha C \leftrightarrow B \alpha C$. 
\end{enumerate}
\end{lemma}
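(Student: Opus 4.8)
The three items are best handled in order, each feeding the next. Items~\ref{lemma:basicLemma1} and~\ref{lemma:basicLemma2} are direct syntactic derivations from the axioms together with standard normal-modal reasoning, while item~\ref{lemma:basicLemma4} is the substantial one and I would derive it from the first two. Throughout I will use the derived transitivity $[\gamma]\chi \to [\gamma][\gamma]\chi$, the monotonicity axiom $[\gamma]\psi \to [\delta]\psi$ (for $\gamma \leq \delta$) in its dual form $\langle\delta\rangle\psi \to \langle\gamma\rangle\psi$, and the third axiom in the form $\langle\xi\rangle\psi \to [\zeta]\langle\xi\rangle\psi$ for $\xi < \zeta$.

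For item~\ref{lemma:basicLemma1}, I would prove the two implications separately. For the left-to-right direction, since $\beta < \alpha$ the third axiom (instantiated at $\xi = \beta$, $\zeta = \alpha$) yields $\langle\beta\rangle B \to [\alpha]\langle\beta\rangle B$; combining $\langle\alpha\rangle\phi$ with $[\alpha]\langle\beta\rangle B$ and the basic normal-modal fact $\langle\alpha\rangle\phi \wedge [\alpha]\theta \to \langle\alpha\rangle(\phi\wedge\theta)$ gives $\langle\alpha\rangle(\phi \wedge \langle\beta\rangle B)$. For the right-to-left direction, monotonicity of the diamond gives $\langle\alpha\rangle(\phi\wedge\langle\beta\rangle B) \to \langle\alpha\rangle\phi$ and $\langle\alpha\rangle(\phi\wedge\langle\beta\rangle B)\to\langle\alpha\rangle\langle\beta\rangle B$, so it remains to see $\langle\alpha\rangle\langle\beta\rangle B \to \langle\beta\rangle B$. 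This last step is the only nonformal point: from transitivity $[\beta]\neg B \to [\beta][\beta]\neg B$ and monotonicity $[\beta][\beta]\neg B \to [\alpha][\beta]\neg B$ (using $\beta \leq \alpha$) we obtain $[\beta]\neg B \to [\alpha][\beta]\neg B$, whose contrapositive is exactly $\langle\alpha\rangle\langle\beta\rangle B \to \langle\beta\rangle B$.

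Item~\ref{lemma:basicLemma2} then follows by induction on the length of $A$. The base case $A = \top$ is immediate since $\top\alpha B = \langle\alpha\rangle B$. For $A = \langle\gamma\rangle A'$ with $\gamma \geq \alpha+1$ and $A' \in S_{\alpha+1}$, the hypothesis $\alpha < \gamma$ lets me apply item~\ref{lemma:basicLemma1} (with $\phi := A'$) to get $\langle\gamma\rangle A' \wedge \langle\alpha\rangle B \leftrightarrow \langle\gamma\rangle(A'\wedge\langle\alpha\rangle B)$; the induction hypothesis gives $A'\wedge\langle\alpha\rangle B \leftrightarrow A'\alpha B$, and congruence under $\langle\gamma\rangle$ (necessitation plus distribution) turns this into $\langle\gamma\rangle(A'\wedge\langle\alpha\rangle B) \leftrightarrow \langle\gamma\rangle(A'\alpha B) = A\alpha C$, as required.

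For item~\ref{lemma:basicLemma4} the plan is to pass to an $\alpha$-block normal form. Writing $A = A_0\,\alpha\,A_1\,\alpha\cdots\alpha\,A_k$ with each $A_i \in S_{\alpha+1}$ (splitting $A$ at its occurrences of the modality $\alpha$), repeated use of item~\ref{lemma:basicLemma2} and congruence yields $\glp \vdash A \leftrightarrow A_0 \wedge \langle\alpha\rangle(A_1 \wedge \langle\alpha\rangle(\cdots \wedge \langle\alpha\rangle A_k)\cdots)$, and likewise $\glp \vdash A\alpha C \leftrightarrow A_0 \wedge \langle\alpha\rangle(A_1 \wedge \cdots \wedge \langle\alpha\rangle(A_k \wedge \langle\alpha\rangle C)\cdots)$; the point is that tail-extension by $\alpha C$ acts by replacing the innermost block $A_k$ with $A_k \wedge \langle\alpha\rangle C$ inside an otherwise fixed nesting. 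The main obstacle is precisely here: from $\glp \vdash A \leftrightarrow B$ I must recover that the outermost decompositions match up to provable equivalence, i.e.\ that $A_0 \wedge \langle\alpha\rangle R \leftrightarrow B_0 \wedge \langle\alpha\rangle Q$ (with $A_0,B_0 \in S_{\alpha+1}$ and $R,Q \in S_\alpha$) forces $\glp \vdash A_0 \leftrightarrow B_0$ and $\glp\vdash R \leftrightarrow Q$, after which an induction on the combined length of $A$ and $B$ closes the argument via congruence. I expect this \emph{uniqueness of the $\alpha$-block decomposition up to provable equivalence} to be the crux; I would establish it by invoking the normal-form and well-ordering theory of $\langle S_\alpha, <_\alpha\rangle$ (so that $A \leftrightarrow B$ is controlled by a canonical representative), or, if a purely syntactic route is preferred, by extracting $A_0$ as the $\langle\alpha\rangle$-free part forced by $A$ and $R$ as the unique $\alpha$-residue, and verifying that each is determined by the provable-equivalence class of $A$.
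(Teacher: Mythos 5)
Your handling of items~\ref{lemma:basicLemma1} and~\ref{lemma:basicLemma2} is correct and coincides with the paper's own argument: the paper proves the first item exactly as you do (the axiom $\langle\beta\rangle B \to [\alpha]\langle\beta\rangle B$ in one direction, and $\langle\alpha\rangle\langle\beta\rangle B \to \langle\beta\rangle\langle\beta\rangle B \to \langle\beta\rangle B$ in the other), and your induction on the length of $A$ is the obvious instantiation of the paper's remark that item~\ref{lemma:basicLemma2} ``follows directly from the first'' (modulo the slip where you write $A\alpha C$ for $A\alpha B$).

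For item~\ref{lemma:basicLemma4}, however, your plan rests on a false lemma. The ``uniqueness of the $\alpha$-block decomposition up to provable equivalence'' fails. Take $\alpha=0$, $A_0=B_0=\langle1\rangle\top$, $R=\top$, $Q=\langle0\rangle\top$. From the monotonicity axiom we get $\glp\vdash\langle1\rangle\top\to\langle0\rangle\top$, and from $\langle0\rangle\top\to[1]\langle0\rangle\top$ together with $\langle1\rangle\top\wedge[1]\langle0\rangle\top\to\langle1\rangle\langle0\rangle\top\to\langle0\rangle\langle0\rangle\top$ we get $\glp\vdash\langle1\rangle\top\to\langle0\rangle\langle0\rangle\top$. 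Hence $A_0\wedge\langle0\rangle R$ and $B_0\wedge\langle0\rangle Q$ (that is, the worms $10$ and $100$) are both provably equivalent to $\langle1\rangle\top$, while $R\not\leftrightarrow Q$, since $\glp\not\vdash\langle0\rangle\top$. Worse, the worm $\langle1\rangle\top$ itself, equivalent to both, contains no occurrence of $\langle0\rangle$ at all, so even the shapes of the decompositions of provably equivalent worms need not match. Consequently your induction cannot recurse on matched tails: applied to $10\leftrightarrow100$ it would force $\top\leftrightarrow\langle0\rangle\top$. Note that the desired conclusion is nonetheless true in this example ($\glp\vdash 10\,0\,C\leftrightarrow 100\,0\,C$, the head conjunct $\langle1\rangle\top$ absorbing the discrepancy), which shows the statement is correct but your route to it is not: what is canonical about the decomposition is only the head (in the spirit of Lemma~\ref{lemma:reducingGeneralOrderToSpecialOrder}, $h_{\alpha+1}$ respects provable equivalence), not the tail.

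Your fallback, invoking the normal-form and well-ordering theory of $\langle S_\alpha,<_\alpha\rangle$, is circular in this development: in the sources this paper builds on, the congruence property of item~\ref{lemma:basicLemma4} is one of the basic tools used to \emph{establish} that every worm is equivalent to a unique worm in \BNF\ --- the reduction to normal form replaces subworms by provably equivalent ones and needs precisely this congruence to conclude that the whole worm is preserved. So the \BNF\ machinery cannot be presupposed here; the congruence lemma must be (and, in the literature the paper cites, is) proved by direct modal reasoning prior to any normal-form or well-order results.
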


\proof
The $\to$ direction of the first item follows from the axiom $\langle \beta \rangle B \to [\alpha] \langle \beta \rangle B$. For the other direction we observe that $\langle \alpha \rangle \langle \beta \rangle B \to \langle \beta \rangle B$ in virtue of axiom $\langle \alpha \rangle \langle \beta \rangle B \to \langle \beta \rangle \langle \beta \rangle B$ and transitivity of $[\beta]$. The other two items follow directly from the first.
\qed


\subsection{Plan of the paper}
After the introduction, in Section \ref{section:OrdinalArithmetic} we will revisit some standard notions from ordinal arithmetic that are needed throughout the rest of the paper.

In Section \ref{section:LinearOrdersOnWorms} we describe the linear orders $<_{\alpha}$ on $S_{\alpha}$ defined as $A<_{\alpha} B :\Leftrightarrow \glp \vdash B \to \langle \alpha \rangle A$. The function $o$ will map a worm to the order type of the set $\{ B\in S \mid B<_0 A\}$. We resume a calculus for computing $o$ as presented in \cite{FernandezJoosten:2012:WellOrders}.
An important ingredient in this calculus is the function $e^{\alpha}$ which is defined as the function that enumerates $o(S_{\alpha})$. The functions $e^\alpha$ can be seen as a transfinite iterate that we call \emph{hyperation}.

Next, in Section \ref{section:OrdersAndGeneralConsiderations} we study the order $<_{\alpha}$ on $S$ in general and not only on $S_{\alpha}$. In this case $<_{\alpha}$ no longer linearly orders $S$ but rather defines a well-founded relation. By $\Omega_{\alpha}(A)$ we will denote the supremum of order-types of linear orders that reside in $\{  B \in S \mid B <_{\alpha}A\}$. We shall see how the study of $\Omega_{\alpha}$'s can be recursively reduced to the study of $o_\xi$'s. Most of the results presented here and in the next section appeared also in \cite{FernandezJoosten:2012:TuringProgressions}.

In Section \ref{section:OmegaSequences} we shall study the sequences $\langle \Omega_{\alpha}(A)\rangle_{\alpha \in {\sf On}}$ for worms $A$ and give a full characterization of these sequences. 

We shall see that these sequences are important for two reasons. Firstly, in Section \ref{section:OrdersAndGeneralConsiderations} we see that they provide us information (lower-bounds, one could say) of what a modal model for the closed fragment of \glp should look like. In Section \ref{section:TuringRevisited} we shall see that the theory $T+A$ for $\glp_\omega$ worms $A$ can exactly be characterized in terms of its Turing progression aproxomations by $\langle \Omega_{\alpha}(A)\rangle_{\alpha \in {\sf On}}$.

The first characterization of these omega sequences that we give is of local nature. In particular, we prove a lemma that determines the nature of the omega-sequences at successor ordinals, and a different lemma for limit ordinals.

In Section \ref{section:FromLocalToGlobal} we take these two lemmata under the loupe and isolate a common feature. To smoothly express this common feature we would need a uniform way to obtain left-inverses to hyperations: which is given by the theory of what we call \emph{cohyperations}.

In Section \ref{section:Cohyperations} we summarize results from the theory of hyperations and cohyperations as presented in \cite{FernandezJoosten:2012:Hyperations}. An important theorem is obtained that characterizes so-called \emph{hyperlogarithms} which are essential in the next section. Hyperations and cohyperations were introduced by the authors in order to give a smooth global presentation of the omega sequences. 

Finally, in Section \ref{section:ComputingOmegaCoordinates} we set the cohyperations at work to obtain a global characterization of the omega sequences.


\subsection{Notation}

We reserve lower-case Greek letters $\alpha, \beta, \gamma, \ldots {\x}\ldots$ for variables ranging over ordinals. Worms will be denoted by upper case latin letters $A, B, C,  \ldots$. The Greek lower-case letters $\phi, \psi, \chi, \ldots$ will denote formulas. However, $\varphi$ shall be reserved for the Veblen enumeration function and variants thereof. Likewise, we reserve $\omega$ to denote the first infinite ordinal.


\section{Turing progressions and modal logic}

The logics \gl and $\glp_\omega$ turn out to be very well suited to talk about Turing progressions. Let us recall the definition of Turing progressions as introduced by Turing in his seminal paper \cite{Turing:1939:TuringProgressions}.

G\"odel's Second Incompleteness Theorem tells us that any sound recursive theory that is strong enough to code syntax will not prove its own consistency. Thus, adding ${\sf Con}(T)$ to such a theory $T$ will yield a strictly stronger theory. Turing took up this idea to consider recursive ordinal progressions of some recursive sound base theory $T$:
\[
\begin{array}{llll}
T_0 &:=& T;  \\
T_{\alpha +1} & :=& T_\alpha + {\sf Con}(T_\alpha); & \\
T_\lambda & := & \bigcup_{\alpha < \lambda} T_\alpha & \mbox{for limit $\lambda$.} 
\end{array}
\]
Poly-modal provability logics turn out to be suitably well equipped to talk about Turing progressions. When talking about closed formulas of \glp we shall often not distinguish a modal formula from its arithmetical interpretation.

Finite Turing progressions are definable in \gl as $T_n$ is provably equivalent to $T+ \Diamond^n_T \top$ where $\Diamond_T \phi$ stand for the arithmetic sentence ${\sf Con}(T+\phi)$.  Transfinite progressions are not expressible in the modal language with just one modal operator. However, using stronger provability predicates provides a way out (see \cite{Beklemishev:2005:Survey}). In particular, the following proposition tells us how to approximate the $\omega$'th Turing progression. For this and the following proposition there are some technical side-conditions on the theory $T$ that shall be specified in Section \ref{section:TuringRevisited}. In the current section, we are mainly interested in seeing the link between Turing progressions and polymodal provability logics.

\begin{prop}\label{theorem:reductionProperty}
$T + \langle n+1 \rangle_T \top$ is a $\Pi_{n+1}$ conservative extension of \\
$T + \{  \langle n \rangle_T ^k \top \mid k \in \omega  \}$.
\end{prop}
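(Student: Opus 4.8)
The plan is to establish the two inclusions of $\Pi_{n+1}$-consequences separately. Write $C_k := \langle n \rangle_T^k \top$, so that the right-hand theory is $U := T + \{ C_k \mid k \in \omega\}$. Each $C_k$ is a $\Pi_{n+1}$ sentence (the predicate $[n]_T$ is $\Sigma_{n+1}$, so its dual $\langle n\rangle_T$ and all finite iterates stay $\Pi_{n+1}$), and the chain is decreasing, $T \vdash C_{k+1} \to C_k$: since $C_{k+1} = \langle n\rangle_T C_k$, applying monotonicity of $\langle n\rangle_T$ to $C_k \to C_{k-1}$ yields $C_{k+1} \to C_k$, with base case $C_1 = \langle n\rangle_T\top \to \top = C_0$. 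Hence, by compactness, a $\Pi_{n+1}$ sentence is a consequence of $U$ iff it is a consequence of $T + C_k$ for some single $k$; this reduces the whole statement to a per-$k$ analysis.

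For the easy inclusion --- that $T + \langle n+1\rangle_T \top$ extends $U$, and in particular inherits its $\Pi_{n+1}$-consequences --- I would show $T + \langle n+1 \rangle_T \top \vdash C_k$ for every $k$ by external induction on $k$. The two ingredients are monotonicity, $\langle n+1 \rangle_T \psi \to \langle n \rangle_T \psi$, and provable $\Pi_{n+1}$-completeness of $[n+1]_T$, namely $T \vdash \pi \to [n+1]_T \pi$ for $\pi \in \Pi_{n+1}$ (a true $\Pi_{n+1}$ sentence is an oracle axiom of $[n+1]_T$). From completeness, $T \vdash C_k \wedge \langle n+1\rangle_T \top \to \langle n+1 \rangle_T C_k$ (if $C_k$ holds it cannot be $[n+1]_T$-refuted without collapsing consistency), and monotonicity turns $\langle n+1\rangle_T C_k$ into $\langle n \rangle_T C_k = C_{k+1}$. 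The induction then runs: $C_0 = \top$ is trivial, and $\langle n+1\rangle_T\top \to C_k$ together with the displayed implication yields $\langle n+1\rangle_T\top \to C_{k+1}$.

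The hard inclusion is conservativity: every $\Pi_{n+1}$ theorem $\pi$ of $T + \langle n+1\rangle_T\top$ is already a theorem of $U$. Dualizing, $T \vdash \langle n+1\rangle_T\top \to \pi$ reads $T \vdash \sigma \to [n+1]_T \bot$ with $\sigma := \neg \pi \in \Sigma_{n+1}$, and by the compactness reduction above it suffices to produce a single $k$ with $T \vdash \sigma \to [n]_T^k \bot$, i.e.\ $T + C_k \vdash \pi$. So the crux is the \emph{reduction lemma}: if $T \vdash \sigma \to [n+1]_T\bot$ with $\sigma$ a $\Sigma_{n+1}$ sentence, then $T \vdash \sigma \to [n]_T^k\bot$ for some $k$. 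I would prove this Schmerl-style: work in $T + \sigma$, use provable $\Sigma_{n+1}$-completeness $T \vdash \sigma \to [n]_T \sigma$ to internalize $\sigma$ at level $n$, and then unwind the hypothetical $[n+1]_T$-refutation of $\bot$ --- which invokes only finitely many true $\Pi_{n+1}$ oracle premises --- showing by a formalized reflection argument that those premises are captured by finitely many $n$-consistency iterates, so the refutation is simulated by $[n]_T^k$ for a standard $k$ read off from the given derivation.

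I expect this reduction lemma to be the main obstacle, since it is precisely where the finite bound $k$ must be extracted: one has to convert an arbitrary true-$\Pi_{n+1}$-oracle refutation into a bounded tower of ordinary $n$-consistency statements. This is also where the technical side-conditions on $T$ (soundness/reflexivity, guaranteeing the needed reflection and $\Sigma_{n+1}$-completeness principles) enter essentially; by contrast the extension direction and the compactness bookkeeping are comparatively routine.
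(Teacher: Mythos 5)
The paper itself does not prove this proposition: it is imported from Beklemishev's survey \cite{Beklemishev:2005:Survey}, with the technical side-conditions on $T$ deferred to Section \ref{section:TuringRevisited}. So your attempt must be judged on its own merits. Your bookkeeping and your easy direction are sound: the $C_k$ are indeed $\Pi_{n+1}$, the chain $C_{k+1}\to C_k$ is provable by monotonicity, compactness correctly reduces everything to a single $k$, and the induction showing $T+\langle n+1\rangle_T\top\vdash C_k$ is the standard argument. (In fact that direction is purely modal: $C_k\to[n+1]C_k$ is an instance of the \glp axiom $\langle n\rangle\psi\to[n+1]\langle n\rangle\psi$, so $\glp\vdash\langle n+1\rangle\top\to\langle n\rangle^k\top$ and arithmetical soundness does the rest, without invoking provable $\Pi_{n+1}$-completeness separately.)

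The genuine gap is in the reduction lemma, which is not a technicality to be discharged by ``unwinding the refutation''---it is the entire content of the proposition. Your sketch treats the hypothesis as if you held an $[n+1]_T$-refutation of $\bot$ with finitely many concrete true $\Pi_{n+1}$ oracle premises from which $k$ can be read off. But what you actually have is a $T$-derivation of $\sigma\to[n+1]_T\bot$, where $[n+1]_T\bot$ is a single arithmetical sentence asserting, \emph{inside} the theory, the existence of some true $\Pi_{n+1}$ sentence $\rho$ with $\Box_T(\neg\rho)$; this $\rho$ is an internally quantified (possibly nonstandard) object, and neither a finite list of oracle premises nor a bound on $k$ is visible in the derivation. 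Worse, the simulation claim is false as stated: an arbitrary true $\Pi_{n+1}$ sentence is not a consequence of $T+\langle n\rangle_T^k\top$ for any $k$ (take a true $\Pi_1$ sentence independent of all these theories), so the oracle premises cannot be ``captured by finitely many $n$-consistency iterates.'' The known proofs extract $k$ by genuinely different means: either proof-theoretically, via cut-elimination/Herbrand-style analysis of the given derivation---which is precisely where the side condition that $T$ contain $\ea^+$ (totality of superexponentiation) enters---or model-theoretically, via compactness and overspill to a nonstandard iterate $\langle n\rangle_T^c\top$ together with a formalized parametric conservation lemma. Provable $\Sigma_{n+1}$-completeness $\sigma\to[n]\sigma$, which you correctly identify, is an ingredient of those proofs but is nowhere near sufficient on its own.
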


More in general we have the following proposition (\cite{Beklemishev:2005:Survey}):

\begin{prop}\label{theorem:generalizedReductionLemma}
For each ordinal $\alpha< \epsilon_0$ there is some $\glp_\omega$-worm $A$ such that $T + A$ is $\Pi_1$ equivalent to $T_\alpha$.
\end{prop}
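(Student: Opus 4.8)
The plan is to prove the stronger and more informative statement that for \emph{every} $\glp_\omega$-worm $A$ we have that $T + A$ is $\Pi_1$-equivalent to $T_{o(A)}$, where $o = o_0$ is the order-type function on worms recalled in Section~\ref{section:LinearOrdersOnWorms}. The proposition then follows immediately: since $o$ maps the class of $\glp_\omega$-worms \emph{onto} the ordinals below $\epsilon_0$ and is injective modulo $\glp$-provable equivalence, for a given $\alpha < \epsilon_0$ we simply pick any worm $A$ with $o(A) = \alpha$. The injectivity of $o$ modulo provable equivalence will also be used in the induction, since it lets us replace an arbitrary worm by a canonical representative of the same order type without changing the associated arithmetical theory.

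I would prove the main claim by transfinite induction on $o(A)$, distinguishing the three usual cases. For $o(A) = 0$ the worm is $\glp$-equivalent to $\top$ and there is nothing to show, as $T + \top = T = T_0$. For the successor case $o(A) = \beta + 1$, the $o$-calculus together with injectivity of $o$ lets me assume $A$ is provably equivalent to $\langle 0 \rangle_T B$ with $o(B) = \beta$, so that $T + A$ is the arithmetical theory $T + {\sf Con}(T + B)$. The induction hypothesis gives that $T + B$ is $\Pi_1$-equivalent to $T_\beta$, and---this being the delicate point---this equivalence can be obtained \emph{provably}, so that ${\sf Con}(T+B) \leftrightarrow {\sf Con}(T_\beta)$ becomes available; hence $T + A$ is $\Pi_1$-equivalent to $T_\beta + {\sf Con}(T_\beta) = T_{\beta+1}$.

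The limit case $o(A) = \lambda$ is where the reduction property does its work. Here $A$ contains no modality $0$ (otherwise its order type would be a successor), so up to provable equivalence $A$ lies in $S_1$ and has least modality $n+1 \geq 1$. Peeling off this modality and invoking Proposition~\ref{theorem:reductionProperty} (in the form spelled out in \cite{Beklemishev:2005:Survey}) expresses $T + A$ as a $\Pi_{n+1}$-conservative, hence in particular $\Pi_1$-conservative, extension of an infinite union $T + \{ A_k \mid k \in \omega\}$ of theories attached to worms $A_k$ of strictly lower modality. By the induction hypothesis each $T + A_k$ is $\Pi_1$-equivalent to $T_{o(A_k)}$, and matching the union against $T_\lambda = \bigcup_{\alpha < \lambda} T_\alpha$ reduces to verifying that the order types $o(A_k)$ are cofinal in $\lambda$, which I would read off from the continuity and normality of the hyperation $e^{n+1}$ enumerating $o(S_{n+1})$.

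I expect the main obstacle to be twofold, and to sit away from the combinatorics. First, in the limit case one must confirm that the approximations $A_k$ supplied by the reduction property genuinely have order types converging to $\lambda$ from below; this rests on the normal-function behaviour of the hyperations recalled in Section~\ref{section:LinearOrdersOnWorms} rather than on anything about $T$. Second, throughout the successor step the various $\Pi_1$-equivalences must be secured \emph{provably} in the base theory---this is precisely where the technical side-conditions on $T$ alluded to above enter---for otherwise one could not legitimately replace ${\sf Con}(T+B)$ by ${\sf Con}(T_\beta)$ and the induction would fail to close.
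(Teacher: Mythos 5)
First, a point of reference: the paper does not prove this proposition at all---it is imported from Beklemishev's survey \cite{Beklemishev:2005:Survey}---so your attempt is measured against the standard argument, which you are clearly reconstructing. Your base and successor cases are sound in outline: if $o(A)=\beta+1$ then the \BNF of $A$ does begin with $\langle 0\rangle$, so $A\equiv \langle 0\rangle B$ with $o(B)=\beta$, and you correctly identify that the induction only closes if the $\Pi_1$-equivalences are \emph{provable} in the base theory (so that ${\sf Con}(T+B)\leftrightarrow{\sf Con}(T_\beta)$ is available); this is precisely why the side-conditions on $T$ mentioned before Proposition \ref{theorem:reductionProperty} are needed, and why Beklemishev formalizes the reduction property.

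The genuine gap is in your limit case: the claim that a worm of limit order type ``contains no modality $0$'' is false. The correct dichotomy reads off the \emph{leftmost} symbol of the \BNF, not the occurrence of $0$ anywhere: by the calculus of Theorem \ref{theorem:OrderTypeCalculus}, for $A=A_n0\ldots0A_1\in\B$ one has $o(A)=\omega^{o(1\downarrow A_1)}+\ldots+\omega^{o(1\downarrow A_n)}$, and this is a successor exactly when the leftmost block $A_n$ is empty. For example $A=\langle 1\rangle\langle 0\rangle\langle 1\rangle\top$ has $o(A)=\omega+\omega=\omega\cdot 2$, a limit, yet contains $\langle 0\rangle$; such worms are covered by neither of your cases, since they are not equivalent to any $\langle 0\rangle B$ nor do they lie in $S_1$. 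The repair requires exactly the head/remainder machinery of Section \ref{section:TuringRevisited}: split $A\leftrightarrow h_1(A)\wedge r_1(A)$, absorb the ``small'' remainder $r_1(A)$ into the base theory, and use that Turing progressions are provably insensitive to such $\Pi_1$ additions to the base (Lemma \ref{theorem:GeneralizedTuringProgressionsInvariantToSmallChangesBaseTheory}, and in the generalized setting Lemma \ref{theorem:addingSmallWormsToT+A} and Corollary \ref{theorem:T+AIsAlmostATuringProgression}); only then does your reduction-property-plus-cofinality argument apply to the head, which genuinely lies in $S_1$. Your cofinality check $\sup_k o(\langle n\rangle^k B)=o(\langle n+1\rangle B)$ via normality of the hyperexponentials is the right ingredient, but as written it is invoked for a case analysis that does not exhaust all worms.
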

To get generalizations of this lemma beyond $\epsilon_0$ one should consider more than $\omega$ modalities. Before doing so, in the next section we first provide some more background on the ordinals that we shall need later on in this paper. In the final section, Section \ref{section:TuringRevisited}, we shall see how the omega sequences can be interpreted in terms of Turing progressions.


\section{Ordinal arithmetic}\label{section:OrdinalArithmetic}


In this section we shall briefly state without proof the main properties of ordinals that we need in the remainder of this paper.
For further definitions and detailed proofs, we refer the reader to \cite{Pohlers:2009:PTBook}. Ordinals are canonical representatives for well-orders. The first infinite ordinal is as always denoted by $\omega$. 

Most operations on natural numbers can be extended to ordinal numbers, like addition, multiplication and exponentiation (see \cite{Pohlers:2009:PTBook}). 

\begin{lemma}\label{theorem:BasicPropertiesOrdinalArithmetic}
\

\begin{enumerate}
\item\  $\forall \, \zeta {<} \xi \, \exists ! \eta \ \zeta + \eta = \xi$\\
(We will denote this unique $\eta$ by $-\zeta + \xi$),

\item
$\forall \eta>0 \, \exists \alpha \, \exists !  \beta \ \eta = \alpha + \omega^{\beta}$\\
(We will denote this unique $\beta$ by $\le \eta$),

\item
$\forall \eta>0 \, \exists ! \, \alpha, \beta \ \eta = \omega^{\alpha} + {\beta}$ such that $\beta < \omega^{\alpha}+\beta$.
\end{enumerate}
\end{lemma}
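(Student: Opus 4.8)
The plan is to derive all three items from two standard pillars of ordinal arithmetic: the fact that for fixed $\zeta$ the map $\eta \mapsto \zeta + \eta$ is a normal (strictly increasing and continuous) function, and the existence and uniqueness of Cantor normal forms. Since Section \ref{section:OrdinalArithmetic} explicitly defers detailed proofs to \cite{Pohlers:2009:PTBook}, I would keep the argument at the level of a sketch that reduces each claim to these two pillars.

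For the first item I would begin by noting that $\eta \mapsto \zeta + \eta$ is strictly increasing, so any $\eta$ with $\zeta + \eta = \xi$ is unique; this disposes of uniqueness immediately. For existence, the strictly increasing continuous map $\eta \mapsto \zeta + \eta$ sends $0$ to $\zeta$ and is an order isomorphism from ${\sf On}$ onto the final segment $\{\gamma : \gamma \geq \zeta\}$. Since $\zeta < \xi$ places $\xi$ in this final segment, some $\eta$ is sent to $\xi$; concretely one may take $\eta$ to be the order type of the interval $\{\gamma : \zeta \leq \gamma < \xi\}$.

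For the remaining two items the engine is the Cantor normal form $\eta = \omega^{\gamma_1} c_1 + \cdots + \omega^{\gamma_k} c_k$ with $\gamma_1 > \cdots > \gamma_k$ and $0 < c_i < \omega$, which for $\eta > 0$ exists and is unique. In the second item I would read off $\beta := \gamma_k$, the least exponent, and set $\alpha$ to be $\eta$ with one trailing copy of $\omega^{\gamma_k}$ removed, so that $\eta = \alpha + \omega^\beta$. Uniqueness of $\beta$ then follows because the absorption law for addition shows that $\alpha + \omega^\beta$ always has least Cantor exponent exactly $\beta$; hence any representation $\eta = \alpha + \omega^\beta$ forces $\beta$ to equal the least exponent of $\eta$, which is unique. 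In the third item I would instead take $\alpha := \gamma_1$, the greatest exponent, and $\beta := \eta$ with one leading copy of $\omega^{\gamma_1}$ removed; the side condition $\beta < \omega^\alpha + \beta$ is exactly the statement that the leading exponent of $\beta$ is at most $\alpha$ (or $\beta = 0$), which holds by construction, and it guarantees that $\alpha$ is the greatest exponent of $\eta$. Uniqueness of $\alpha$ follows from uniqueness of this greatest exponent, and then $\beta = -\omega^\alpha + \eta$ is unique by the first item.

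The step I expect to require the most care is the bookkeeping of the absorption law: precisely which terms of $\alpha$ survive when $\omega^\beta$ is added on the right, and the verification that $\omega^\alpha + \beta > \beta$ is equivalent to the leading exponent of $\beta$ being at most $\alpha$. These facts are elementary, but it is easy to state them sloppily, and getting the exact threshold right is what pins down the side condition in the third item and the uniqueness in the second.
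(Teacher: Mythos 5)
Your sketch is correct, but there is nothing in the paper to compare it against: Section \ref{section:OrdinalArithmetic} explicitly states this lemma \emph{without} proof, deferring all details to Pohlers' book, so you are supplying an argument where the authors supply only a citation. Your two pillars do carry all three items. For item 1, uniqueness from strict monotonicity of $\eta \mapsto \zeta + \eta$ is standard, and your concrete witness --- $\eta$ as the order type of $\{\gamma : \zeta \leq \gamma < \xi\}$ --- is exactly right; note that surjectivity onto the final segment is special to addition (it increments by exactly $1$ at successors), so the concrete witness is safer than a bare appeal to normality, and you wisely give it. For item 2, your absorption claim is accurate: if $\alpha = \omega^{\delta_1}d_1 + \cdots + \omega^{\delta_m}d_m$ in Cantor normal form, then adding $\omega^\beta$ on the right erases precisely the trailing terms with $\delta_i < \beta$, so the least exponent of $\alpha + \omega^\beta$ is always $\beta$; this pins down $\beta$ as the least exponent of $\eta$ and correctly explains why the lemma asserts only $\exists \alpha\, \exists!\beta$ (the $\alpha$ is genuinely non-unique, e.g.\ $\omega = 0 + \omega = 1 + \omega$). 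For item 3, your reading of the side condition is the right threshold: $\beta < \omega^\alpha + \beta$ holds exactly when $\beta = 0$ or the leading exponent of $\beta$ is at most $\alpha$, since left absorption $\omega^\alpha + \beta = \beta$ occurs precisely when that leading exponent exceeds $\alpha$; this forces $\alpha$ to be the greatest exponent of $\eta$, hence unique, after which uniqueness of $\beta$ follows from left cancellation (strict monotonicity of $\beta \mapsto \omega^\alpha + \beta$, which also covers the boundary case $\beta = 0$ that a literal appeal to item 1 with $\zeta < \xi$ would miss). These are exactly the standard arguments one finds in the cited reference, so your proof fills the gap faithfully.
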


One of the most useful ways to represent ordinals is through their Cantor Normal Forms (CNFs):

\begin{theorem}[Cantor Normal Form Theorem] \ \\
For each ordinal $\alpha$ there are unique ordinals $\alpha_1\geq \ldots \geq \alpha_n$ such that 
\[
\alpha = \omega^{\alpha_1} + \ldots + \omega^{\alpha_n}.
\] 
\end{theorem}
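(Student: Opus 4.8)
The plan is to prove the Cantor Normal Form Theorem by establishing existence first and then uniqueness, both by transfinite induction on $\alpha$. The key tool is item (3) of Lemma \ref{theorem:BasicPropertiesOrdinalArithmetic}, which for any $\eta > 0$ gives a unique decomposition $\eta = \omega^\alpha + \beta$ with $\beta < \omega^\alpha + \beta$; this is precisely the extraction of the leading term of the normal form.

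\medskip

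For \textbf{existence}, I would argue by transfinite induction. The case $\alpha = 0$ is the empty sum ($n=0$), so assume $\alpha > 0$ and that every smaller ordinal admits such a representation. By Lemma \ref{theorem:BasicPropertiesOrdinalArithmetic}(3) write $\alpha = \omega^{\alpha_1} + \beta$ with $\beta < \omega^{\alpha_1} + \beta = \alpha$. Since $\beta < \alpha$, the induction hypothesis gives $\beta = \omega^{\gamma_1} + \ldots + \omega^{\gamma_m}$ with $\gamma_1 \geq \ldots \geq \gamma_m$. Splicing these together yields $\alpha = \omega^{\alpha_1} + \omega^{\gamma_1} + \ldots + \omega^{\gamma_m}$, and it remains to check that $\alpha_1 \geq \gamma_1$ so the exponents are weakly decreasing. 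This is where the side condition $\beta < \omega^{\alpha_1}$ does the work: if we had $\gamma_1 > \alpha_1$, then $\omega^{\gamma_1} \geq \omega^{\alpha_1 + 1} = \omega^{\alpha_1}\cdot\omega > \omega^{\alpha_1}$, forcing $\beta \geq \omega^{\gamma_1} > \omega^{\alpha_1}$ and contradicting $\beta < \omega^{\alpha_1}$ (which follows from the stated condition $\beta < \omega^{\alpha_1} + \beta$ by absorption). Hence $\gamma_1 \leq \alpha_1$ and the concatenation is a valid CNF.

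\medskip

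For \textbf{uniqueness}, I would again induct on $\alpha$. Suppose
\[
\omega^{\alpha_1} + \ldots + \omega^{\alpha_n} = \omega^{\delta_1} + \ldots + \omega^{\delta_k}
\]
with both exponent sequences weakly decreasing. The crucial observation is that, because the exponents are nonincreasing, the tail $\omega^{\alpha_2} + \ldots + \omega^{\alpha_n}$ is strictly below $\omega^{\alpha_1}$ (an absorption estimate: a decreasing sum of powers below $\omega^{\alpha_1}$ stays below $\omega^{\alpha_1}$), and similarly for the $\delta$-side. Thus both sides exhibit $\alpha$ in the form of Lemma \ref{theorem:BasicPropertiesOrdinalArithmetic}(3), so by the uniqueness clause there we get $\omega^{\alpha_1} = \omega^{\delta_1}$ (hence $\alpha_1 = \delta_1$ by strict monotonicity of $\xi \mapsto \omega^\xi$) and equal remainders. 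Cancelling the common leading term via Lemma \ref{theorem:BasicPropertiesOrdinalArithmetic}(1) identifies the two tails, which represent an ordinal strictly smaller than $\alpha$, so the induction hypothesis finishes the argument.

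\medskip

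The main obstacle I anticipate is the \emph{absorption lemma} underlying both halves: the fact that a weakly decreasing finite sum $\omega^{\alpha_2} + \ldots + \omega^{\alpha_n}$ with all $\alpha_i \leq \alpha_1$ remains strictly less than $\omega^{\alpha_1}$. This requires the estimate $\omega^{\xi} + \omega^{\xi} = \omega^{\xi}\cdot 2 < \omega^{\xi+1} \leq \omega^{\alpha_1}$ when $\xi < \alpha_1$, iterated finitely often, together with the left-additive-principal character of the ordinals $\omega^{\alpha_1}$. Once this absorption fact is in hand, matching each side against the normal form of Lemma \ref{theorem:BasicPropertiesOrdinalArithmetic}(3) makes both existence and uniqueness essentially mechanical; the real content is ensuring the decomposition from item (3) lines up exactly with the leading term of the CNF.
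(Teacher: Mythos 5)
There is a genuine gap, and it occurs in both halves of your argument, stemming from a single misreading of the side condition in Lemma \ref{theorem:BasicPropertiesOrdinalArithmetic}(3). That condition, $\beta < \omega^{\alpha_1} + \beta$, says only that $\omega^{\alpha_1}$ is \emph{not absorbed} by $\beta$; it is equivalent to $\beta < \omega^{\alpha_1 + 1}$, not to $\beta < \omega^{\alpha_1}$. Concretely, for $\alpha = \omega\cdot 2 = \omega + \omega$ the decomposition of item (3) is $\alpha_1 = 1$, $\beta = \omega$, and indeed $\beta < \omega + \beta$, yet $\beta < \omega^{\alpha_1} = \omega$ fails. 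This falsifies your parenthetical in the existence half (``$\beta < \omega^{\alpha_1}$ \ldots follows from the stated condition \ldots by absorption''), and it falsifies outright the ``absorption lemma'' you flag as the crux of the uniqueness half: a weakly decreasing sum $\omega^{\alpha_2} + \ldots + \omega^{\alpha_n}$ with all $\alpha_i \leq \alpha_1$ need \emph{not} be strictly below $\omega^{\alpha_1}$, since the leading exponent may repeat ($\omega^{\alpha_1} + \omega^{\alpha_1}$ has tail exactly $\omega^{\alpha_1}$). Your own supporting estimate, $\omega^{\xi}\cdot 2 < \omega^{\xi+1} \leq \omega^{\alpha_1}$, silently assumes $\xi < \alpha_1$ strictly, which CNF does not guarantee. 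As written, the uniqueness induction cannot even begin for ordinals as simple as $\omega\cdot 2$, because neither representation matches the (misstated) form you try to feed into the uniqueness clause of item (3).

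The repair is local and your overall strategy (transfinite induction plus leading-term extraction) is sound. In existence, drop the false intermediate claim: if $\gamma_1 > \alpha_1$, then $\beta \geq \omega^{\gamma_1} \geq \omega^{\alpha_1+1}$, whence $\omega^{\alpha_1} + \beta = \beta$, directly contradicting $\beta < \omega^{\alpha_1} + \beta$. In uniqueness, the correct matching condition is that the tail satisfies the lemma's actual hypothesis: $\omega^{\alpha_2} + \ldots + \omega^{\alpha_n} \leq \omega^{\alpha_1}\cdot(n-1) < \omega^{\alpha_1}\cdot\omega = \omega^{\alpha_1+1}$, so the tail does not absorb $\omega^{\alpha_1}$; then the uniqueness clause of Lemma \ref{theorem:BasicPropertiesOrdinalArithmetic}(3) yields $\alpha_1 = \delta_1$, left cancellation via item (1) identifies the tails, and the tails are smaller than $\alpha$ precisely because of the non-absorption condition, so your induction closes. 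For calibration: the paper itself states this theorem without proof, deferring to the literature, so there is no in-paper argument to compare against; judged on its own, your attempt has the right skeleton but the key absorption claim must be weakened from $\beta < \omega^{\alpha_1}$ to $\beta < \omega^{\alpha_1+1}$ throughout.
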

We call a function $f$ \emph{increasing} if $\alpha < \beta$ implies $f(\alpha) < f(\beta)$. An ordinal function is called \emph{continuous} if $\bigcup_{\zeta<\xi}f(\zeta) = f(\xi)$ for all limit ordinals $\xi$. Functions which are both increasing and continuous are called \emph{normal}. 

It is not hard to see that each normal function has an unbounded set of fixpoints. For example the first fixpoint of the function $\varphi_0: x\mapsto \omega^x$ is
\[\sup \{  \omega, \omega^{\omega},\omega^{\omega^{\omega}}, \ldots \}\]
and is denoted $\varepsilon_0$. Clearly for these fixpoints, CNFs give little information as, for example, $\varepsilon_0 = \omega^{\varepsilon_0}$. Therefore, we shall need notations and normal forms that are slightly more informative and which are based on functions that enumerate the fixpoints of normal functions: Veblen Normal Forms (VNFs).

In his seminal paper \cite{Veblen:1908}, Veblen considered for each normal function $f$ its derivative $f'$ that enumerates the fixpoints of $f$. 
%
If $f$ is a normal function, then the image of $f$ --which we shall denote by $F$-- is a closed (under taking supremata) unbounded set. Likewise the function that enumerates a closed unbounded set is continuous.  
For $f$ a normal function, we define $F'$ to be the image of $f'$ and we extend this transfinitely by setting
\[
\begin{array}{llll}
F_{\alpha+1} &:=&(F_{\alpha})';& \\
F_{\lambda} &:=& \displaystyle\bigcap_{\alpha<\lambda} F_{\alpha}& \mbox{ for limit $\lambda$},
\end{array}
\]
then taking $f_\lambda$ to be the function that enumerates $F_{\lambda}$.

By taking $\Phi_0 := \{ \omega^{\alpha} \mid {\alpha}\in {\sf On}\}$ one obtains Veblen's original hierarchy and the $\varphi_{\alpha}$ denote the corresponding enumeration functions of the classes $\Phi_{\alpha}$. 

Beklemishev noted in \cite{Beklemishev:2005:VeblenInGLP} that in the setting of \glp it is desirable to have $1 \notin \Phi_0$. Thus he considered the progression that started with $\Phi_0^B :=  \{ \omega^{1+ \alpha} \mid {\alpha}\in {\sf On}\}$. We denote the corresponding enumeration functions by $\hat \varphi_{\alpha}$. 

In \cite{FernandezJoosten:2012:Hyperations} and in this paper the authors realized that, moreover it is desirable to have $0$ in the initial set, whence we departed from 
\[
{E}_0 = \{0\} \cup \{ \omega^{1+ \alpha} \mid {\alpha}\in {\sf On}\}.
\]
We shall denote the corresponding enumeration functions by $e_{\alpha}$. In general, if $f$ is some normal function, we shall denote by $f_\alpha$ the Veblen progression based on $f_0 =f$. Note that, if $\alpha<\beta$, we have that $f_\beta(\gamma)$ is always a fixpoint of $f_\alpha$, i.e., $f_\beta=f_\alpha\circ f_\beta$.

One readily observes that
\[
\begin{array}{rcccll}
e_{\alpha}(0)&=&0&&&\text{for all $\alpha$;}\\
e_0(1+\beta)&=&\varphi_0(1+\beta)& =& \hat\varphi_0(\beta)&\text{for all $\beta$;}\\
e_{1+\alpha}(1+\beta)&=&\varphi_{1+\alpha}(\beta) &=& \hat\varphi_{1+\alpha}(\beta)&\text{for all $\alpha,\beta$.}\\
\end{array}
\]

Many times, we can write an ordinal $\omega^{\alpha}$ in more than one way as $\varphi_{\xi}(\eta)$. However, if we require that $\eta < \varphi_{\xi}(\eta)$, then both $\xi$ and $\eta$ are uniquely determined. In other words
\[
\forall \alpha \, \exists !\, \eta,\xi\  [\omega^{\alpha} = \varphi_{\xi}(\eta) \ \wedge \ \eta < \varphi_{\xi}(\eta)].
\]
Combining this fact with the CNF Theorem one obtains {\em Veblen Normal Forms} for ordinals.
\begin{theorem}[Veblen Normal Form Theorem]
For all $\alpha$ there exist unique $\alpha_1, \beta_1, \ldots ,\alpha_n,\beta_n$ ($n\geq 0$) such that 
\begin{enumerate}
\item
$\alpha = \varphi_{\alpha_1}(\beta_1) + \ldots + \varphi_{\alpha_n}(\beta_n)$,

\item
$\varphi_{\alpha_i}(\beta_i) \geq \varphi_{\alpha_{i+1}}(\beta_{i+1})$ for $i<n$,

\item
$\beta_i < \varphi_{\alpha_i}(\beta_i)$ for $i\leq n$.
\end{enumerate}
\end{theorem}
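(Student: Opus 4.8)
The plan is to derive the theorem from the two facts recorded immediately above its statement: the Cantor Normal Form (CNF) Theorem, and the uniqueness of the representation $\omega^{\gamma} = \varphi_{\xi}(\eta)$ subject to the side condition $\eta < \varphi_{\xi}(\eta)$. The bridge between the two is the observation that every value of every $\varphi_{\xi}$ is additively indecomposable, i.e.\ lies in $\Phi_0$ and is thus a genuine power of $\omega$. I would isolate this as a preliminary lemma, proved by transfinite induction on $\xi$: each element of $\Phi_{\alpha+1}$ is a fixpoint of $\varphi_\alpha$, and a fixpoint $\beta = \varphi_\alpha(\beta)$ belongs to $\Phi_\alpha$, so $\Phi_{\alpha+1} \subseteq \Phi_\alpha$; at limits $\Phi_\lambda = \bigcap_{\alpha<\lambda}\Phi_\alpha$, whence $\Phi_\xi \subseteq \Phi_0$ for all $\xi$. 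Consequently the range of $\varphi_\xi$ is contained in $\Phi_0$ for every $\xi$.

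For \emph{existence}, given $\alpha$ I would first apply the CNF Theorem to write $\alpha = \omega^{\gamma_1} + \ldots + \omega^{\gamma_n}$ with $\gamma_1 \geq \ldots \geq \gamma_n$, where $n=0$ precisely when $\alpha = 0$. To each summand $\omega^{\gamma_i}$ I apply the uniqueness fact to obtain $\alpha_i,\beta_i$ with $\omega^{\gamma_i} = \varphi_{\alpha_i}(\beta_i)$ and $\beta_i < \varphi_{\alpha_i}(\beta_i)$. Substituting yields condition (1), and the side condition is exactly condition (3). Condition (2) then follows from monotonicity of $x \mapsto \omega^x$: since $\gamma_i \geq \gamma_{i+1}$ we get $\varphi_{\alpha_i}(\beta_i) = \omega^{\gamma_i} \geq \omega^{\gamma_{i+1}} = \varphi_{\alpha_{i+1}}(\beta_{i+1})$.

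For \emph{uniqueness}, suppose we are given any tuple satisfying (1)--(3). By the preliminary lemma each $\varphi_{\alpha_i}(\beta_i)$ is of the form $\omega^{\gamma_i}$ for some $\gamma_i$, and condition (2) then reads $\omega^{\gamma_1} \geq \ldots \geq \omega^{\gamma_n}$, i.e.\ $\gamma_1 \geq \ldots \geq \gamma_n$. Thus (1) exhibits a CNF of $\alpha$, so by the uniqueness clause of the CNF Theorem the sequence of exponents $\gamma_i$, and hence the values $\omega^{\gamma_i} = \varphi_{\alpha_i}(\beta_i)$, are uniquely determined by $\alpha$. Finally, applying the uniqueness fact for the single-term representation $\omega^{\gamma_i} = \varphi_{\alpha_i}(\beta_i)$ under $\beta_i < \varphi_{\alpha_i}(\beta_i)$ pins down each pair $(\alpha_i,\beta_i)$, completing the uniqueness.

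The step I expect to be the crux is the preliminary lemma that $\mathrm{range}(\varphi_\xi) \subseteq \Phi_0$ for all $\xi$, since it is what licenses translating a Veblen-form decomposition back into a CNF and is not stated explicitly in the excerpt; everything else is a routine combination of the two quoted facts. One should also take a little care with the boundary case $n=0$ (the empty decomposition of $0$) so that both directions handle it uniformly.
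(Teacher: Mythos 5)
Your proposal is correct and follows exactly the route the paper intends: the paper states the theorem without proof, remarking only that it is obtained by ``combining'' the uniqueness fact $\forall \alpha \, \exists !\, \eta,\xi\ [\omega^{\alpha} = \varphi_{\xi}(\eta) \wedge \eta < \varphi_{\xi}(\eta)]$ with the Cantor Normal Form Theorem, which is precisely your argument. Your preliminary lemma that $\Phi_\xi \subseteq \Phi_0$ for all $\xi$ (so every $\varphi_{\alpha_i}(\beta_i)$ is a power of $\omega$) is indeed the detail needed to make the sketched combination rigorous, and your transfinite induction for it is sound.
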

\noindent
Note that $\alpha_i\geq\alpha_{i+1}$ does not in general hold in the VNF of $\alpha$. For example,
\[
\omega^{\varepsilon_0+1} + \varepsilon_0 = \varphi_0 (\epsilon_0 +1) + \varphi_1(0) = \varphi_0(\varphi_{\varphi_0(0)}(0)+\varphi_0(0)) + \varphi_{\varphi_0(0)}(0).
\]


\section{Linear orders on the Japaridze algebra}\label{section:LinearOrdersOnWorms}

In this section we shall introduce linear orders on worms, an important theme in our paper.

\subsection{\texorpdfstring{The orderings $<_\alpha$}{The orderings}}

It is known that the class of worms is modulo provable equivalence linearly ordered by consistency strength. That is, two worms are either equivalent or one of the two implies the consistency (0-consistency that is) of the other.

\begin{defi}[$<, <_{\alpha}, o, o_{\alpha}$]
We define a relation $<_{\alpha}$ on $S_{\alpha} \times S_{\alpha}$ by 
\[
A <_{\alpha} B \ :\Leftrightarrow \ \glp \vdash B \to \langle \alpha \rangle A \ \ \ \ \ (\mbox{with $A, B \in S_{\alpha}$}).
\]
For $A \in S_{\alpha}$ we denote by $o_{\alpha}(A)$ the order type of $\{ B \in S_{\alpha} \mid B <_{\alpha} A\}$. More precisely, for $A \in S_{\alpha}$ we define inductively
\[o_\alpha(A)=\sup\cbra o_{\alpha}(B)+1:B\in S_\alpha \ \& \ B<_\alpha A\cket,\]
where $\sup\varnothing=0$.

When $X$ is a set or class we shall denote by $o_{\alpha}(X)$ the image of $X$ under $o_{\alpha}$.
\end{defi}

Instead of $<_0$ and $o_0$ we shall write $<$ and $o$, respectively. In \cite{FernandezJoosten:2012:WellOrders} we described Japaridze algebras and how these algebras are the environments where one most naturally considers our orderings. 

As mentioned before, the relations $<_0$ defines total ordering on $S_0$ modulo provable equivalence. In the following subsection we see how we can choose natural representatives from the equivalences classes by switching to what we call \emph{Beklemishev Normal Forms}.

\subsection{A well-order on Beklemshev Normal Forms}

{\BNF}s are a subclass of $S$ on which $<_0$ does define a linear order as was shown in \cite{BeklemishevFernandezJoosten:2012:LinearlyOrderedGLP,Beklemishev:2005:VeblenInGLP}. In those papers it was also shown that each worm is equivalent to a unique worm in {\BNF} and that this {\BNF} can be found effectively for recursive well-orders. Moreover, if $A\in S_{\alpha}$, then its equivalent in {\BNF} is also in $S_{\alpha}$.

In this section we shall provide a calculus to compute $o_{\alpha}$. Note that it is not at all obvious that $o_\alpha$ is defined everywhere, but this turns out to be the case.

\begin{defi}[Beklemishev Normal Form]\label{defbnf}
A worm $A \in S$ is in {\BNF} (Beklemishev Normal Form) iff
\begin{enumerate}
\item
$A = \lambda$ \ \ \ \ \ \ \ \ \  or,

\item
$A$ is of the form $A_k\alpha \ldots \alpha A_1$ with $\alpha = \min(A)$, $k\geq1$ and $A_i \in S_{\alpha+1}$ such that each $A_i$ is in {\BNF} and moreover 
$A_{i+1} \leq_{\alpha+1}A_{i}$
for each $i<k$.
\end{enumerate}
We shall write \B \ for \BNF and $\B_\alpha$ for $\BNF \cap S_\alpha$.
\end{defi}

\begin{lemma}\label{theorem:WidthOneIsBNF}
Each worm of the form $\alpha^n$, i.e.,  $\overbrace{\langle \alpha \rangle \ldots \langle \alpha \rangle}^{\mbox{$n$ times}} \top$, is in {\BNF}.
\end{lemma}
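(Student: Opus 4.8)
The plan is to argue directly from Definition \ref{defbnf} by exhibiting the required decomposition, splitting into the trivial case $n=0$ and the main case $n\geq 1$. First I would dispose of $n=0$: here $\alpha^0=\top=\lambda$, which is in \BNF immediately by the first clause. For $n\geq 1$ I would observe that the only modality occurring in $\alpha^n$ is $\alpha$ itself, so that $\min(\alpha^n)=\alpha$, and the natural thing to do is to split the string $\alpha^n$ at each of its $n$ occurrences of this minimal modality.

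The key conceptual point is that the blocks of such a split are \emph{forced} to be empty. Indeed, in the second clause each block $A_i$ must lie in $S_{\alpha+1}$ and hence contains no modality $\leq\alpha$; since $\alpha^n$ contains nothing but $\alpha$'s, every block between consecutive $\alpha$'s must be the empty worm $\lambda$. Concretely, I claim that as strings
\[
\alpha^n=\underbrace{\lambda\,\alpha\,\lambda\,\alpha\cdots\alpha\,\lambda}_{n+1\text{ copies of }\lambda},
\]
so that $\alpha^n$ has exactly the shape $A_k\alpha\ldots\alpha A_1$ of the second clause with $k=n+1$ blocks $A_1=\cdots=A_{n+1}=\lambda$ separated by $n$ copies of $\alpha$.

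It then remains only to verify the side conditions for this decomposition. Each block $A_i=\lambda=\top$ lies in $S_{\alpha+1}$ (since $\top\in S_\beta$ for every $\beta$) and is itself in \BNF by the first clause, and $k=n+1\geq 1$. The comparison requirement $A_{i+1}\leq_{\alpha+1}A_i$ reduces in every instance to $\lambda\leq_{\alpha+1}\lambda$, which holds because $\leq_{\alpha+1}$ is reflexive ($\lambda$ being provably equivalent to itself). Hence all clauses of Definition \ref{defbnf} are met and $\alpha^n$ is in \BNF.

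The only point requiring any care — the closest thing to an obstacle — is the bookkeeping of the split: one must check that cutting a string of $n$ identical symbols at every symbol yields $n+1$ empty blocks and $n$ separators, giving $k=n+1$ rather than $k=n$, and that the reflexive case $\lambda\leq_{\alpha+1}\lambda$ of the weak comparison is genuinely admitted. Alternatively one could run a routine induction on $n$, passing from $\alpha^n$ to $\alpha^{n+1}=\langle\alpha\rangle\alpha^n$ by prepending one further empty block, but the direct decomposition above seems cleanest.
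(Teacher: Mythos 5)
Your proof is correct and is essentially the paper's own argument: the paper's one-line proof is precisely to conceive $\alpha^n$ as $\lambda\alpha\lambda\ldots\lambda\alpha\lambda$, i.e.\ the decomposition into $n+1$ empty blocks separated by $n$ copies of $\alpha$, with the side conditions of Definition~\ref{defbnf} ($\lambda\in S_{\alpha+1}$, $\lambda\in\BNF$, $\lambda\leq_{\alpha+1}\lambda$) holding trivially. You merely spell out the bookkeeping the paper leaves implicit, which is fine.
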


\proof
This is immediate if we conceive $\alpha^n$ as $\lambda\alpha\lambda\ldots\lambda\alpha\lambda$.
\qed

As announced before, the \BNF{s} form a class of natural representatives for formulas without variables with respect to $o$:

\begin{lemma}\label{theorem:oDefinesAnIsomorphism}
The map $o: \ (\B,<_0) \to (\mathsf{Ord},<)$ defines an isomorphism. 
\end{lemma}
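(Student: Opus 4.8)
The plan is to verify that $o$ is an order-preserving bijection from $(\B,<_0)$ onto $(\mathsf{Ord},<)$, using as given inputs the two facts established in the cited work: that $<_0$ is \emph{well-founded} on $S_0=S$, and that it \emph{linearly} orders $\B$ (with each worm provably equivalent to a unique \BNF\ worm). Well-foundedness is what makes the recursion $o(A)=\sup\{o(B)+1: B\in S_0,\ B<_0 A\}$ legitimate, so that $o$ is total and each value is a genuine ordinal; this is the content of the remark that ``$o_\alpha$ is defined everywhere.'' Note also that $o(A)$ depends only on the provable-equivalence class of $A$, so the supremum ranging over $S_0$ may equivalently be taken over \BNF\ representatives; I will use this to pass freely between $S_0$ and $\B$.

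First I would record order-preservation, which is immediate from the definition: if $A,B\in\B$ and $A<_0 B$, then $A$ occurs among the $B'$ with $B'<_0 B$, so $o(A)+1\leq o(B)$ and hence $o(A)<o(B)$. Since $<_0$ is linear on $\B$ and $<$ is linear on $\mathsf{Ord}$, this at once yields injectivity (distinct \BNF\ worms are $<_0$-comparable, so have distinct $o$-values) and order-reflection (if $o(A)<o(B)$ then $B<_0 A$ is impossible by preservation and $A=B$ is impossible, so $A<_0 B$). Thus $o$ is an order-embedding of $(\B,<_0)$ into $(\mathsf{Ord},<)$.

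The main work is surjectivity, which I would split into an initial-segment claim and an unboundedness claim. For the initial segment, I would prove by transfinite induction on the ordinal $\gamma=o(A)$ that every $\delta<\gamma$ lies in $o(\B)$: given $\delta<o(A)$, the supremum in the definition of $o(A)$ furnishes some $B<_0 A$ with $\delta\leq o(B)$, and replacing $B$ by its \BNF\ representative we may take $B\in\B$, whence $o(B)<o(A)$ by preservation; if $\delta=o(B)$ we are done, and if $\delta<o(B)$ the induction hypothesis applied to $B$ (which satisfies $o(B)<\gamma$) places $\delta$ in $o(\B)$. For unboundedness, the single-letter worms $\langle\alpha\rangle\top$ ($\alpha\in\mathsf{Ord}$) are pairwise distinct elements of $\B$, each in \BNF\ by Lemma~\ref{theorem:WidthOneIsBNF}, so by injectivity $o(\B)$ is a proper class of ordinals and therefore cofinal in $\mathsf{Ord}$. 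An initial segment of $\mathsf{Ord}$ that is unbounded is all of $\mathsf{Ord}$, so $o$ is onto and the embedding is an isomorphism.

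The step I expect to be the genuine obstacle, and the only place where real care is needed, is ensuring that nothing is skipped below an attained value, i.e.\ the initial-segment induction together with the totality of $o$. Both rest squarely on the cited well-foundedness and linearity of $<_0$ on $\B$ and on the existence of \BNF\ representatives; once these are in hand, the argument is essentially the standard fact that the rank function of a set-like proper-class well-order is the canonical isomorphism onto $\mathsf{Ord}$, the sole paper-specific ingredient being the cofinal family of single-letter worms that guarantees the order type is exactly $\mathsf{Ord}$ rather than a proper initial segment.
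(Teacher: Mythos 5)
Your proposal is correct, but there is nothing in this paper to compare it against step by step: the paper states Lemma~\ref{theorem:oDefinesAnIsomorphism} without proof, importing it (together with the linearity of $<_0$ on \B, the existence and uniqueness of \BNF\ representatives, and the totality of $o$) from the cited companion work \cite{FernandezJoosten:2012:WellOrders}. Taking those same facts as inputs, your reconstruction is sound and is the natural one: order-preservation is immediate from the recursive definition of $o$, injectivity and order-reflection follow from trichotomy on \B, downward closure of the image follows by transfinite induction on $o(A)$ using \BNF\ representatives, and cofinality follows from the proper class of single-letter worms $\langle\alpha\rangle\top$ (in \BNF\ by Lemma~\ref{theorem:WidthOneIsBNF}), so the image is a downward-closed unbounded subclass of $\mathsf{Ord}$, hence all of it. One point deserves slightly more care than your phrasing suggests: well-foundedness of $<_0$ alone does not legitimize the recursion, since $\{B\in S \mid B<_0 A\}$ is a priori a class and the supremum need not exist; one also needs that this class is set-like modulo provable equivalence (equivalently, that the order-type below each worm is a genuine ordinal). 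You implicitly acknowledge this by deferring to the paper's remark that ``it is not at all obvious that $o_\alpha$ is defined everywhere, but this turns out to be the case,'' which is exactly where that boundedness is cited rather than proved; with totality granted, the rest of your argument is the standard fact that the rank function of a set-like class well-order is the canonical isomorphism onto $\mathsf{Ord}$, and it goes through without gaps.
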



\subsection{\texorpdfstring{A calculus for $o$}{A calculus for o}}


In this subsection we state a calculus for computing $o$  and $o_\alpha$. Proofs and details of the calculus presented here can be found in \cite{FernandezJoosten:2012:WellOrders}.
We first need a syntactical operation that promotes or demotes worms in terms of consistency strength.

\begin{defi}[$\alpha\uparrow$ and $\alpha \downarrow$]
Let $A$ be a worm and $\alpha$ an ordinal. By $\alpha \uparrow A$ we denote the worm that is obtained by simultaneously substituting each $\beta$ that occurs in $A$ by $\alpha + \beta$. 

Likewise, if $A \in S_{\alpha}$ we denote by $\alpha \downarrow A$ the worm that is obtained by replacing simultaneously each $\beta$ in $A$ by $-\alpha+\beta$.
\end{defi}
Note that by Lemma \ref{theorem:BasicPropertiesOrdinalArithmetic}, the operation $\alpha \downarrow$ is well-defined on $S_{\alpha}$. The next lemma enumerates some noteworthy properties of these promoting and demoting operations. 

\begin{lemma}\label{theorem:uparrowProperties}For $\alpha, \beta, \gamma$ ordinals and worms $A,B$ we have:

\begin{enumerate}
\item
$\alpha \uparrow \beta < \alpha \uparrow \gamma \ \Leftrightarrow \ \beta < \gamma$,

\item
$\alpha \uparrow \beta \geq \beta$,

\item\label{item:additivity:theorem:uparrowProperties}
$\alpha \uparrow (\beta \uparrow A) = (\alpha + \beta)\uparrow A$,

\item \label{item:upDownarrowIsIdentity:theorem:uparrowProperties}
$\alpha\downarrow(\beta\uparrow A)=(-\alpha+\beta)\uparrow A$, provided $\alpha\leq \beta$,

\item\label{coadd}
$\alpha\downarrow(\beta\downarrow A)=(\beta+\alpha)\downarrow A$, provided $A\in S_{\beta+\alpha}$,

\item\label{inverse}
$\alpha \uparrow ((\beta+\alpha) \downarrow A) = \beta \downarrow A$ for $A\in S_{\beta+\alpha}$,

\item \label{item:DownDownVersusDownUp:theorem:uparrowProperties}
$(\alpha\downarrow \beta)\downarrow A=\beta\downarrow ( \alpha \uparrow A)$, provided $\alpha\leq \beta$ and $A \in S_{\alpha\downarrow \beta}$,

\item\label{item:SmallerRelationExtendsLargerRelation:theorem:uparrowProperties}
$A <_\alpha B \ \Leftrightarrow \ A<B$ for $A,B \in S_\alpha$,

\item\label{item:comparingDifferentOrders:theorem:uparrowProperties}
$A <_{\xi} B \ \Leftrightarrow \alpha \uparrow A <_{\alpha + \xi}  \alpha \uparrow B$.


\end{enumerate}

\end{lemma}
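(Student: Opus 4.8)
The plan is to split the nine items into the purely arithmetic ones, (1)--(7), which reduce to identities about ordinal addition performed letter-by-letter, and the two genuinely modal ones, (8) and (9), which record how the shift operation $\uparrow$ interacts with the provability orders.

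For (1) and (2) I would note that on a single ordinal $\uparrow$ acts as $\alpha\uparrow\beta=\alpha+\beta$, so these are just the standard facts that left addition is strictly increasing in its right argument ($\alpha+\beta<\alpha+\gamma\Leftrightarrow\beta<\gamma$) and that $\alpha+\beta\geq\beta$. For (3)--(7) I would exploit that $\uparrow$ and $\downarrow$ are defined by \emph{simultaneous substitution} on the modalities, so two worms coincide exactly when the two substitutions agree on every occurring ordinal $\gamma$. Hence each worm identity collapses to one ordinal identity: (3) to associativity $\alpha+(\beta+\gamma)=(\alpha+\beta)+\gamma$; (4) to $-\alpha+(\beta+\gamma)=(-\alpha+\beta)+\gamma$ for $\alpha\leq\beta$; (5) to $-\alpha+(-\beta+\gamma)=-(\beta+\alpha)+\gamma$; (6) to $\alpha+(-(\beta+\alpha)+\gamma)=-\beta+\gamma$; and (7) to $-(-\alpha+\beta)+\gamma=-\beta+(\alpha+\gamma)$. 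Each follows from associativity together with the defining equation $\delta+(-\delta+\zeta)=\zeta$ of subtraction from Lemma~\ref{theorem:BasicPropertiesOrdinalArithmetic}(1); the only thing to watch is that, under the stated side-conditions ($A\in S_{\beta+\alpha}$, $A\in S_{\alpha\downarrow\beta}$, etc.), every $\downarrow$ is applied to an ordinal that lies in its domain. A short monotonicity check (again via (1)) shows the side-conditions guarantee exactly this, e.g. $\gamma\geq\beta+\alpha$ yields $-\beta+\gamma\geq\alpha$, so the inner $\alpha\downarrow$ in (5) is legitimate.

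For (9) I would first observe that the modality-shift $\sigma_\alpha$ that replaces every $[\beta]$ by $[\alpha+\beta]$ throughout a formula sends \glp-theorems to \glp-theorems: it carries each axiom schema to an instance of the same schema --- using (1) to see that the side-conditions $\beta<\gamma$ and $\beta\leq\gamma$ are preserved under $\beta\mapsto\alpha+\beta$ --- and it commutes with Modus Ponens and necessitation. Since $\sigma_\alpha(A)=\alpha\uparrow A$ on worms, applying $\sigma_\alpha$ to a proof of $B\to\langle\xi\rangle A$ gives a proof of $\alpha\uparrow B\to\langle\alpha+\xi\rangle\alpha\uparrow A$, which is the forward implication. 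For the converse I would restrict to $A,B\in S_\xi$ (so that $\alpha\uparrow A,\alpha\uparrow B\in S_{\alpha+\xi}$) and invoke that $<_\xi$ linearly orders $S_\xi$ modulo provable equivalence: if $A\not<_\xi B$ then $A\equiv B$ or $B<_\xi A$, and in either case the forward implication, together with irreflexivity and antisymmetry of the well-order $<_{\alpha+\xi}$, contradicts $\alpha\uparrow A<_{\alpha+\xi}\alpha\uparrow B$. Item (8) runs on the same template: its forward direction is immediate from the dualized monotonicity axiom $\glp\vdash\langle\alpha\rangle A\to\langle 0\rangle A$, and its backward direction again uses totality of $<_\alpha$ on $S_\alpha$ plus antisymmetry of $<_0$ and the forward direction to exclude both $A\equiv B$ and $B<_\alpha A$.

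The main obstacle is precisely the backward direction of (8) and (9): unlike the promotion $\sigma_\alpha$, \emph{demotion} is not a substitution defined on all formulas, so it cannot be pushed through a \glp-proof directly to invert the argument. The clean route is to abandon the search for a syntactic inverse and instead lean on the already-established linearity of the orders $<_\xi$ on $S_\xi$ (from \cite{BeklemishevFernandezJoosten:2012:LinearlyOrderedGLP,FernandezJoosten:2012:WellOrders}), upgrading the single easy implication to an equivalence by a trichotomy-plus-antisymmetry argument.
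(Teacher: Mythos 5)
Your treatment of items (1)--(7) is essentially identical to the paper's: the authors likewise observe that it suffices to check the identities on single ordinals (since $\uparrow$ and $\downarrow$ act letterwise), call (1)--(3) trivial, and verify (4)--(7) by exactly the computations you list, phrased via the uniqueness clause of Lemma \ref{theorem:BasicPropertiesOrdinalArithmetic}(1) (e.g.\ showing $\alpha+\bigl((\alpha\downarrow\beta)\uparrow\gamma\bigr)=\beta+\gamma$, so that $(\alpha\downarrow\beta)\uparrow\gamma$ is \emph{the} ordinal $\delta$ with $\alpha+\delta=\beta+\gamma$). Your side-condition checks are also correct. For (8) and (9) the paper gives no proof at all --- it cites \cite{FernandezJoosten:2012:WellOrders} --- so there you are supplying an argument the paper omits. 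Your substitution argument for the forward direction of (9) is sound (the modality shift preserves every axiom schema because left addition preserves $<$ and $\leq$), and your trichotomy-plus-irreflexivity argument for (8) is correct given the known linearity of $<_\alpha$ on $S_\alpha$ modulo provable equivalence.

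The genuine gap is in the backward direction of (9). The lemma is stated for \emph{arbitrary} worms $A,B$ (note that item (8) carries the restriction to $S_\alpha$ while item (9) deliberately does not, and $\alpha\uparrow A$ lies in $S_\alpha$ but generally not in $S_{\alpha+\xi}$, so the extended order of Section \ref{section:OrdersAndGeneralConsiderations} is genuinely in play). You restrict to $A,B\in S_\xi$, and your trichotomy argument cannot be lifted: $<_\xi$ is not total on $S$ for $\xi>0$ --- the paper exhibits infinite $<_\xi$-antichains in Subsection \ref{section:antiChains} --- so ``not $A<_\xi B$'' does not yield $A\equiv B$ or $B<_\xi A$. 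The missing piece is a reduction of the general case to your restricted case, which Lemma \ref{lemma:reducingGeneralOrderToSpecialOrder} provides: since $h_{\alpha+\xi}(\alpha\uparrow A)=\alpha\uparrow h_\xi(A)$ and $r_{\alpha+\xi}(\alpha\uparrow A)=\alpha\uparrow r_\xi(A)$ (as $\beta\geq\xi \Leftrightarrow \alpha+\beta\geq\alpha+\xi$), the statement $\alpha\uparrow B\to\la\alpha+\xi\ra\,\alpha\uparrow A$ decomposes into a head conjunct inside $S_{\alpha+\xi}$, handled by your restricted case, and a remainder conjunct $\alpha\uparrow B\to\alpha\uparrow r_\xi(A)$; if $r_\xi(A)=\la\zeta\ra A'$ with $\zeta<\xi$, the latter is again of the form $A'<_\zeta B$ at a strictly smaller index, so a well-founded recursion on the first letter of the remainder closes the argument. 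Without this (or some substitute for the non-existent syntactic demotion, which you correctly identify as the obstacle), your proof establishes item (9) only on $S_\xi\times S_\xi$, which is strictly weaker than what is claimed and used.
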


\proof
The first three items are trivial an proofs of the last two items can be found in \cite{FernandezJoosten:2012:WellOrders}. It is clearly sufficient to prove the other items only for ordinals rather than for worms.
For Item \ref{item:upDownarrowIsIdentity:theorem:uparrowProperties} let $\alpha\leq \beta$ and fix some ordinal $\gamma$. We see that 
\[
\begin{array}{lll}
\alpha + (\alpha \downarrow \beta)\uparrow \gamma & = &\alpha + ((\alpha \downarrow \beta) + \gamma)\\
&  = & (\alpha + (\alpha \downarrow \beta)) + \gamma\\
  & = & \beta + \gamma.\\
\end{array}
\]
Thus, $(\alpha \downarrow \beta)\uparrow \gamma$ is the unique ordinal $\delta$ so that $\alpha + \delta = \beta + \gamma$. In other words, \\
$\alpha\downarrow(\beta\uparrow \gamma)=(-\alpha+\beta)\uparrow \gamma$, provided $\alpha\leq \beta$. 

For Item \ref{coadd} we reason similarly and see for $\gamma \geq (\beta+\alpha)$ that 
\[
\begin{array}{lll}
(\beta + \alpha) + \alpha \downarrow (\beta \downarrow \gamma) & = &\beta +(\alpha + \alpha \downarrow (\beta \downarrow \gamma))\\
& = &\beta + \beta \downarrow \gamma\\
&  = &  \gamma.\\
\end{array}
\]
Thus, $\alpha \downarrow (\beta \downarrow \gamma) = (\beta + \alpha)\downarrow \gamma$ provided $\gamma \geq (\beta+\alpha)$. For Item \ref{inverse}, let $\gamma \geq \beta + \alpha$ whence
\[
\begin{array}{lll}
\beta +  \alpha \uparrow ((\beta + \alpha) \downarrow \gamma) & = &\beta +(\alpha + (\beta + \alpha) \downarrow \gamma)\\
& = &(\beta +\alpha) + (\beta + \alpha) \downarrow \gamma\\
&  = &  \gamma.\\
\end{array}
\]
Thus, $\alpha \uparrow ((\beta + \alpha) \downarrow \gamma)= \beta \downarrow \gamma $ provided $\gamma \geq \beta + \alpha$. For Item \ref{item:DownDownVersusDownUp:theorem:uparrowProperties} let $\alpha\leq \beta$ and $\gamma \geq {\alpha\downarrow \beta}$. 
\[
\begin{array}{lll}
\beta +  (\alpha\downarrow \beta)\downarrow  \gamma & = &(\alpha + \alpha \downarrow \beta ) +  (\alpha\downarrow \beta)\downarrow  \gamma\\
& = &\alpha + (\alpha \downarrow \beta  +  (\alpha\downarrow \beta)\downarrow  \gamma)\\
&  = & \alpha + \gamma\\
&  = & \alpha \uparrow \gamma.\\
\end{array}
\]
Thus, $(\alpha\downarrow \beta)\downarrow \gamma=\beta\downarrow ( \alpha \uparrow \gamma)$, provided $\alpha\leq \beta$ and $\gamma \geq {\alpha\downarrow \beta}$.

\qed

Note that Items \ref{item:additivity:theorem:uparrowProperties} --- \ref{coadd} can be seen as some associative laws if we formulate them as $\alpha \uparrow (\beta \uparrow \gamma) = (\alpha \uparrow \beta )\uparrow \gamma$, as $\alpha \downarrow (\beta \uparrow \gamma) = (\alpha \downarrow \beta) \uparrow \gamma$ for $\alpha \leq \beta$, and as $\alpha \downarrow (\beta \downarrow \gamma) = (\beta \uparrow \alpha) \downarrow \gamma$ provided $\gamma \geq \beta + \alpha$ respectively. 

However, we do not have a general expression expressing some for of associativity for $\alpha \uparrow (\beta \downarrow \gamma)$ when $\gamma \geq \beta$ and $\alpha$ and $\beta$ entirely unrelated. When $\alpha$ and $\beta$ bear some relation partial results can be obtained such as Item \ref{inverse} to the effect that $\alpha \uparrow ((\beta\uparrow\alpha) \downarrow \gamma) = \beta \downarrow \gamma$ for $\gamma \geq {\beta+\alpha}$. Likewise, one can show $(\alpha \downarrow \beta) \uparrow (\beta \downarrow \gamma) = \alpha \downarrow \gamma$ for $\alpha \leq \beta \leq \gamma$. 

Note that by our results we do have some form of associativity for $(\cdot \circ \cdot) \circ' \cdot$ for all combinations of $\circ, \circ' \in \{  \uparrow, \downarrow \}$. It is unclear whether all equalities in the language $\{ \uparrow, \downarrow \}$ can be finitely axiomatized. We conjecture the corresponding first-order theory to be decidable.\\
\medskip

In \cite{FernandezJoosten:2012:WellOrders} it is proven that $\alpha{\uparrow}$ is a well-behaved map with nice properties. In particular, $\alpha{\uparrow}$ can also be viewed as an isomorphism:

\begin{lemma}\label{theorem:IsomorphicWormFragments}
The map $\alpha {\uparrow}$ is an isomorphism between $(S, <)$ and $(S_\alpha,<_{\alpha})$.
\end{lemma}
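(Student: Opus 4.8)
The plan is to exhibit $\alpha{\downarrow}$ as a two-sided inverse of $\alpha{\uparrow}$ and then to read off the order-isomorphism property directly from the last item of Lemma \ref{theorem:uparrowProperties}. The whole argument rides on results already established, so it reduces to careful bookkeeping rather than any new idea.

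First I would check well-definedness: by definition $\alpha{\uparrow}A$ replaces each modality $\beta$ occurring in $A$ by $\alpha+\beta\geq\alpha$, so $\alpha{\uparrow}A\in S_\alpha$ for every worm $A$. Next I would establish bijectivity by showing that $\alpha{\downarrow}$ inverts $\alpha{\uparrow}$ on both sides. For any $A\in S$, Item \ref{item:upDownarrowIsIdentity:theorem:uparrowProperties} of Lemma \ref{theorem:uparrowProperties} with $\beta=\alpha$ gives $\alpha{\downarrow}(\alpha{\uparrow}A)=(-\alpha+\alpha){\uparrow}A=0{\uparrow}A=A$. Conversely, for $C\in S_\alpha$ every modality $\beta$ in $C$ satisfies $\beta\geq\alpha$, so $\alpha{\downarrow}C$ is well-defined by Lemma \ref{theorem:BasicPropertiesOrdinalArithmetic}, and applying $\alpha{\uparrow}$ sends each such $\beta$ to $\alpha+(-\alpha+\beta)=\beta$, whence $\alpha{\uparrow}(\alpha{\downarrow}C)=C$. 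Hence $\alpha{\uparrow}$ is a syntactic bijection between $S$ and $S_\alpha$.

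It then remains to verify that $\alpha{\uparrow}$ is an order-isomorphism, and this is precisely Item \ref{item:comparingDifferentOrders:theorem:uparrowProperties} of Lemma \ref{theorem:uparrowProperties} instantiated at $\xi=0$, since $\alpha+0=\alpha$: we obtain $A<B\Leftrightarrow\alpha{\uparrow}A<_\alpha\alpha{\uparrow}B$. Being a biconditional, this shows simultaneously that $\alpha{\uparrow}$ preserves and reflects the strict orders, so no further work is needed for the ordering itself.

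The only genuinely delicate point, and the step I would flag as the main obstacle, is that $(S,<)$ and $(S_\alpha,<_\alpha)$ are orders only modulo provable equivalence, so one must confirm that $\alpha{\uparrow}$ respects equivalence classes and descends to a bijection between the quotients. This too follows from the same biconditional: if $A$ and $B$ are $<$-incomparable then $\alpha{\uparrow}A$ and $\alpha{\uparrow}B$ are $<_\alpha$-incomparable, and since $<_\alpha$ linearly orders $S_\alpha$ modulo provable equivalence, incomparability coincides with equivalence on both sides. Thus the syntactic bijection carries equivalence classes to equivalence classes in an order-preserving and order-reflecting way, which is exactly the asserted isomorphism.
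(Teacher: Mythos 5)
Your proof is correct, but note that the paper itself offers no internal proof of Lemma \ref{theorem:IsomorphicWormFragments}: it is quoted as a result from \cite{FernandezJoosten:2012:WellOrders}, so there is no argument in the text to compare against, and your derivation is a self-contained reconstruction from material the paper does state. The bookkeeping is right: Item \ref{item:upDownarrowIsIdentity:theorem:uparrowProperties} of Lemma \ref{theorem:uparrowProperties} with $\beta=\alpha$ gives $\alpha\downarrow(\alpha\uparrow A)=0\uparrow A=A$, the computation $\alpha+(-\alpha+\beta)=\beta$ (Lemma \ref{theorem:BasicPropertiesOrdinalArithmetic}.1, or equivalently Item \ref{inverse} with $\beta=0$) gives the other inverse identity, so $\alpha\uparrow$ is a syntactic bijection from $S$ onto $S_\alpha$; and Item \ref{item:comparingDifferentOrders:theorem:uparrowProperties} instantiated at $\xi=0$ yields $A<B\Leftrightarrow\alpha\uparrow A<_\alpha\alpha\uparrow B$, which preserves and reflects the orders at once. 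Two remarks. First, the step you flag as the main obstacle is lighter than you suggest: since $\alpha\uparrow$ is literally a bijection on strings of ordinals, the biconditional already makes it an isomorphism of the relational structures $(S,<)$ and $(S_\alpha,<_\alpha)$ exactly as the lemma is phrased; your quotient argument (incomparability coincides with provable equivalence, using linearity of $<_0$ modulo equivalence, Item \ref{item:SmallerRelationExtendsLargerRelation:theorem:uparrowProperties}, and irreflexivity) is nevertheless correct and is precisely what is needed if one insists on reading the isomorphism at the level of equivalence classes. Second, be aware that the load-bearing Item \ref{item:comparingDifferentOrders:theorem:uparrowProperties} is itself unproven in this paper --- the proof of the last two items of Lemma \ref{theorem:uparrowProperties} is likewise deferred to \cite{FernandezJoosten:2012:WellOrders} --- so your argument is relative to that item; this is legitimate within the present paper and avoids circularity, since the cited source establishes the biconditional directly by modal reasoning and obtains the isomorphism as a corollary, which is essentially the route you have taken.
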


In \cite{FernandezJoosten:2012:WellOrders} we introduced the functions $e^\alpha$ that we call \emph{hyperexponentionals}. 

\begin{defi}\label{theorem:recursiveSchemeForHyperexponentials}
For ordinals $\alpha$ and $\beta$, the values $e^\alpha(\beta)$ are determined by the following recursion.
\begin{enumerate}
\item
$e^\alpha 0 =0$ for all $\alpha \in {\sf Ord}$;
\item
$e^1 = e$ where $e$ enumerates the set $\{ 0\} \cup \{ \omega^{1+\alpha} \mid \alpha \in {\sf Ord}\}$;
\item
$e^{\alpha + \beta} = e^{\alpha} e^{\beta}$;
\item
$e^\alpha (\lambda) = \cup_{\beta<\lambda} e^\alpha (\beta)$ for additively indecomposable limit ordinals $\lambda$;

\item
$e^{\lambda} (\beta +1) = \cup_{\lambda' < \lambda} e^{\lambda'} (e^{\lambda}(\beta) +1)$ for $\lambda$ an additively indecomposable limit ordinal.

\end{enumerate}
\end{defi}

By an easy induction one can check that each $e^\alpha$ is a normal function.
Based on these hyperexponential functions $e^\alpha$ we can formulate an elegant calculus to compute the values of $o_\alpha(A)$:

\begin{theorem}\label{theorem:OrderTypeCalculus}
\ \\
\vspace{-0.4 cm}
\begin{enumerate}
\item
$o(0^n)=n$;

\item
If ${A} = A_n 0 \hdots  A_1 \in \B$ and $A_1\in \B_1$ is not empty, then\\
 $o({A})= \omega^{o(1\downarrow A_1)}+\hdots+\omega^{o(1\downarrow A_n)}$, where\\
 for $n=1$ we denote by $A_n 0 \hdots  A_1$ simply $A_1$;

\item\label{exo}
$o({\x}\uparrow {A}) = \ex^{\x} o({A})$,

\item\label{down}
$o_{\x} (A) = o(\x \downarrow A)$ for $A \in S_{\x}$.

\end{enumerate}
\end{theorem}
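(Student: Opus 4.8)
The four clauses are interdependent, so my plan is to bootstrap them from the isomorphism results already in place rather than prove each in isolation. I would start with Item~\ref{down}, which is purely structural. By item~\ref{item:upDownarrowIsIdentity:theorem:uparrowProperties} and item~\ref{inverse} (with $\beta=0$) of Lemma~\ref{theorem:uparrowProperties}, the map $\x{\downarrow}$ is a two-sided inverse of $\x{\uparrow}$ on $S_\x$, so by Lemma~\ref{theorem:IsomorphicWormFragments} it is an order-isomorphism $(S_\x,<_\x)\to(S,<)$. Such an isomorphism carries the initial segment $\{B\in S_\x\mid B<_\x A\}$ order-isomorphically onto $\{C\in S\mid C<\x{\downarrow}A\}$, whence these have equal order type; that is, $o_\x(A)=o(\x{\downarrow}A)$. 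I expect no difficulty here.

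For Item~\ref{exo} the plan is to fix $\x$ and define an ordinal function $g^\x$ by $g^\x(o(A)):=o(\x{\uparrow}A)$ with $A$ ranging over $\B$, using Lemmas~\ref{theorem:oDefinesAnIsomorphism} and~\ref{theorem:IsomorphicWormFragments} for well-definedness and monotonicity; continuity (hence normality of $g^\x$) comes from $\x{\uparrow}$ and $o$ commuting with the relevant suprema. I would then check that $g^\x$ satisfies the recursion of Definition~\ref{theorem:recursiveSchemeForHyperexponentials} and invoke uniqueness of its normal solution to conclude $g^\x=e^\x$. The clause $g^\alpha(0)=0$ is immediate since $\x{\uparrow}\top=\top$; the semigroup law $g^{\alpha+\beta}=g^\alpha g^\beta$ is a direct translation of item~\ref{item:additivity:theorem:uparrowProperties} of Lemma~\ref{theorem:uparrowProperties}, namely $\alpha{\uparrow}(\beta{\uparrow}A)=(\alpha+\beta){\uparrow}A$; and continuity at additively indecomposable limit arguments follows from normality of $g^\x$.

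The combinatorial heart is the base case $e^1=e$, i.e.\ the identity $o(B)=\omega^{o_1(B)}$ for nonempty $B\in S_1$ (equivalently $o(1{\uparrow}A)=\omega^{o(A)}$), together with the limit-superscript clause of the recursion. For the base case I would analyse the initial segment $\{D\in S\mid D<_0 B\}$ directly from the axioms of \glp through Lemma~\ref{lemma:basicLemma}, showing by transfinite induction that this segment is order-isomorphic to $\omega^{o_1(B)}$. Granting the base case, Item~\ref{exo} is finished through the clause $e^{\lambda}(\beta+1)=\bigcup_{\lambda'<\lambda}e^{\lambda'}(e^\lambda(\beta)+1)$, which expresses $\lambda{\uparrow}$ for a limit $\lambda$ in terms of the $\lambda'{\uparrow}$ with $\lambda'<\lambda$; reconciling the substitution $\beta\mapsto\lambda+\beta$ with its approximations $\beta\mapsto\lambda'+\beta$ is the technically most delicate point and is, I expect, the main obstacle of the whole theorem.

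Finally, the second and first items follow by assembling the block decomposition. Writing $A=C\,0\,A_1\in\B$ with $A_1\in S_1$ and $C$ the remaining \BNF{} prefix, the key recursion is $o(C\,0\,A_1)=\omega^{o_1(A_1)}+o(C)$, which I would establish by splitting $\{D\mid D<_0 C\,0\,A_1\}$ according to whether the leading block $\langle 0\rangle A_1$ is ``passed,'' using item~\ref{lemma:basicLemma2} of Lemma~\ref{lemma:basicLemma} to concatenate sub-worms. Since \BNF{} guarantees $o_1(A_1)\geq o_1(A_2)\geq\cdots$, iterating this recursion over the $n$ blocks produces the Cantor normal form $\omega^{o(1{\downarrow}A_1)}+\cdots+\omega^{o(1{\downarrow}A_n)}$ of the second item, while the all-empty case (each $\omega^{o(1{\downarrow}\lambda)}=\omega^0=1$, summed $n$ times) gives $o(0^n)=n$ of the first item, where $0^n\in\B$ by Lemma~\ref{theorem:WidthOneIsBNF}. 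The reduction $o_1=o\circ(1{\downarrow})$ supplied by Item~\ref{down} is exactly what lets the exponents be read off as ordinary $o$-values.
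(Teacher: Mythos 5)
The paper itself gives no proof of this theorem---it defers to \cite{FernandezJoosten:2012:WellOrders}---so your proposal must stand on its own, and it does not: the recursion driving your items 1 and 2 is false. You claim $o(C\,0\,A_1)=\omega^{o_1(A_1)}+o(C)$. Take $A=\la 0\ra\la 1\ra\top$, which is in $\B$ (it is $A_2\,0\,A_1$ with $A_2=\lambda$, $A_1=\la 1\ra\top$, and $\lambda\leq_1\la 1\ra\top$). Your recursion, with $C=\lambda$, gives $o(A)=\omega^{o_1(\la 1\ra\top)}+o(\lambda)=\omega$, whereas item 2 of the theorem gives $\omega^{o(1\downarrow\la 1\ra\top)}+\omega^{o(1\downarrow\lambda)}=\omega+1$, as the chain $\top<0<00<\cdots<\la 1\ra\top<\la 0\ra\la 1\ra\top$ confirms directly; moreover $o(\la 0\ra\la 1\ra\top)=\omega=o(\la 1\ra\top)$ would contradict the injectivity of $o$ on $\B$ (Lemma \ref{theorem:oDefinesAnIsomorphism}), which you yourself invoke. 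The root of the failure is that in item 2 \emph{every} block contributes a term, so each empty block $A_i=\lambda$ with $i\geq 2$ adds $\omega^0=1$; your peel replaces the tail $\omega^{o_1(A_2)}+\cdots+\omega^{o_1(A_n)}$ by $o(C)$, and when $C=0^k$ (all remaining blocks empty---and in a worm in \BNF{} the empty blocks form precisely a leading segment, since $A_{i+1}\leq_1 A_i=\lambda$ forces $A_{i+1}=\lambda$) that tail equals $k+1$ while $o(0^k)=k$. Hence every $A\in\B$ with a leading $\la 0\ra$ comes out exactly one too small (e.g.\ $o(001)$ becomes $\omega+1$ instead of $\omega+2$), while your derivation of item 1 happens to be unaffected, which masks the inconsistency. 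The correct local identities peel from the \emph{left}: $o(\la 0\ra A)=o(A)+1$, and $o(A)=o(r_1(A))+\omega^{o_1(h_1(A))}$ whenever $h_1(A)\neq\lambda$; iterating these does produce items 1 and 2.

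On the remaining items: your item \ref{down} argument is correct and complete---by items \ref{item:upDownarrowIsIdentity:theorem:uparrowProperties} and \ref{inverse} of Lemma \ref{theorem:uparrowProperties}, $\xi{\downarrow}$ inverts $\xi{\uparrow}$ on $S_\xi$, so Lemma \ref{theorem:IsomorphicWormFragments} makes it an order isomorphism and $o_\xi=o\circ(\xi{\downarrow})$ follows by induction along $<_\xi$. For item \ref{exo}, the architecture (show $A\mapsto o(\xi\uparrow A)$ induces a family satisfying Definition \ref{theorem:recursiveSchemeForHyperexponentials} and appeal to uniqueness among normal solutions) is sound in outline, but all of the mathematical weight sits at exactly the points you leave promissory: the base identity $g^1=e$, clause 5 at indecomposable limit superscripts, and---not mere bookkeeping---normality of $g^\xi$, since continuity of $g^\xi$ amounts to closure of $o(\B\cap S_\xi)$ under suprema, i.e.\ to the statement that $e^\xi$ enumerates $o(S_\xi)$, which is essentially the content being proved. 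So as it stands the proposal establishes item \ref{down}, reduces item \ref{exo} to its hard kernels without resolving them, and derives items 1 and 2 from a recursion that is off by one.
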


Note that the last item of this theorem is not needed to compute $o$. It merely tells us how to reduce $o_\alpha$ to $o$.
The $e^\alpha$ functions can be related to the more familiar Veblen progressions.

\begin{lemma}\label{theorem:OrderOFWormsOfOmegaPowersAreFixedPoints}
$e^{\omega^{\alpha}} = e_\alpha$.
\end{lemma}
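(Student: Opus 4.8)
The plan is to prove $e^{\omega^{\alpha}} = e_{\alpha}$ by transfinite induction on $\alpha$, using the recursion clauses of Definition~\ref{theorem:recursiveSchemeForHyperexponentials} together with the fact (noted immediately after that definition) that each $e^{\xi}$ is normal. The base case is immediate: since $\omega^{0}=1$, clause (2) gives $e^{\omega^{0}} = e^{1} = e = e_{0}$.

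For the successor case, suppose $e^{\omega^{\gamma}} = e_{\gamma}$; I would show $e^{\omega^{\gamma+1}}(\beta) = e_{\gamma+1}(\beta)$ by a secondary induction on $\beta$. At $\beta=0$ both sides vanish (clause (1), and $e_{\alpha}(0)=0$ as observed earlier). The crucial step is $\beta = \delta+1$: since $\omega^{\gamma+1}$ is additively indecomposable, clause (5) applies and yields $e^{\omega^{\gamma+1}}(\delta+1) = \bigcup_{\lambda'<\omega^{\gamma+1}} e^{\lambda'}(e^{\omega^{\gamma+1}}(\delta)+1)$. The ordinals $\omega^{\gamma}\cdot n$ are cofinal below $\omega^{\gamma+1}$, and clause (3) with the induction hypothesis gives $e^{\omega^{\gamma}\cdot n} = (e^{\omega^{\gamma}})^{n} = e_{\gamma}^{\,n}$; hence, using monotonicity of $\lambda'\mapsto e^{\lambda'}(\mu)$ to see that the remaining terms contribute nothing new, the union collapses to $\sup_{n} e_{\gamma}^{\,n}(e_{\gamma+1}(\delta)+1)$. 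This supremum is exactly the least fixpoint of $e_{\gamma}$ strictly above $e_{\gamma+1}(\delta)$, which by definition of the Veblen derivative is $e_{\gamma+1}(\delta+1)$. The limit subcase of $\beta$ follows because both functions are normal and hence continuous, so agreement below a limit forces agreement at it.

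For the limit case $\alpha=\lambda$, both $e^{\omega^{\lambda}}$ and $e_{\lambda}$ are normal, so it suffices to show they enumerate the same closed unbounded class, i.e.\ that $\mathrm{Im}(e^{\omega^{\lambda}}) = \bigcap_{\gamma<\lambda}\mathrm{Im}(e^{\omega^{\gamma}})$; the right-hand side equals $\bigcap_{\gamma<\lambda}\mathrm{Im}(e_{\gamma})$ by the induction hypothesis, which is precisely the class $e_{\lambda}$ enumerates. The inclusion $\subseteq$ is easy: for $\gamma<\lambda$ there is $\mu$ with $\omega^{\gamma}+\mu = \omega^{\lambda}$, so clause (3) gives $e^{\omega^{\lambda}} = e^{\omega^{\gamma}}\circ e^{\mu}$ and therefore $\mathrm{Im}(e^{\omega^{\lambda}})\subseteq\mathrm{Im}(e^{\omega^{\gamma}})$.

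The main obstacle is the reverse inclusion $\supseteq$ in the limit case: one must show that an ordinal lying in every image $\mathrm{Im}(e^{\omega^{\gamma}})$ for $\gamma<\lambda$ already lies in $\mathrm{Im}(e^{\omega^{\lambda}})$. This does not follow from the recursion clauses alone and is where the genuine structural theory of hyperations is needed; I would invoke the closure properties of hyperation images established in \cite{FernandezJoosten:2012:Hyperations}, together with the continuity built into clause (4) at the additively indecomposable argument, to identify the image of the $\omega^{\lambda}$-th hyperate with the intersection of the earlier images. The successor-case verification that the ``extra'' terms $e^{\lambda'}$ with $\lambda'$ not of the form $\omega^{\gamma}\cdot n$ do not enlarge the union is a secondary technical point that I would dispatch by the same monotonicity estimates.
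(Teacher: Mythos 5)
The successor case of your induction is complete and correct: clause (5), cofinality of $\omega^\gamma\cdot n$ below $\omega^{\gamma+1}$, monotonicity of $\lambda'\mapsto e^{\lambda'}(\mu)$ (which indeed follows from clause (3) and $e^{\rho}(\mu)\geq\mu$), and the standard fact that $\sup_n e_\gamma^n(\mu+1)$ is the least fixpoint of $e_\gamma$ above $\mu$ together give exactly $e_{\gamma+1}(\delta+1)$. The problem is the limit case, where you abandon this computation in favour of comparing images and then, at the crucial inclusion $\bigcap_{\gamma<\lambda}\mathsf{Im}(e_\gamma)\subseteq\mathsf{Im}(e^{\omega^\lambda})$, appeal to unspecified ``closure properties of hyperation images'' from \cite{FernandezJoosten:2012:Hyperations}. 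But the general identity $f^{\omega^\alpha}=f_\alpha$ is precisely what that paper proves (it is quoted here as Theorem \ref{theorem:HyperExponentialsAndVeblen}, of which the present lemma is the instance $f=e$), so as written your limit case is either circular or a genuine gap: you have not produced the one step that makes the lemma nontrivial.

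The gap is avoidable, and your diagnosis that the limit case ``does not follow from the recursion clauses alone'' is in fact wrong: it follows by the very same argument as your successor case. For limit $\lambda$ the ordinal $\omega^\lambda$ is itself an additively indecomposable limit, so clause (5) applies verbatim to the exponent: $e^{\omega^\lambda}(\beta+1)=\bigcup_{\lambda'<\omega^\lambda}e^{\lambda'}\bigl(e^{\omega^\lambda}(\beta)+1\bigr)$, and since the exponents $\omega^\gamma$ ($\gamma<\lambda$) are cofinal below $\omega^\lambda$, your monotonicity estimate collapses the union to $\sup_{\gamma<\lambda}e_\gamma(\mu+1)$ with $\mu=e_\lambda(\beta)$ by the main and secondary induction hypotheses. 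It remains to check that this supremum is the least element of $F_\lambda=\bigcap_{\gamma<\lambda}F_\gamma$ above $\mu$, i.e.\ equals $e_\lambda(\beta+1)$: the values $e_\gamma(\mu+1)$ are pointwise monotone in $\gamma$ (since $e_\gamma(\mu+1)\leq e_\gamma(e_{\gamma'}(\mu+1))=e_{\gamma'}(\mu+1)$ for $\gamma<\gamma'$), for fixed $\gamma_0$ the tail $\{e_{\gamma'}(\mu+1):\gamma_0<\gamma'<\lambda\}$ lies in the closed class $F_{\gamma_0+1}$ of fixpoints of $e_{\gamma_0}$, so the supremum lies in every $F_{\gamma_0}$; and any member of $F_\lambda$ exceeding $\mu$ is a common fixpoint $\geq\mu+1$ and hence dominates each $e_\gamma(\mu+1)$. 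Limit arguments $\beta$ are again handled by normality. With this patch your induction becomes a self-contained proof from Definition \ref{theorem:recursiveSchemeForHyperexponentials} --- something the paper itself does not supply, since it states the lemma without proof and defers to \cite{FernandezJoosten:2012:WellOrders} and \cite{FernandezJoosten:2012:Hyperations}.
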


Moreover, we note that Lemma \ref{theorem:OrderOFWormsOfOmegaPowersAreFixedPoints} together with Theorem \ref{theorem:recursiveSchemeForHyperexponentials}.3 yields a reduction of computing $e^\alpha$ to the better known Veblen-like functions $e_\alpha$. For if $\alpha = \omega^{\alpha_1}+\ldots +\omega^{\alpha_n}$, then 
\[
e^\alpha = e_{\alpha_1} \circ \ldots \circ e_{\alpha_n}.
\]


\section{Well-founded orders on worms}\label{section:OrdersAndGeneralConsiderations}


In this section we consider the ordering $<_\alpha$ on the full $S\times S$ rather than on $S_\alpha\times S_\alpha$. We shall see that the resulting order is still well-founded but no longer total. Most of the results presented here and in the next section --with the exception of Subsection \ref{section:antiChains}-- were also presented in a similar form in \cite{FernandezJoosten:2012:TuringProgressions}.


\subsection{Well-founded orders}


In Section \ref{section:LinearOrdersOnWorms} we presented the well-orders $<_{\alpha}$ on $S_{\alpha}$. We can also consider the ordering $<_{\alpha}$ on the full class $S$. As we shall see, $<_\alpha$ is no longer linear on $S$. However, as we shall see in Corollary \ref{theorem:UnrestrictedOrderIsWellFounded}, it is still well-founded. Anticipating this, we can define $\Omega_\alpha(A)$, the generalized $<_\alpha$ order-type of a worm $A$.

\begin{defi}\label{definition:OmegaOrder}
Given an ordinal $\x$ and a worm $A$, we define a new ordinal $\Omega_{\x}({A})$ inductively on $<_{\x}$ by
\[
\Omega_\x(A)=\sup_{B<_\x A}(\Omega_\xi(B) +1).
\]
\end{defi}

\noindent
With this, we can assign to each worm $A$ a sequence of order-types.

We will use the notation $\vec{\Omega}({A})$ for the sequence $\la\Omega_\xi(A)\ra_{\xi\in\mathsf{On}}$; that is,
\[
\vec{\Omega}({A})\ \ :=\ \ (\Omega_0({A}), \ \Omega_1({A}),\ \ldots , \Omega_{\omega}({A}),\ \Omega_{\omega+1}({A})\ldots )\ .
\]
We shall refer to these sequences as \emph{Omega-sequences}.


\subsection{Omega-sequences and modal semantics}


Each worm $A$ is known to be consistent with \glp, hence should be satisfied in an exact 
model for its closed fragment, if it exists; that is, a model on which only the theorems of $\glp^0$ are valid.

Suppose $\mathcal{M}$ were such a model. Each modality $\langle \xi \rangle$ will be represented in $\mathcal{M}$ by some relation $\prec_\xi$ in that
\[
\mathcal{M}, \w \Vdash \langle \xi \rangle \phi \ \Leftrightarrow \ \exists \w' \, (\w' \prec_\xi \w \wedge \mathcal{M},\w'\Vdash \phi).
\]
As $[\xi]$ satisfies L\"ob's axiom, we know that each $\prec_\xi$ is transitive and well-founded. 
Consequently, we can assign to each world ${\w}$ a sequence of ordinals 
\[
\vec{\w} \ := \ ({\w}_0, {\w}_1, \ldots {\w}_{\omega}, {\w}_{\omega+1}\ldots),
\] 
where ${\w}_\zeta$ corresponds to the supremum of order-types of $<_\zeta$-chains below ${\w}$. If $\mathcal{M},\w\Vdash A$, then necessarily $\w_\xi \geq \Omega_\xi(A)$ for each $\xi$. A systematic study of $\vec{\Omega}(A)$ will thus also reveal information about models for $\glp^0$.

No such models were known, but in \cite{FernandezJoosten:2012:ModelsOfGLP} the authors define a universal class-size model for $\glp^0$. The worlds in that model closely reflect the $\Omega_\xi(A)$ sequences as defined here. In particular, it turns out that the necessary condition that if $\mathcal{M},\w\Vdash A$, then  $\w_\xi \geq \Omega_\xi(A)$ for each $\xi$ is actually also sufficient.

In Section \ref{section:OmegaSequences} we shall characterize the sequences $\Omega_\xi(A)$ for given $\xi$ and $A$. In the next subsection we shall see how questions about $\Omega_\xi$ can be recursively reduced to questions about $o_\zeta$. 

\subsection{\texorpdfstring{Reducing $\Omega_\xi$ to $o_\zeta$}{Reducing to o}}

In Lemma \ref{lemma:reducingGeneralOrderToSpecialOrder} below we shall see how questions about $\Omega_\xi$ can be recursively reduced to questions about $o_\zeta$.
For this reduction we need the syntactical definitions of \emph{head} and \emph{remainder}.

\begin{defi}
Let $A$ be a worm. By $h_{\x}(A)$ we denote the \emph{${\x}$-head} of ${A}$. Recursively: $h_{\x}(\lambda)=\lambda$, $h_{\x}(\zeta{\ast}{A})= \zeta{\ast} h_{\x}({A})$ if $\zeta\geq {\x}$ and $h_{\x}(\zeta{\ast}{A})= \lambda$ if $\zeta < {\x}$.

Likewise, by $r_{\x}({A})$ we denote the \emph{${\x}$-remainder} of $A$: $r_{\x}(\lambda)=\lambda$, $r_{\x}(\zeta{\ast}{A})= r_{\x}({A})$ if $\zeta\geq {\x}$ and $r_{\x}(\zeta{\ast}{A})= \zeta{\ast}{A}$ if $\zeta < {\x}$.
\end{defi}

In words, $h_{\x}({A})$ corresponds to the largest initial part (reading from left to right) of ${A}$ such that all symbols in $h_{\x}({A})$ are at least ${\x}$ and $r_{\x}({A})$ is that part of ${A}$ that remains when removing its ${\x}$-head. We thus have ${A} = h_{\x}({A}) {\ast} r_{\x}({A})$ for all ${\x}$ and ${A}$. 

Observe that
\begin{equation}\label{heads}
\glp\vdash h_{\x}({A}) {\ast} r_{\x}({A})\leftrightarrow h_{\x}({A}) \wedge r_{\x}({A}),
\end{equation}
as the first symbol of $r_{\x}({A})$ is less than ${\x}$ and $h_{\x}({A}) \in S_{\x}$ (see Lemma \ref{lemma:basicLemma}). Moreover, for each ${\x}$ and each ${A}$ we have that $h_{\x}({A})$ is in normal form whenever $A$ is:

\begin{lemma}\label{theorem:TakingHeadsPreservesNormalForms}
If ${A} \in $ {\BNF}, then also $h_{\zeta}({A}) \in $ {\BNF} and $r_\zeta(A) \in $ {\BNF}.
\end{lemma}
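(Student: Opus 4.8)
The plan is to argue by induction on the length of $A$ (the number of modalities), exploiting the structural decomposition of a worm in \BNF at its minimal modality. The base case $A=\lambda$ is immediate, since $h_\zeta(\lambda)=r_\zeta(\lambda)=\lambda\in\BNF$. For the inductive step, write $A=A_k\alpha\ldots\alpha A_1$ with $\alpha=\min(A)$ and each $A_i\in S_{\alpha+1}$ in \BNF satisfying $A_{i+1}\leq_{\alpha+1}A_i$. Since $\alpha$ genuinely occurs in $A$, there is at least one separator, so in fact $k\geq2$ and each $A_i$ is strictly shorter than $A$, which is what will license the induction hypothesis.

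I would then split according to how $\zeta$ compares with $\alpha$. If $\zeta\leq\alpha$, every modality of $A$ is at least $\zeta$, so $h_\zeta(A)=A$ and $r_\zeta(A)=\lambda$, both trivially in \BNF. The interesting case is $\zeta>\alpha$: reading from the left, the head cannot cross the first $\alpha$-separator, since $\alpha<\zeta$, so it is entirely determined by the leftmost block, giving $h_\zeta(A)=h_\zeta(A_k)$ and correspondingly $r_\zeta(A)=r_\zeta(A_k)\,\alpha\,A_{k-1}\,\alpha\ldots\alpha\,A_1$. The head is then in \BNF directly by the induction hypothesis applied to $A_k$.

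For the remainder I would check that the displayed decomposition satisfies the \BNF clauses with blocks $r_\zeta(A_k),A_{k-1},\ldots,A_1$ and separator $\alpha$, which is again the minimum because all the blocks lie in $S_{\alpha+1}$. Membership of each block in $S_{\alpha+1}$ and in \BNF is clear: $r_\zeta(A_k)$ is a suffix of $A_k$, hence still in $S_{\alpha+1}$, and lies in \BNF by the induction hypothesis, while the blocks $A_{k-1},\ldots,A_1$ are inherited from $A$. The descending conditions $A_{i+1}\leq_{\alpha+1}A_i$ for $i<k-1$ are inherited as well, so everything reduces to the single new comparison $r_\zeta(A_k)\leq_{\alpha+1}A_{k-1}$.

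This last comparison is the main obstacle, and I would isolate it as a small lemma: if $X,Y\in S_{\alpha+1}$ and $\glp\vdash Y\to X$, then $X\leq_{\alpha+1}Y$. Granting this, equation~(\ref{heads}) gives $\glp\vdash A_k\to r_\zeta(A_k)$, whence $r_\zeta(A_k)\leq_{\alpha+1}A_k\leq_{\alpha+1}A_{k-1}$ by transitivity, closing the argument. To prove the lemma I would use that $<_{\alpha+1}$ linearly orders $S_{\alpha+1}$ modulo provable equivalence: assuming instead $Y<_{\alpha+1}X$, i.e.\ $\glp\vdash X\to\langle\alpha+1\rangle Y$, monotonicity of $\langle\alpha+1\rangle$ applied to $Y\to X$ yields $\glp\vdash\langle\alpha+1\rangle Y\to\langle\alpha+1\rangle X$, so $\glp\vdash X\to\langle\alpha+1\rangle X$, i.e.\ $\glp\vdash[\alpha+1]\neg X\to\neg X$; Löb's theorem then forces $\glp\vdash\neg X$, contradicting the consistency of the worm $X$. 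The only subtlety worth flagging is the bookkeeping at the edges, namely empty blocks $A_i=\lambda$ and an empty remainder $r_\zeta(A_k)=\lambda$, but these are absorbed smoothly since $\lambda$ is $\leq_{\alpha+1}$-minimal and the Löb argument applies uniformly, even in the degenerate case $X=\top$ where it reduces to the consistency of \glp itself.
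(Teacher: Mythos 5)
Your proof is correct, and it in fact covers more ground than the paper's own argument, which treats only the head: there, one enumerates the distinct modalities $\xi_0,\ldots,\xi_n$ occurring in $A$ in increasing order, shows by an easy induction that each $h_{\xi_i}(A)\in\BNF$, and observes that for arbitrary $\zeta$ either $h_\zeta(A)=h_{\xi_i}(A)$ for the least $\xi_i\geq\zeta$ or $h_\zeta(A)=\lambda$; the claim $r_\zeta(A)\in\BNF$ is stated but left unproved. Your induction on the length of $A$ with the split $\zeta\leq\alpha$ versus $\zeta>\alpha$ recovers the head case by essentially the same structural insight (for $\zeta>\min(A)$ everything happens inside the leftmost block $A_k$, which is strictly shorter since $k\geq 2$), but the payoff of your route is the remainder: you correctly identify that the only non-inherited \BNF condition for $r_\zeta(A)=r_\zeta(A_k)\,\alpha\,A_{k-1}\,\alpha\ldots\alpha\,A_1$ is the comparison $r_\zeta(A_k)\leq_{\alpha+1}A_{k-1}$, and your auxiliary lemma --- that $\glp\vdash Y\to X$ with $X,Y\in S_{\alpha+1}$ forces $X\leq_{\alpha+1}Y$, proved from linearity of $<_{\alpha+1}$ on $S_{\alpha+1}$ modulo provable equivalence together with monotonicity of $\langle\alpha+1\rangle$, L\"ob's rule and the consistency of worms --- is sound and combines with \eqref{heads} (which gives $\glp\vdash A_k\to r_\zeta(A_k)$) and transitivity exactly as you describe; the degenerate cases with empty blocks are handled as you note. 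The one point worth making explicit is that your lemma delivers $r_\zeta(A_k)\leq_{\alpha+1}A_{k-1}$ only modulo provable equivalence, whereas Definition \ref{defbnf} is naturally read syntactically; since both worms are already known to be in \BNF at that stage, uniqueness of \BNF representatives upgrades equivalence to identity, a bookkeeping step at the same level of rigor the paper itself employs.
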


\proof
We prove here the $h_\zeta(A)$ case. For ${A} = \lambda$ this is clear. Thus, let the symbols in $A$ be enumerated in increasing order by ${\x}_0, \ldots,{\x}_n$. By an easy induction on $n$ we see that each $h_{{\x}_i}({A}) \in$ {\BNF}. 
If ${\x}_n>\zeta\notin A$, then $h_\zeta({A})= h_{\min\{ {\x}_i {\mid} {\x}_i >\zeta \}}({A})$. If $\zeta>{\x}_n$, then $h_\zeta({A})={\lambda}$ which is in {\BNF}.
\qed

\begin{lemma}\label{lemma:reducingGeneralOrderToSpecialOrder}
Let $A$ and $B$ be worms. We have that
\[(A \to \<  {\x} \> B) \Leftrightarrow [ (h_{\x}(A) \to \< {\x}\> h_{\x}(B)) \wedge (A \to r_{\x}(B))].
\]
\end{lemma}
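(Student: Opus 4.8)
The heart of the matter is an absorption identity that lets one pull the $\x$-remainder of $B$ out from under $\langle\x\rangle$. The plan is to first establish
\[
\glp\vdash\langle\x\rangle B\ \leftrightarrow\ \big(\langle\x\rangle h_{\x}(B)\wedge r_{\x}(B)\big)\tag{$\star$}
\]
as follows. If $r_{\x}(B)=\lambda$ then $B=h_{\x}(B)$ and $(\star)$ is trivial; otherwise $r_{\x}(B)$ has leading modality $\gamma<\x$, so applying Lemma~\ref{lemma:basicLemma}(\ref{lemma:basicLemma1}) with $\alpha=\x$, $\phi=h_{\x}(B)$ and $\langle\beta\rangle B:=r_{\x}(B)$ gives $\langle\x\rangle h_{\x}(B)\wedge r_{\x}(B)\leftrightarrow\langle\x\rangle(h_{\x}(B)\wedge r_{\x}(B))$, and then \eqref{heads} together with normality of $\langle\x\rangle$ turns the right-hand side into $\langle\x\rangle B$. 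The degenerate case $h_{\x}(B)=\lambda$ is covered by the same lemma with $\phi=\top$.

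Granting $(\star)$, the biconditional of the lemma splits cleanly into one easy half and one genuine half. For the $\Leftarrow$ direction I would argue: from $\glp\vdash h_{\x}(A)\to\langle\x\rangle h_{\x}(B)$ and $\glp\vdash A\to h_{\x}(A)$ (which holds by \eqref{heads}) we get $\glp\vdash A\to\langle\x\rangle h_{\x}(B)$; conjoining this with the assumed $\glp\vdash A\to r_{\x}(B)$ and reading $(\star)$ backwards yields $\glp\vdash A\to\langle\x\rangle B$. For the $\Rightarrow$ direction, $(\star)$ lets me read off both $\glp\vdash A\to r_{\x}(B)$ (the second conjunct of the target, for free) and $\glp\vdash A\to\langle\x\rangle h_{\x}(B)$; what remains is to replace $A$ by $h_{\x}(A)$ in the latter. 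Thus everything reduces to the single statement
\[
\text{for }k\in S_{\x}:\quad \glp\vdash A\to\langle\x\rangle k\ \Longleftrightarrow\ \glp\vdash h_{\x}(A)\to\langle\x\rangle k,
\]
applied with $k=h_{\x}(B)$. I emphasize that this is a \emph{meta}-level equivalence of derivability, not a \glp-provable equivalence of formulas: the implication $h_{\x}(A)\to A$ is false in general, so the formula-level biconditional with $h_{\x}(A)$ in place of $A$ does not hold.

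The easy half of this crux is weakening (using $\glp\vdash A\to h_{\x}(A)$), and the remaining half is what I expect to be the main obstacle: showing that conjoining the ``low'' worm $r_{\x}(A)$ to $A$ cannot create new consequences of the ``high'' form $\langle\x\rangle k$ with $k\in S_{\x}$. Writing it contrapositively, I must show that if $h_{\x}(A)\wedge[\x]\neg k$ is \glp-consistent then so is $A\wedge[\x]\neg k$; that is, the $\x$-remainder can always be consistently appended. The feature I would exploit is that $r_{\x}(A)$ is \emph{$\x$-transparent}: since its leading modality $\gamma$ satisfies $\gamma<\x$, the axiom $\langle\gamma\rangle\psi\to[\x]\langle\gamma\rangle\psi$ gives $\glp\vdash r_{\x}(A)\to[\x]\,r_{\x}(A)$. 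I would then run an induction on $r_{\x}(A)$, at each step pushing the low part inside the leading (high) modality of $h_{\x}(A)$ via Lemma~\ref{lemma:basicLemma}(\ref{lemma:basicLemma1}) and the monotonicity and normality of the $[\,\cdot\,]$'s; alternatively, one can invoke linearity of $<_{\x}$ on $S_{\x}$ (Lemma~\ref{theorem:oDefinesAnIsomorphism}) to recast $h_{\x}(A)\to\langle\x\rangle k$ purely as the head-comparison $k<_{\x}h_{\x}(A)$ and show that realizing the low tail below such a configuration never destroys consistency. Pinning down this appending argument is the delicate point; the rest is bookkeeping around $(\star)$.
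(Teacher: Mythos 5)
Your scaffolding is sound as far as it goes: your $(\star)$ is just \eqref{heads} pushed under the diamond via Lemma~\ref{lemma:basicLemma}.\ref{lemma:basicLemma1}, your $\Leftarrow$ direction matches the paper's, and your extraction of $A\to r_{\xi}(B)$ from the hypothesis is done the same way there. But the crux you isolate --- that $\glp\vdash A\to\langle\xi\rangle k$ implies $\glp\vdash h_{\xi}(A)\to\langle\xi\rangle k$ for $k\in S_{\xi}$ --- is exactly where your proof stops, and both routes you sketch for it are problematic. The contrapositive ``appending'' strategy (from consistency of $h_{\xi}(A)\wedge[\xi]\neg k$ infer consistency of $A\wedge[\xi]\neg k$) would require constructing a model or consistent extension realizing the low tail below a given configuration; this is not an off-the-shelf move, since \glp is not Kripke complete and exact models of $\glp^0$ are a substantial construction in their own right, so making this grafting precise is essentially as hard as the lemma itself. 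The induction ``pushing the low part inside the leading high modality'' goes in the wrong direction: Lemma~\ref{lemma:basicLemma}.\ref{lemma:basicLemma1} lets you absorb low conjuncts \emph{into} a diamond, but your task is to \emph{delete} the hypothesis $r_{\xi}(A)$ from a derivation, which no amount of absorption accomplishes.

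The missing idea is that no consistency-preservation argument is needed: linearity of $<_{\xi}$ on $S_{\xi}$ (which you mention but do not exploit) together with irreflexivity settles the crux purely syntactically, and this is how the paper closes the $\Rightarrow$ direction. Since $h_{\xi}(A),k\in S_{\xi}$, modulo provable equivalence exactly one of $h_{\xi}(A)\leftrightarrow k$, \ $k\to\langle\xi\rangle h_{\xi}(A)$, \ $h_{\xi}(A)\to\langle\xi\rangle k$ holds. In the first case the hypothesis $A\to\langle\xi\rangle k$ yields $A\to\langle\xi\rangle h_{\xi}(A)$; in the second it yields $A\to\langle\xi\rangle\langle\xi\rangle h_{\xi}(A)\to\langle\xi\rangle h_{\xi}(A)$ by transitivity. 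Either way, conjoining the trivial $A\to r_{\xi}(A)$ and absorbing via Lemma~\ref{lemma:basicLemma}.\ref{lemma:basicLemma1} gives $\glp\vdash A\to\langle\xi\rangle\bigl(h_{\xi}(A)\wedge r_{\xi}(A)\bigr)$, i.e.\ $\glp\vdash A\to\langle\xi\rangle A$, whence $\glp\vdash\neg A$ by L\"ob's rule --- contradicting the consistency of worms (this is the ``irreflexivity of $<_{\xi}$'' the paper invokes). Hence the third case must hold, which with $k=h_{\xi}(B)$ is the desired head comparison. Your framing of the problem is correct and your observation that the reduction is meta-level (since $h_{\xi}(A)\to A$ fails) is exactly right, but without this trichotomy-and-irreflexivity step the proof is incomplete.
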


\proof
``$\Rightarrow$" By \ref{heads}, $B \leftrightarrow h_{\x}(B) \wedge r_{\x}(B)$ whence $A \to r_{\x}(B)$ as
\[
\begin{array}{llll}
A &\to& \<  {\x} \> B& \\
 & \to& \<  {\x} \> (h_{\x}(B) \wedge r_{\x}(B)) & \mbox{by Lemma \ref{lemma:basicLemma}.\ref{lemma:basicLemma2}}\\
 & \to &  r_{\x}(B) \wedge \<  {\x} \> h_{\x}(B) & \\
  & \to &  r_{\x}(B).
\end{array}
\]
Likewise $A \leftrightarrow h_{\x}(A) \wedge r_{\x}(A)$. As $h_{\x}(A), h_{\x}(B) \in S_{\x}$ we know that either
\begin{itemize}
\item $h_{\x}(A) \leftrightarrow h_{\x}(B)$,
\item $h_{\x}(B) \to \<\x\> h_{\x}(A)$ or,
\item $h_{\x}(A)\to \< \x \> h_{\x}(B)$.
\end{itemize}
By assumption $A \to \<  \x \> B$ whence $A \to \<  \x \> h_{\x}(B) \wedge r_{\x}(B)$. 

Suppose now $h_{\x}(A) \leftrightarrow h_{\x}(B)$. Then,
\[h_{\x}(A) \wedge r_{\x}(A) \to \<\x\> h_{\x}(A) \wedge r_{\x}(A)\]
whence also
\[h_{\x}(A) \wedge r_{\x}(A) \to \<\x\> (h_{\x}(A) \wedge r_{\x}(A)).\]
The latter is equivalent to $A \to \< \x\> A$ which contradicts the  irreflexivity of $<_{\x}$.

By a similar argument, the assumption that $h_{\x}(B) \to \<\x\> h_{\x}(A)$ contradicts the irreflexivity of $<_{\x}$ and we conclude that $h_{\x}(A)\to \< \x \> h_{\x}(B)$.

``$\Leftarrow$" This is the easier direction.
\[
\begin{array}{lll}
A & \leftrightarrow & h_{\x}(A) \wedge r_{\x}(A)\\
\ &\to & \< \x \> h_{\x}(B) \wedge r_{\x}(B)\\
\ &\to & \<\x\> ( h_{\x}(B) \wedge r_{\x}(B))\\
\ &\to & \< \x\> B.
\end{array}
\]

\qed
In the right hand side of this Lemma \ref{lemma:reducingGeneralOrderToSpecialOrder} we see that the first conjunct $(h_{\x}(A) \to \< {\x}\> h_{\x}(B))$ is only referring to worms in $S_\x$ and their $<_\x$ relations. The worm $r_{\x}(B)$ starts with a modality strictly less than $\x$ and thus  the second conjunct $(A \to r_{\x}(B))$ of the lemma can be settled by calling recursively to the lemma once more. Thus, indeed,  Lemma \ref{lemma:reducingGeneralOrderToSpecialOrder} recursively reduces the general $<_{\x}$ question between worms, to the $<_{\x}$ questions between worms in $S_{\x}$. 

\begin{cor}\label{theorem:UnrestrictedOrderIsWellFounded}
The relation $<_\alpha$ on $S \times S$ is well-founded.
\end{cor}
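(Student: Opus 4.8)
The plan is to deduce well-foundedness of $<_\alpha$ on the full class $S$ from the well-foundedness of $<_\alpha$ on the restricted domain $S_\alpha$, using the reduction supplied by Lemma \ref{lemma:reducingGeneralOrderToSpecialOrder}. Recall first that on $S_\alpha$ the relation $<_\alpha$ is a well-order: by Lemma \ref{theorem:IsomorphicWormFragments} the map $\alpha{\uparrow}$ is an isomorphism between $(S,<)$ and $(S_\alpha,<_\alpha)$, and by Lemma \ref{theorem:oDefinesAnIsomorphism} the function $o$ identifies $(\B,<_0)$ order-isomorphically with the ordinals; hence $(S_\alpha,<_\alpha)$ admits no infinite descending sequence.

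First I would argue by contradiction. Suppose $(A_n)_{n\in\omega}$ were an infinite $<_\alpha$-descending sequence in $S$, so that $\glp \vdash A_n \to \langle \alpha \rangle A_{n+1}$ for every $n$. Applying Lemma \ref{lemma:reducingGeneralOrderToSpecialOrder} with its $A$ instantiated as $A_n$, its $B$ as $A_{n+1}$, and $\x$ as $\alpha$, the displayed equivalence expresses precisely $A_{n+1}<_\alpha A_n$, and its first conjunct reads $h_\alpha(A_n) \to \langle \alpha \rangle h_\alpha(A_{n+1})$, that is, $h_\alpha(A_{n+1}) <_\alpha h_\alpha(A_n)$. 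Since each head $h_\alpha(A_n)$ lies in $S_\alpha$, the sequence $\big(h_\alpha(A_n)\big)_{n\in\omega}$ is an infinite $<_\alpha$-descending sequence \emph{inside} $S_\alpha$, contradicting the well-order just recalled. This yields the corollary.

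The step I expect to carry the weight is the observation that the head-comparison delivered by Lemma \ref{lemma:reducingGeneralOrderToSpecialOrder} is genuinely \emph{strict}. This is the crucial point: a priori the reduction could have produced only a non-strict comparison of heads, leaving all of the decrease to the remainder conjunct $A_n \to r_\alpha(A_{n+1})$, in which case the heads alone would not certify well-foundedness. It is exactly the irreflexivity argument embedded in the proof of Lemma \ref{lemma:reducingGeneralOrderToSpecialOrder}---the one excluding the case $h_\alpha(A_n)\leftrightarrow h_\alpha(A_{n+1})$, which would force $A_n \to \langle\alpha\rangle A_n$---that guarantees strictness. Consequently the second conjunct plays no role whatsoever in this particular argument: the first conjunct already suffices. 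Once strictness is secured, composing with $o_\alpha$ turns the chain of heads into a strictly decreasing $\omega$-sequence of ordinals, the desired impossibility.
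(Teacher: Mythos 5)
Your proof is correct and follows essentially the same route as the paper: the paper's own one-line argument likewise applies Lemma \ref{lemma:reducingGeneralOrderToSpecialOrder} to transform a hypothetical $<_\alpha$-descending chain in $S$ into a $<_\alpha$-descending chain of heads in $S_\alpha$, where well-foundedness is already known. Your additional remark that the head comparison in the first conjunct is genuinely strict (secured by the irreflexivity argument inside the lemma's proof) correctly identifies the point the paper's terse proof leaves implicit.
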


\begin{proof}
Any $<_\alpha$ descending chain $\ldots <_\alpha A_2 <_\alpha A_1 <_\alpha A_0$ in $S$ yields a corresponding chain $\ldots <_\alpha h_\alpha(A_2) <_\alpha h_\alpha(A_1) <_\alpha h_\alpha(A_0)$ in $S_\alpha$ by Lemma \ref{lemma:reducingGeneralOrderToSpecialOrder}. 
\end{proof}

Now that we have established the well-foundedness of $<_\alpha$ on $S \times S$ we see that Definition \ref{definition:OmegaOrder} is indeed well-defined. Moreover, we may now perform induction on $\Omega_\xi$.

\subsection{Antichains}\label{section:antiChains}

It is easy to see that $<_{\x}$ is not tree-like; for example, we see that both $011<_1 10111 <_1 1111$ and $011<_1 11011 <_1 1111$ while $10111$ and $11011$ are $<_1$ incomparable.

A set of elements $\{  A_i\}_{i<\zeta}$ is called an \emph{anti-chain} for $<\alpha$ if for all $i\neq j$ we have that $A_i$ and $A_j$ are $<_\alpha$-incomparable. That is, $A_i \not \leq A_j$ and $A_j \not < A_i$. An ordered set $\la X, \prec\ra$ is called a \emph{well-quasi order} if $X$ contains no infinite anti-chains for $\prec$.

For $\alpha >0$, we have that $<_\alpha$ does not define a well-quasi-ordering on $S$. For example, all elements $\{ \langle \beta \rangle \top \mid \beta < \alpha \}$ are mutually $<_\alpha$ incomparable yielding us an infinite anti-chain. A natural questions to study for the $<_\alpha$ orderings on $S\times S$ concerns the $<_0$ length of anti-chains. So, given a worm $A$, we can consider sets $X_i = \{  B \mid B <_\alpha A \}$ so that all elements in $X_i$ are mutually $<_\alpha $-incomparable. The question arises, what is $\sup_i{\sf ot}(X_i, <_0)$? Or more in general, what is $\sup_i{\sf ot}(X_i, <_\beta)$ for $\beta<\alpha$?

More generally, we can ask for the supremum of order-types of the $<_\alpha$ anti-chains that lie in between two $<_\alpha$ comparable elements. For example, the set $\{ 101, 10101, 1010101, \ldots\}$ defines an $<_1$ anti-chain of $<_0$ order-type $\omega$ between $1$ and $11$.

It is important to somehow bound where the anti-chain can reside, if not we get anti-chains of arbitrary length. For example,  $\{10\alpha \mid \alpha \in {\sf On}\setminus \{ 0\} \}$ defines an anti-chain w.r.t.\ the $<_1$ order that is unbounded in the $<_0$ order.

Currently it is not clear how to give a sensible arithmetical interpretation of anti-chains in the Japaridze algebra (if possible at all). We shall briefly outline here that anti-chains do not yield sequences of mutually non-interpretable sentences and refer the reader to for example \cite{Joosten:2004:InterpretabilityFormalized} or \cite{visser:1997:OverviewIL} for details. Basically this is due to the effect that interpretability and $\geq_0$ coincide on the class of worms. Let us first fix some notation. By $A\rhd B$ we denote that $T+A$ interprets $T+B$. That is, there is some structure preserving translation $j$ that maps symbols of $T$ to formulas of arithmetic which transforms every $T+B$ theorem into a $T+A$ theorem:
\[
A\rhd B \ \ := \ \ \exists j \, \forall \phi\  (\Box_{T+B}\phi \to \Box_{T+A}\phi^j).
\]

\begin{lemma}\label{bla}
For any pair of worms $A$ and $B$ and consistent base theory $T$ w.r.t.\ which \glp is sound, we have 
\[
A \rhd B \ \ \Longleftrightarrow \ \ A\geq B.
\]
\end{lemma}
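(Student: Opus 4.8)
The plan is to prove both directions of the biconditional $A \rhd B \Leftrightarrow A \geq B$, where $\geq$ is shorthand for the ordering $\geq_0$ (equivalently $\geq$, by the convention that $<_0$ and $o_0$ are written $<$ and $o$). Throughout I would work with the arithmetical interpretation of worms, using that $\glp_\omega$ is sound and complete for the provability reading, so that $A \geq B$ means $T + A \vdash B$ modulo the translation, i.e. consistency strength. The key arithmetical facts I would invoke are: (i) on the class of worms, the ordering $\geq$ coincides with provable implication, so $A \geq B$ iff $\glp \vdash A \to B$ (at least $A$ proves the $0$-consistency chain giving $B$); and (ii) the standard characterizations of interpretability over sequential theories in terms of $\Pi_1$-conservativity, as developed in the references \cite{Joosten:2004:InterpretabilityFormalized, visser:1997:OverviewIL}.

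For the direction $A \geq B \Rightarrow A \rhd B$, I would argue that if $A \geq B$ then $T + A$ proves (the arithmetization of) $B$, so $T+A$ proves every $T+B$-theorem outright via the identity translation; hence $A \rhd B$ holds trivially with $j$ the identity interpretation. The only subtlety is to make sure that $A \geq B$ genuinely yields $\Box_{T+B}\phi \to \Box_{T+A}\phi$ for all $\phi$: since $A \geq B$ gives $\glp \vdash A \to B$, arithmetical soundness of \glp over $T$ gives $T \vdash A^* \to B^*$ for the arithmetical realizations, and then $T + A^* \vdash B^*$, so any $T+B$ proof can be reconstructed inside $T+A$. This is the easy direction.

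The substantive direction is $A \rhd B \Rightarrow A \geq B$, which I would prove by contraposition. Suppose $A \not\geq B$; since worms are linearly ordered by consistency strength modulo provable equivalence (as recalled in the discussion preceding the $<_\alpha$ definitions, and made precise by Lemma \ref{theorem:oDefinesAnIsomorphism}), either $A < B$ or $A \equiv B$. The equivalence case is benign, so the real work is $A < B$, i.e. $B \to \langle 0 \rangle A$, meaning $T + B$ proves $\mathsf{Con}(T + A)$. I would then invoke the arithmetical principle that over a reasonable base theory, $X \rhd Y$ implies $\Box_{T+X}\mathsf{Con}(T+Y)$ is \emph{not} available for free — more precisely, interpretability of $T+B$ in $T+A$ together with the formalized Orey--H\'ajek characterization forces $T + A$ to prove the $\Pi_1$-consequences of $T+B$, hence $T+A \vdash \mathsf{Con}(T+A)$, contradicting G\"odel's Second Incompleteness Theorem applied to the consistent theory $T+A$. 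The clean way to package this is: $A \rhd B$ implies (by Orey--H\'ajek, since $T+A, T+B$ are essentially reflexive sequential extensions) that $T+A$ proves every $\Pi_1$-consequence of $T+B$; but $A < B$ gives that $\mathsf{Con}(T+A)$ is a $\Pi_1$-consequence of $T+B$; together these yield $T + A \vdash \mathsf{Con}(T+A)$, impossible.

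The main obstacle, and the step deserving the most care, is the appeal to the Orey--H\'ajek characterization: I must verify that the side-conditions on $T$ (sequentiality, essential reflexivity, or at least that the relevant fragment of $\glp_\omega$ is arithmetically sound and complete over $T$) are exactly those already imposed for Propositions \ref{theorem:reductionProperty} and \ref{theorem:generalizedReductionLemma}, and that $T+A$ and $T+B$ inherit these properties for arbitrary worms $A, B$. I would state explicitly which formalized version of Orey--H\'ajek I am using and cite \cite{Joosten:2004:InterpretabilityFormalized, visser:1997:OverviewIL} for the equivalence of interpretability with $\Pi_1$-conservativity in this setting. Modulo that citation, both directions are short, and the lemma reduces to the interplay between $<_0$ (consistency strength) and $\Pi_1$-conservative interpretability together with G\"odel's Second Incompleteness Theorem.
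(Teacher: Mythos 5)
Your direction $A\geq B\Rightarrow A\rhd B$ rests on a false identification. You claim that on worms the ordering $\geq$ coincides with provable implication, so that $A\geq B$ gives $T+A\vdash B$ and the identity translation witnesses $A\rhd B$. But $A>_0 B$ means $\glp\vdash A\to\langle 0\rangle B$, and $\langle 0\rangle B\to B$ is \emph{not} a theorem of \glp for worms $B$ in general: for instance $\langle 0\rangle\langle 1\rangle\top >_0 \langle 1\rangle\top$, yet $\glp\nvdash \langle 0\rangle\langle 1\rangle\top\to\langle 1\rangle\top$ (arithmetically, the $\Pi_1$ sentence $\mathsf{Con}(T+\langle 1\rangle_T\top)$ cannot imply the $\Pi_3$ sentence $\langle 1\rangle_T\top$; the implication also fails in Ignatiev's model). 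So $T+A$ need not prove $B$, the identity interpretation is unavailable, and your ``easy direction'' does not go through as written. The missing idea --- and the paper's actual argument --- is the arithmetized completeness theorem: from $T\vdash A\to\Diamond B$ one formalizes the Henkin construction (already in $I\Delta_0+\exp$) so that $\Diamond B$ defines an internal model of $T+B$, and that model supplies the translation $j$. The implication $\glp\vdash A\to B$ gives $A\geq B$, but emphatically not conversely, which is exactly why interpretability rather than deductive containment is the right notion here.

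Your converse direction is sound in spirit but imports a hypothesis the lemma does not grant. The Orey--H\'ajek equivalence of interpretability with $\Pi_1$-conservativity requires the interpreting theory to be sequential and \emph{essentially reflexive} (e.g.\ an extension of \pa), whereas the lemma assumes only a consistent base theory w.r.t.\ which \glp is sound --- EA-style theories qualify but are not essentially reflexive. You flag this obstacle yourself but do not discharge it. The paper avoids it entirely: from $B>A$ one gets $T\vdash B\to\Diamond A$, hence $B\rhd\Diamond A$ via the identity interpretation (legitimate here, since $T+B$ literally proves $\Diamond A$), and transitivity of $\rhd$ yields $A\rhd\Diamond A$, contradicting Feferman's generalization of G\"odel's Second Incompleteness Theorem that no consistent theory interprets its own consistency. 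That argument needs neither reflexivity nor $\Pi_1$-conservativity and so matches the stated generality; your route reaches the same incompleteness kernel by a longer detour that is only valid over essentially reflexive bases. (A minor slip besides: from $A\not\geq B$ and linearity you should conclude $B>A$ outright; the case $A\equiv B$ would already give $A\geq B$.)
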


\proof
The case that $A=B$ is trivial so we may assume them different.
If $T \vdash A \to \Diamond B$, then we can formalize (already within in rather weak theories like $\idel{0} + \exp$) the Henkin construction so that $\Diamond B$ defines an internal model of $T+B$. This model in its turn defines the translation $j$, so that we obtain $A\rhd B$. 

Suppose now $A\rhd B$ but $\neg (A \geq B)$. By linearity of $<_0$ we would get $B>A$, whence $T\vdash B \to \Diamond A$. Now, using the identity interpretation, we see that $B\rhd \Diamond A$. By transitivity of interpretability, we get $A\rhd \Diamond A$ which contradicts Feferman's generalization of G\"odel's Second Incompleteness Theorem to the effect that no consistent theory can interpret its own consistency.
\qed

One could easily define a generalized notion of interpretability
\[
A\rhd_n B \ \ := \ \ \exists j \, \forall \phi \ ([n]_{T+B}\phi \to [n]_{T+A}\phi^j)
\]
but it is not clear whether $\rhd_n$ would coincide with $\geq_n$ on the class of all worms.

\section{Omega sequences}\label{section:OmegaSequences}

In this section we give a full characterization of the sequences $\vec{\Omega}({A})$; that is, we shall determine for given $A$ each of the values $\Omega_\xi(A)$ and classify at what coordinates $\xi$ the $\vec{\Omega}({A})$ sequence chan\-ges value. 

\subsection{Basic properties of omega sequences}
Clearly, $\vec{\Omega}({A})$ defines a weakly decreasing sequence of ordinals.

\begin{lemma}\label{theorem:WeaklyDecreasingSequences}
For ${\x}<\zeta$ we have that $\Omega_{\x}({A})\geq \Omega_\zeta({A})$.
\end{lemma}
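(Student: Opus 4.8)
The plan is to reduce everything to a single order-theoretic fact: the relations $<_\zeta$ shrink as $\zeta$ grows. Concretely, whenever $\xi \leq \zeta$, every $<_\zeta$-predecessor of a worm is also a $<_\xi$-predecessor. This is immediate from the \glp axiom $[\xi]\psi \to [\zeta]\psi$, which dualizes to $\langle \zeta \rangle \psi \to \langle \xi \rangle \psi$; hence if $B <_\zeta A$, i.e.\ $\glp \vdash A \to \langle \zeta\rangle B$, then also $\glp\vdash A \to \langle \xi\rangle B$, so $B <_\xi A$. In short $<_\zeta\ \subseteq\ <_\xi$ as relations on $S\times S$, so any $<_\zeta$-chain below $A$ is in particular a $<_\xi$-chain below $A$.

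With this in hand I would prove $\Omega_\xi(A) \geq \Omega_\zeta(A)$ by transfinite induction on $A$ along the relation $<_\zeta$, which is well-founded by Corollary \ref{theorem:UnrestrictedOrderIsWellFounded}. Assume as induction hypothesis that $\Omega_\xi(B) \geq \Omega_\zeta(B)$ for every $B <_\zeta A$. Unfolding Definition \ref{definition:OmegaOrder}, $\Omega_\zeta(A) = \sup_{B <_\zeta A}(\Omega_\zeta(B)+1)$. For a fixed $B <_\zeta A$ the induction hypothesis gives $\Omega_\zeta(B)+1 \leq \Omega_\xi(B)+1$; and since $B <_\zeta A$ entails $B <_\xi A$ by the inclusion above, $B$ is among the worms over which the supremum defining $\Omega_\xi(A)$ is taken, so $\Omega_\xi(B)+1 \leq \Omega_\xi(A)$. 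Chaining these two inequalities yields $\Omega_\zeta(B)+1 \leq \Omega_\xi(A)$ for every $B <_\zeta A$, and taking the supremum over all such $B$ gives $\Omega_\zeta(A) \leq \Omega_\xi(A)$, as desired.

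There is essentially no hard step here; the only delicate points are bookkeeping. First, one must justify that the induction is legitimate, which is exactly where Corollary \ref{theorem:UnrestrictedOrderIsWellFounded} is invoked, to guarantee that $<_\zeta$ is well-founded on all of $S$ rather than merely on $S_\zeta$. Second, the base case $\{B : B <_\zeta A\} = \varnothing$ is handled uniformly, since there $\Omega_\zeta(A) = \sup\varnothing = 0 \leq \Omega_\xi(A)$, which the empty supremum already covers. If one prefers to bypass the induction, the same conclusion follows from the observation that $\Omega_\alpha(A)$ is precisely the supremum of order-types of $<_\alpha$-chains below $A$, combined with the inclusion $<_\zeta\ \subseteq\ <_\xi$; but the inductive route is the most self-contained and is the one I would present.
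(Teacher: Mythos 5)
Your proof is correct and takes essentially the same route as the paper's: both rest on the inclusion $\{B \mid B <_\zeta A\} \subseteq \{B \mid B <_\xi A\}$, obtained by dualizing the monotonicity axiom, combined with a well-founded induction to compare the two suprema defining $\Omega_\xi(A)$ and $\Omega_\zeta(A)$. The only cosmetic difference is that you induct along $<_\zeta$ whereas the paper inducts on the ordinal $\Omega_\xi(A)$; the substance of the argument is identical.
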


\proof
By induction on $\Omega_\x (A)$ we see that
\[
\begin{array}{lll}
\Omega_\x (A) & := & \sup \{ \Omega_\x (B) +1 \mid B <_\x A \}\\
 & \geq_{\sf IH} & \sup \{ \Omega_\zeta (B) +1 \mid B <_\x A \}\\
  & \geq & \sup \{ \Omega_\zeta (B) +1 \mid B <_\zeta A \}\\
  &= & \Omega_\zeta (A).
\end{array}
\]

Note that we have the last inequality since $\{ B \mid B <_\zeta A \} \subseteq \{ B \mid B <_\x A \}$ (because for ${\x}<\zeta$ we have  $\vdash {A} \to \langle \zeta \rangle {B}$ implies $\vdash  {A} \to \langle {\x} \rangle {B}$). 
\qed
In particular, since the omega sequences are weakly decreasing on the ordinals, we have that $\{ \Omega_\xi(A) \mid \xi \in {\sf Ord} \}$ is a finite set for any worm $A$.

\begin{lemma}\label{theorem:OmegaReducesToO}
$\Omega_{\x}(A) = o_{\x}h_{\x}(A)$
\end{lemma}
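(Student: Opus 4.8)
The goal is to prove that $\Omega_{\x}(A) = o_{\x}h_{\x}(A)$, relating the generalized (non-linear) order-type of a worm to the linear order-type of its $\x$-head. The plan is to proceed by induction on $\Omega_\x(A)$, which is legitimate now that Corollary \ref{theorem:UnrestrictedOrderIsWellFounded} has established that $<_\x$ is well-founded on $S\times S$. The key structural tool will be Lemma \ref{lemma:reducingGeneralOrderToSpecialOrder}, which decomposes the relation $B <_\x A$ (i.e.\ $A \to \<\x\> B$) into the conjunction of a head-condition $h_\x(A) \to \<\x\> h_\x(B)$ in $S_\x$ and a remainder-condition $A \to r_\x(B)$.

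First I would unfold the definitions on both sides. On the left, $\Omega_\x(A) = \sup_{B <_\x A}(\Omega_\x(B)+1)$; on the right, since $h_\x(A) \in S_\x$, we have $o_\x h_\x(A) = \sup\{o_\x(C)+1 : C \in S_\x,\ C <_\x h_\x(A)\}$. The strategy is to exhibit a correspondence between the worms $B <_\x A$ contributing to the left-hand supremum and the worms $C <_\x h_\x(A)$ in $S_\x$ contributing to the right-hand supremum, such that the ordinal contributions match. The natural candidate for this correspondence is the head map $B \mapsto h_\x(B)$: by Lemma \ref{lemma:reducingGeneralOrderToSpecialOrder}, if $B <_\x A$ then $h_\x(B) <_\x h_\x(A)$ (the head conjunct), and the induction hypothesis applied to $B$ gives $\Omega_\x(B) = o_\x h_\x(B)$. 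This immediately yields the inequality $\Omega_\x(A) \leq o_\x h_\x(A)$, since every contribution $\Omega_\x(B)+1 = o_\x h_\x(B)+1$ on the left is matched by the contribution $o_\x(h_\x(B))+1$ of the witness $h_\x(B) <_\x h_\x(A)$ on the right.

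For the reverse inequality $o_\x h_\x(A) \leq \Omega_\x(A)$, I would take an arbitrary $C \in S_\x$ with $C <_\x h_\x(A)$ and produce a worm $B <_\x A$ with $h_\x(B) = C$ (or at least $\Omega_\x(B) \geq o_\x(C)$). The natural choice is $B := C \ast r_\x(A)$, i.e.\ prepend $C$ to the $\x$-remainder of $A$. Then $h_\x(B) = C$ since every symbol of $C$ is $\geq \x$ and the first symbol of $r_\x(A)$ is $<\x$, and $r_\x(B) = r_\x(A)$. To verify $B <_\x A$, I would apply the ``$\Leftarrow$'' direction of Lemma \ref{lemma:reducingGeneralOrderToSpecialOrder}: the head-condition $C = h_\x(B) <_\x h_\x(A)$ holds by choice of $C$, and the remainder-condition $B \to r_\x(A)$ holds because $r_\x(B) = r_\x(A)$ and $B \to r_\x(B)$ via \eqref{heads}. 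Applying the induction hypothesis to $B$ gives $\Omega_\x(B) = o_\x h_\x(B) = o_\x(C)$, so $C$'s contribution $o_\x(C)+1$ on the right is matched by $B$'s contribution $\Omega_\x(B)+1$ on the left.

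The main obstacle I anticipate is ensuring the induction is correctly grounded and that the two witness constructions stay within the well-founded framework: specifically, confirming that $B = C \ast r_\x(A) <_\x A$ genuinely has strictly smaller $\Omega_\x$-rank than $A$ so the induction hypothesis legitimately applies, and checking the base case where $h_\x(A) = \lambda$ (here no $C <_\x h_\x(A)$ exists, giving $o_\x h_\x(A) = 0$, which must force $\Omega_\x(A) = 0$ as well — this needs the observation that $A \to \<\x\> B$ is impossible when $h_\x(A) = \lambda$, since then $A = r_\x(A)$ begins with a modality below $\x$). A secondary technical point is verifying the head and remainder identities $h_\x(C \ast r_\x(A)) = C$ and $r_\x(C \ast r_\x(A)) = r_\x(A)$ directly from the recursive definition of head and remainder, which is routine but must be stated. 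Apart from these, the argument is a clean bidirectional matching driven entirely by Lemma \ref{lemma:reducingGeneralOrderToSpecialOrder} and the induction hypothesis.
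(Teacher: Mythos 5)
Your proof is correct and is essentially the paper's own argument: the paper packages your two-way matching as the single set identity $\{C\in S_{\x}\mid C<_{\x}h_{\x}(A)\}=\{h_{\x}(B)\mid B<_{\x}A\}$ and then runs the same induction along the well-founded relation $<_{\x}$, with Lemma \ref{lemma:reducingGeneralOrderToSpecialOrder} doing the same work in both write-ups (and with the base case absorbed automatically, since the two sets are empty together). Two small points: for the reverse inequality the paper's witness is simply $B=C$ itself --- since $C\in S_{\x}$ gives $r_{\x}(C)=\lambda$, the remainder condition $A\to r_{\x}(C)$ is vacuous, so your construction $B=C\ast r_{\x}(A)$ and its head/remainder bookkeeping, while sound, are unnecessary; and note that the remainder condition in Lemma \ref{lemma:reducingGeneralOrderToSpecialOrder} is $A\to r_{\x}(B)$, not the $B\to r_{\x}(A)$ you verify --- a harmless slip here, since $r_{\x}(B)=r_{\x}(A)$ and $A\to r_{\x}(A)$ holds by \eqref{heads} applied to $A$, but the implication you need runs in the other direction.
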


\proof
We first see that 
\[
\{ C \in S_\x \mid C <_\x h_\x A \} = \{ h_\x (B) \mid B<_\x A \}. \ \ \ (*)
\]
The inclusion $\subseteq$ is immediate since $h_\x (C) = C$ for $C\in S_\x$. The other direction follows directly from Lemma \ref{lemma:reducingGeneralOrderToSpecialOrder} since $B<_\x A$ implies $h_x(B) <_\x h_\x A$ and clearly $h_x(B)\in S_\x$. Now that we have this equality we proceed by induction and obtain
\[
\begin{array}{lll}
\Omega_\x (A) & := & \sup \{ \Omega_\x (B) +1 \mid B <_\x A \}\\
 & =_{\sf IH} & \sup \{ o_\x h_\x (B) +1 \mid B <_\x A \}\\
 & =_{\mbox{by } (*)} & \sup \{ o_\x (C) +1 \mid C\in S_\x \wedge C <_\x h_\x(A) \}\\
  &= & o_\x h_\x (A).
\end{array}
\]
\qed
As an immediate corollary to this lemma we see that $\Omega_\x(A)$ is actually equal to the supremum of order-types of $<_\x$ chains below $A$. 
\begin{cor}\label{theorem:MaximalCoordinateOfSequences}
For each worm $A\neq \lambda$, there is a maximal $\x$ so that $\Omega_{\x}({A})\neq 0$. In particular we have $\x ={\sf First}({A})$, where ${\sf First}({A})$ is the left-most element of ${A}$.
\end{cor}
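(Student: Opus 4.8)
The plan is to reduce everything to the identity $\Omega_\xi(A) = o_\xi h_\xi(A)$ established in Lemma \ref{theorem:OmegaReducesToO}, and then to track exactly when the right-hand side vanishes. First I would record the auxiliary fact that, for $C \in S_\xi$, one has $o_\xi(C) = 0$ if and only if $C = \lambda$: indeed $o_\xi(\lambda) = \sup\varnothing = 0$ directly from the definition, while any nonempty $C \in S_\xi$ begins with some $\langle \beta \rangle$ with $\beta \geq \xi$, so $C \to \langle \xi \rangle \top$ and hence $\lambda <_\xi C$, which forces $o_\xi(C) \geq 1$. Thus $\Omega_\xi(A) \neq 0$ precisely when $h_\xi(A) \neq \lambda$.

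Next I would characterize when the $\xi$-head is empty purely in terms of the leftmost symbol of $A$. By the recursive definition of $h_\xi$, if $A = \gamma \ast A'$ then $h_\xi(A) = \gamma \ast h_\xi(A')$ when $\gamma \geq \xi$ and $h_\xi(A) = \lambda$ when $\gamma < \xi$. Writing $\gamma = {\sf First}(A)$ for the leftmost element of $A$, this gives $h_\xi(A) \neq \lambda$ if and only if $\xi \leq \gamma$. Combining with the previous paragraph yields $\Omega_\xi(A) \neq 0 \iff \xi \leq {\sf First}(A)$, from which both halves of the claim follow: at $\xi = {\sf First}(A)$ the head still contains the initial $\langle \gamma \rangle$ and so $\Omega_\xi(A) \neq 0$, whereas for every $\xi > {\sf First}(A)$ the head is empty and $\Omega_\xi(A) = o_\xi(\lambda) = 0$. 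Hence ${\sf First}(A)$ is the maximal coordinate carrying a nonzero value.

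The only genuine content beyond unwinding definitions is the small lemma that $o_\xi$ takes the value $0$ only at $\lambda$; everything else is bookkeeping with the head operation. I expect the main (and only minor) obstacle to be the verification that $\lambda <_\xi C$ for every nonempty $C \in S_\xi$, that is, that such $C$ is $\xi$-consistent; this is exactly the dual of the monotonicity axiom $[\xi]\psi \to [\beta]\psi$ for $\xi \leq \beta$ and uses nothing deeper. One could alternatively lean on the weak monotonicity of the sequence (Lemma \ref{theorem:WeaklyDecreasingSequences}) to obtain the \emph{maximal} phrasing once a single nonzero coordinate and a single vanishing coordinate above it are exhibited, but the direct threshold computation above already pins down the exact value $\xi = {\sf First}(A)$.
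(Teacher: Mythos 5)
Your proposal is correct and takes essentially the same route as the paper: both reduce $\Omega_\xi(A)$ to $o_\xi h_\xi(A)$ via Lemma \ref{theorem:OmegaReducesToO} and observe that $h_\xi(A)\neq\lambda$ exactly when $\xi\leq {\sf First}(A)$. The only difference is that you explicitly verify the step the paper dismisses as ``clearly''---that $o_\xi(C)>0$ for nonempty $C\in S_\xi$, via $\lambda<_\xi C$ and the dual monotonicity axiom---which is a harmless (indeed welcome) amplification rather than a deviation.
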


\proof
For ${A} \in S$, we denote by ${\sf First}({A})$ the first element of ${A}$, that is, ${\sf First}(\lambda)= \lambda$, and ${\sf First}(\x{\ast} {B})= \x$. 
Clearly, if $A\neq \lambda$ then $h_{\sf First({A})} ({A}) \neq \lambda$ whence by Lemma \ref{theorem:OmegaReducesToO},
$\Omega_{{\sf First}({A})}({A})\neq 0$.
On the other hand, for $\x> {\sf First}({A})$, clearly $h_{\x}({A}) = \lambda$ whence $\Omega_{\x}({A}) =0$.
\qed

It is good to have reduced $\Omega_{\x}({A})$ to $o_{\x}({A})$ as in Section \ref{section:LinearOrdersOnWorms} we provided a full calculus for the latter (Lemma \ref{theorem:OrderTypeCalculus}).

Lemma \ref{theorem:WeaklyDecreasingSequences} and Corollary \ref{theorem:MaximalCoordinateOfSequences} are first simple observations on $\vec{\Omega}({A})$ sequences. In the remainder of this section we shall provide a full characterization of them.


\subsection{Successor coordinates}


First let us compute $\Omega_{\x+1}(A)$ in terms of $\Omega_\x(A)$. Recall that $\le \alpha$ denotes the unique $\beta$ such that $\alpha = \alpha' + \omega^{\beta}$ for $\alpha >0$. For convenience we define $\le 0 = 0$. The following lemma will be useful:

\begin{lemma}\label{lemma:successorLemma}
Given an ordinal ${\x}$ and a worm ${A}$,
\[o_{{\x}+1}h_{{\x}+1}({A}) = \le o_{\x} h_{\x}({A}).\]
\end{lemma}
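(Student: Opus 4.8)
The claim is $o_{\xi+1} h_{\xi+1}(A) = \ell\, o_\xi h_\xi(A)$, where $\ell\alpha$ is the unique $\beta$ with $\alpha = \alpha' + \omega^\beta$ (and $\ell 0 = 0$). So I need to show that passing from the $\xi$-head to the $(\xi{+}1)$-head corresponds, on the level of order-types, to applying the operation $\ell$. Let me think about what this is really saying.

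The $\xi$-head $h_\xi(A)$ is a worm in $S_\xi$, and $o_\xi h_\xi(A) = o(\xi \downarrow h_\xi(A))$ by Theorem~\ref{theorem:OrderTypeCalculus}.4. Similarly $o_{\xi+1}h_{\xi+1}(A) = o((\xi{+}1)\downarrow h_{\xi+1}(A))$. The key structural fact is that $h_{\xi+1}(A) = h_{\xi+1}(h_\xi(A))$, so I can work entirely inside $B := h_\xi(A) \in S_\xi$. Writing $B = \xi\downarrow B$ after demotion, the worm $B$ (now in $S_0$) decomposes along its $0$-entries as in the calculus: its leading block (entries $\geq 1$, i.e.\ the $1$-head) corresponds to $h_{\xi+1}$, and applying $h_{\xi+1}$ upstairs matches taking the $1$-head downstairs. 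So the whole problem reduces, via the demotion isomorphism, to the case $\xi = 0$.

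**Reduction to the base case $\xi = 0$.** First I would establish $h_{\xi+1}(A) = h_{\xi+1}(h_\xi(A))$ directly from the definition of head (the $(\xi{+}1)$-head of $A$ is an initial segment of the $\xi$-head). Then set $B = h_\xi(A)$ and use $\xi\downarrow$ to transport into $S_0$. Using Lemma~\ref{theorem:uparrowProperties}, demotion commutes appropriately with heads, so it should suffice to prove the ordinal identity
\[
o\, h_1(C) = \ell\, o(C) \qquad \text{for } C \in S_0.
\]
Here I have reduced to $\xi = 0$, replacing $\xi\downarrow B$ by $C$.

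**The base case computation.** Now I apply the calculus of Theorem~\ref{theorem:OrderTypeCalculus}. Write $C = C_n 0 \ldots 0 C_1 \in \B$ in Beklemishev Normal Form with $C_1 \in \B_1$. If $C_1$ is nonempty, then $h_1(C) = C_n$ (the leading block, which is exactly the $1$-head), and the calculus gives
\[
o(C) = \omega^{o(1\downarrow C_1)} + \ldots + \omega^{o(1\downarrow C_n)}.
\]
By the BNF ordering condition the exponents are weakly increasing from right to left in the sense dictated by $\B_1$-comparability, so the \emph{last} summand $\omega^{o(1\downarrow C_n)}$ is the one recording $\ell$: by definition $\ell o(C) = o(1\downarrow C_n) = o_1(C_n) = o_1 h_1(C)$, which after undoing the demotion is exactly $o\,h_1(C)$ in the reduced coordinates. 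The degenerate cases ($C = \lambda$, giving both sides $0$ via $\ell 0 = 0$, and $C_1$ empty) I would check separately but expect to be routine.

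**Main obstacle.** The delicate point is the bookkeeping of the demotion operators and confirming that $h_{\xi+1}$ upstairs really corresponds to taking the $1$-head after demoting by $\xi$ — i.e.\ that $\xi\downarrow$ sends $(\xi{+}1)$-heads to $1$-heads and that $o_{\xi+1}h_{\xi+1}(A) = o\,h_1(\xi\downarrow h_\xi A)$. This requires carefully combining Lemma~\ref{theorem:uparrowProperties}.\ref{coadd} (composition of demotions, here $1\downarrow(\xi\downarrow\,\cdot\,) = (\xi{+}1)\downarrow$) with Theorem~\ref{theorem:OrderTypeCalculus}.4. The other genuinely substantive step is reading off from the CNF-style expansion of $o(C)$ that its $\ell$-value is precisely the exponent contributed by the leading BNF-block $C_n$; this hinges on the monotonicity built into the definition of Beklemishev Normal Form guaranteeing that $\omega^{o(1\downarrow C_n)}$ is genuinely the final $\omega$-power in the Cantor Normal Form of $o(C)$.
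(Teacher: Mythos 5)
Your route is in substance the same as the paper's: decompose the $\xi$-head into blocks along the occurrences of $\xi$, transport down by $\xi\downarrow$, apply the calculus of Theorem~\ref{theorem:OrderTypeCalculus} to express $o_\xi h_\xi(A)$ as a sum of $\omega$-powers, and read off $\ell$ as the final exponent, which is contributed by the leading block $h_{\xi+1}(A)$. (The paper does this in one inline computation: writing $h_\xi(A)=A_0\,\xi\ldots\xi\,A_n$ it obtains $o_\xi h_\xi(A)=\omega^{o_{\xi+1}(A_n)}+\ldots+\omega^{o_{\xi+1}(A_0)}$, whence $\ell\, o_\xi h_\xi(A)=o_{\xi+1}(A_0)$.) However, your displayed reduction target is false as stated: the identity $o\,h_1(C)=\ell\, o(C)$ for $C\in S_0$ fails, e.g.\ for $C=\la 1\ra\top$, where $o\,h_1(C)=o(C)=\omega$ while $\ell\, o(C)=1$. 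The correct base-case identity carries one more demotion, $o_1h_1(C)=o\bigl(1\downarrow h_1(C)\bigr)=\ell\, o(C)$, which is exactly the lemma at $\xi=0$; the same slip recurs in your ``main obstacle'' paragraph, where $o_{\xi+1}h_{\xi+1}(A)=o\,h_1(\xi\downarrow h_\xi A)$ should read $o_1h_1(\xi\downarrow h_\xi A)$. Notably, your own concluding chain $\ell\, o(C)=o(1\downarrow C_n)=o_1(C_n)=o_1h_1(C)$ proves the correct statement; only the gloss ``which after undoing the demotion is exactly $o\,h_1(C)$'' is mistaken, since $o_1h_1(C)\neq o\,h_1(C)$ in general. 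So this is a fixable bookkeeping error rather than a wrong approach: replace $o\,h_1$ by $o_1h_1$ throughout and your argument coincides with the paper's proof.

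Two smaller points. First, you invoke the \BNF monotonicity condition to justify that the last summand records $\ell$, but this is not needed: by Lemma~\ref{theorem:BasicPropertiesOrdinalArithmetic}, the end-exponent $\beta$ in any representation $\eta=\gamma+\omega^\beta$ is unique, so $\ell(\gamma+\omega^\beta)=\beta$ whether or not the exponents decrease; \BNF is needed only to license Theorem~\ref{theorem:OrderTypeCalculus}.2 at all. Second, since the lemma is stated for arbitrary worms while the calculus requires \BNF, strictly one should first pass to the \BNF equivalent of $A$, observing that $o_\xi h_\xi(A)=\Omega_\xi(A)$ (Lemma~\ref{theorem:OmegaReducesToO}) is invariant under provable equivalence — a point the paper glosses as well, so no penalty there.
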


\proof
We write $h_{\x}({A})$ as $A_0 {\x} \ldots {\x} A_n$. Clearly, $h_{{\x}+1}({A}) = A_0$. We shall now see that $\le o_{\x}h_{\x}({A})=o_{{\x}+1} (A_0)$.

To this end, we observe that 
\[
\begin{array}{lll}
\displaystyle o_{\x}h_{\x}({A}) & = & o_{\x}(A_0 \x \ldots \x A_n) \\\\
\ & = &
o\Big(({\x}{\downarrow}A_0) 0 \ldots 0 ({\x}{\downarrow}A_n)\Big)\\\\
\ & = & \omega^{o_1({\x}{\downarrow} A_n)} + \ldots +\omega^{o_1({\x}{\downarrow} A_0)}\\\\
\ & = & \omega^{o_{{\x}+1}(A_n)} + \ldots +\omega^{o_{{\x}+1}(A_0)}
\end{array}
\]
Consequently $\le o_{\x} h_{\x}({A})=o_{{\x}+1}(A_0)$, as desired.
\qed

Now we are ready to describe the relation between successor coordinates of the $\vec{\Omega}({A})$ sequence.
\begin{theorem}\label{theorem:SuccessorRelations}
$\Omega_{{\x}+1}({A}) = \le\Omega_{\x}({A})$
\end{theorem}

\proof
\[
\begin{array}{llll}
\Omega_{{\x}+1}({A}) & = & o_{{\x}+1}h_{{\x}+1}({A})& \mbox{by Lemma \ref{lemma:successorLemma}}\\
\ & = & \le o_{\x} h_{\x}({A})& \ \\
\ & = & \le\Omega_{\x}({A})& \mbox{by Lemma \ref{lemma:reducingGeneralOrderToSpecialOrder}}. \\
\end{array}
\]
\qed

Theorem \ref{theorem:SuccessorRelations} tells us what the relation between successor coordinates of $\vec{\Omega}({A})$ is. We may also infer from it when successor coordinates are different; if $\Omega_{\x}({A})$ is a fixed point of $\zeta \mapsto \omega^\zeta$ then $\Omega_{\x}({A})= \Omega_{{\x}+1}({A})$. 

\subsection{Equal coordinates}

Theorem \ref{theorem:EqualityCoordinates} below gives us a characterization of when different coordinates attain different or equal values. Before we can state and prove this theorem we first need some notation and background reasoning on CNFs.

For $\alpha \in {\sf On}$ we define $N_\alpha$ and the syntactic operation ${\sf CNF}(\alpha) := \sum_{i=1}^{N_\alpha}\omega^{{\x}_i}$ to be the unique CNF expression of $\alpha$. Next, we define for an ordinal $\alpha$ the set of its \emph{Cantor Normal Form Approximations} as the set of partial sums of ${\sf CNF}(\alpha)$, that is, if
\[{\sf CNF}(\alpha) = \sum_{i=1}^{N_\alpha}\omega^{{\x}_i},\]
then
\[
{\sf CNA}(\alpha) \ := \ \cbra  \sum_{i=1}^k \omega^{{\x}_i} : 0\leq k \leq N_\alpha\cket.
\]
We also define the \emph{Cantor Normal Form Projection} of some ordinal $\y$ on another ordinal $\x$ as follows:
\[
{\sf CNP}(\y,\x) \ \ := \ \ \max \{ \x' {\in} {\sf CNA}(\x) \mid \x' \leq \y\}.
\]
Note that $0\in {\sf CNA(\xi)}$ and that ${\sf CNP}(\y,\x)$ is defined for all $\y,\x \in {\sf On}$.

For $\alpha,\beta,\gamma \in {\sf On}$ we define 
\[
\alpha\sim_\gamma \beta \ \ : \Leftrightarrow \ \ 
{\sf CNP}(\alpha, \gamma) = {\sf CNP}(\beta, \gamma) .
\]
In words, $\alpha\sim_\gamma \beta$ whenever there is no partial sum of the CNF of $\gamma$ that falls in between $\alpha$ and $\beta$ (also the case that both $\alpha$ and $\beta$ are non-equal partial sums is excluded).

The just-defined notions of ${\sf CNA}({\x})$, ${\sf CNP}(\y,\x)$ and $\alpha\sim_\gamma \beta$ are needed to characterize the ${\x}{\downarrow}\zeta$ operation.

\begin{lemma}\label{theorem:DownArrowCharacterization}
Let $\zeta, \xi$ and $\eta$ be ordinals.
\begin{enumerate}
\item
$\displaystyle\forall \zeta{\leq} {\x}  \ \ \  \y{\downarrow}\x  =  
{\sf CNP}(\zeta,\xi){\downarrow}\x$;\\
\item
$\displaystyle\forall \y{\leq} \x \,  \exists ! \z {\in} {\sf CNA}(\x) \  \ \y{\downarrow}\x = \z{\downarrow}\x$;\\
\item
For $\x,\y \leq \z$, we have ${\x}{\downarrow}\z = \y{\downarrow}\z \ \Leftrightarrow \ \x\sim_\z \y $.

\end{enumerate}
\end{lemma}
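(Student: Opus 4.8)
The plan is to reduce all three items to the behaviour of left-subtraction $\y{\downarrow}\x = -\y+\x$ against the Cantor normal form of the target ordinal. Throughout I would fix ${\sf CNF}(\x) = \omega^{\x_1} + \ldots + \omega^{\x_N}$ with $\x_1 \geq \ldots \geq \x_N$, and write $\sigma_j = \sum_{i\leq j}\omega^{\x_i}$ for the $j$-th partial sum, so that ${\sf CNA}(\x) = \{\sigma_0,\ldots,\sigma_N\}$ with $\sigma_0 = 0 < \sigma_1 < \ldots < \sigma_N = \x$. The single computation that drives everything is that $-\sigma_j + \x = \omega^{\x_{j+1}} + \ldots + \omega^{\x_N}$ for $j < N$ and $-\sigma_N + \x = 0$.

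For item~1 I would set $\sigma_k = {\sf CNP}(\y,\x)$, the largest partial sum that is $\leq \y$. By maximality $\sigma_k \leq \y < \sigma_{k+1} = \sigma_k + \omega^{\x_{k+1}}$, so $\y = \sigma_k + \rho$ with $\rho < \omega^{\x_{k+1}}$. Since the tail of $\x$ beyond $\sigma_k$ is $\omega^{\x_{k+1}} + \ldots + \omega^{\x_N}$ and $\rho < \omega^{\x_{k+1}}$, absorption gives $\rho + (\omega^{\x_{k+1}} + \ldots + \omega^{\x_N}) = \omega^{\x_{k+1}} + \ldots + \omega^{\x_N}$, and therefore $-\y + \x = \omega^{\x_{k+1}} + \ldots + \omega^{\x_N} = -\sigma_k + \x$. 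This is exactly $\y{\downarrow}\x = {\sf CNP}(\y,\x){\downarrow}\x$, with the case $k=N$ (i.e.\ $\y = \x$) being the trivial instance yielding $0$.

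For the remaining two items the crucial auxiliary fact is that the map $\sigma_j \mapsto \sigma_j{\downarrow}\x$ sends ${\sf CNA}(\x)$ onto the strictly decreasing list of tails $\omega^{\x_{j+1}} + \ldots + \omega^{\x_N}$, and so is \emph{injective} on ${\sf CNA}(\x)$; strictness holds because passing from $\sigma_j$ to $\sigma_{j+1}$ deletes a leading summand $\omega^{\x_{j+1}}$, which (as $\x_{j+1} \geq \x_{j+2}$) is a genuine leading CNF term. Granting this, item~2 is immediate: existence of $\z \in {\sf CNA}(\x)$ with $\y{\downarrow}\x = \z{\downarrow}\x$ is item~1 with $\z = {\sf CNP}(\y,\x)$, and uniqueness is precisely the injectivity. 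For item~3, item~1 rewrites $\x{\downarrow}\z = {\sf CNP}(\x,\z){\downarrow}\z$ and $\y{\downarrow}\z = {\sf CNP}(\y,\z){\downarrow}\z$; the direction $\x\sim_\z\y \Rightarrow \x{\downarrow}\z = \y{\downarrow}\z$ is then trivial from the definition of $\sim_\z$, while the converse follows because both ${\sf CNP}(\x,\z)$ and ${\sf CNP}(\y,\z)$ lie in ${\sf CNA}(\z)$, so equality of $\x{\downarrow}\z$ and $\y{\downarrow}\z$ forces equality of the two projections by injectivity.

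The work is not conceptual but lies in the bookkeeping: pinning down $\rho < \omega^{\x_{k+1}}$ from maximality of the projection so that the absorption step in item~1 is fully justified, and verifying the strict monotonicity of $\sigma_j \mapsto -\sigma_j + \x$ cleanly. I expect the main obstacle to be handling the boundary cases correctly and uniformly, in particular $\y = 0$, $\y = \x$, and $k = N$, so that the single formula for $-\sigma_j + \x$ covers all of them without a separate case split.
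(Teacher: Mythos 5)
Your proposal is correct and takes essentially the same route as the paper: item 1 is proved by the same absorption computation (the paper shows $\y + (\z{\downarrow}\x)=\x$ for $\z={\sf CNP}(\y,\x)$ and invokes uniqueness of left subtraction, which is your tail formula in disguise), and items 2 and 3 are likewise reduced to item 1 plus injectivity of $\z'\mapsto \z'{\downarrow}\x$ on ${\sf CNA}(\x)$. The only difference is one of detail: you explicitly verify the strict decrease of the tails $\omega^{\x_{k+1}}+\ldots+\omega^{\x_N}$, a fact the paper merely asserts (``once we realize\ldots'').
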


\proof
1.\ We consider $\y\leq \x$. Now let $\z=\max\{\z' \in {\sf CNA}(\x)\mid \z'\leq \y\}={\sf CNP}(\zeta,\xi)$.
The claim is that $\y{\downarrow}\x=\z{\downarrow}\x$. Let
\[
{\sf CNF}(\x)= \sum_{i=1}^{N_{\x}}\omega^{\x_i}.
\]
As $\z= \sum_{i=1}^{k}\omega^{\x_i}$ for some $k\leq N_{\x}$, we see that 
\[
\z{\downarrow}\x= \sum_{i=k+1}^{N_\x}\omega^{\x_i}
\]
for $k<N_{\x}$ and $\z{\downarrow}\x=0$ for $k=N_{\x}$. We now claim that $\y + (\z{\downarrow}\x)=\x$ so that $\y{\downarrow}\x = \z{\downarrow}\x$ follows from the fact that
\[
\forall\, \y {<} \x\, \exists ! \u \ \y+\u = \x.
\]
We may assume $\y>\z$ otherwise $\y + (\z{\downarrow}\x)=\x$ is trivial.

Thus,
\[
\z= \sum_{i=1}^{k}\omega^{\x_i} < \y \leq  \sum_{i=1}^{k+1}\omega^{\x_i}.
\]
As by the definition of $\z$ we see that $\y \leq  \sum_{i=1}^{k+1}\omega^{\x_i}$ cannot be an equality whence
\[
\z= \sum_{i=1}^{k}\omega^{\x_i} < \y <  \sum_{i=1}^{k+1}\omega^{\x_i}.
\]
Thus, $\z\in \mathsf{CNA}(\y)$ and $\y+ \sum_{i=k+1}^{N_{\x}}\omega^{\x_i} = \x$, whence 
\[
\sum_{i=k+1}^{N_{\x}}\omega^{\x_i} = \y{\downarrow}\x=  \sum_{i=1}^{k}\omega^{\x_i} = \z{\downarrow}\x.
\]

2. Follows from part 1 once we realize that for different $\z$ and $\z'$ both in ${\sf CNA}(\x)$ we have $\z{\downarrow}\x\neq \z'{\downarrow}\x$.

3. From the proof of part 1 we see that
\[{\x}{\downarrow}\z = \y{\downarrow}\z \ \Leftrightarrow \ \max \{ \z' {\in} {\sf CNA}(\z) \mid \z'\leq \x \} = \max \{ \z' {\in} {\sf CNA}(\z) \mid \z'\leq \y \}\]
where the latter is precisely the definition of $\x\sim_\z\y$.
\qed

Once we have this lemma to characterize the ${\x}{\downarrow}\y$ operation, we are armed to prove a characterization for when two coordinates in $\vec{\Omega}({A})$ are equal.

\begin{theorem}\label{theorem:EqualityCoordinates}
For $A\in \BNF$, the following five conditions are equivalent.
\begin{enumerate}
\item
$\Omega_{\x}({A}) = \Omega_\zeta({A})$\\
\item
$o_{\x}h_{\x}({A}) = o_\zeta h_\zeta({A})$\\
\item
${\x}{\downarrow} h_{\x}({A}) = \y{\downarrow} h_\zeta({A})$\\
\item
$h_{\x}({A}) = h_\zeta({A})$ and ${\x}{\downarrow} h_{\x}({A}) = \y{\downarrow} h_\zeta({A})$\\
\item
$h_{\x}({A}) = h_\zeta({A})$ and $\forall \z \in h_{\x}({A}), \  \x\sim_\z\y$
\end{enumerate}
\end{theorem}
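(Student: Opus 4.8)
The plan is to establish the chain of equivalences $(1)\Leftrightarrow(2)\Leftrightarrow(3)\Leftrightarrow(4)\Leftrightarrow(5)$, which suffices to prove all five conditions equivalent. Throughout I would assume without loss of generality that $\x\leq\zeta$, the case $\x=\zeta$ being trivial and the conditions being symmetric in the two ordinals. The equivalence $(1)\Leftrightarrow(2)$ is immediate from Lemma \ref{theorem:OmegaReducesToO}, which rewrites $\Omega_{\x}(A)$ as $o_{\x}h_{\x}(A)$ and $\Omega_\zeta(A)$ as $o_\zeta h_\zeta(A)$. For $(2)\Leftrightarrow(3)$ I would pass through the calculus for $o$: by Theorem \ref{theorem:OrderTypeCalculus}.\ref{down} we have $o_{\x}h_{\x}(A)=o(\x{\downarrow}h_{\x}(A))$ and $o_\zeta h_\zeta(A)=o(\zeta{\downarrow}h_\zeta(A))$, since $h_{\x}(A)\in S_{\x}$ and $h_\zeta(A)\in S_\zeta$. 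Both demoted worms lie in $\B$: by Lemma \ref{theorem:TakingHeadsPreservesNormalForms} the heads are in $\B$, and a routine induction shows demotion sends $\B_{\x}$ into $\B$ (using that $\x{\downarrow}$ inverts the order-isomorphism $\x{\uparrow}$ of Lemma \ref{theorem:IsomorphicWormFragments}, so it preserves the block structure and the $\leq$-conditions defining $\B$). Since $o$ is injective on $\B$ by Lemma \ref{theorem:oDefinesAnIsomorphism}, equality of the $o$-values is equivalent to equality of the two demoted worms, which is exactly $(3)$.

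Next I would treat $(3)\Leftrightarrow(4)$. The implication $(4)\Rightarrow(3)$ is trivial, as $(4)$ is $(3)$ with an added conjunct. The key point for $(3)\Rightarrow(4)$ is that the demotion operations preserve length: $\x{\downarrow}B$ is obtained from $B$ by relabelling each symbol, so $\x{\downarrow}h_{\x}(A)$ has the same length as $h_{\x}(A)$, and likewise for $\zeta$. Moreover, since $\x\leq\zeta$, every symbol that is $\geq\zeta$ is $\geq\x$, so $h_\zeta(A)$ is an initial segment of $h_{\x}(A)$. Hence if the two demoted worms in $(3)$ are equal they have equal length, forcing $h_{\x}(A)$ and $h_\zeta(A)$ to have equal length; being one a prefix of the other, they coincide, which gives $(4)$.

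Finally, $(4)\Leftrightarrow(5)$. Writing $B:=h_{\x}(A)=h_\zeta(A)$ for the common head (which lies in $S_\zeta$), I read both extra conjuncts under this equality. Now $\x{\downarrow}B=\zeta{\downarrow}B$ holds iff the two relabellings agree symbol by symbol, that is, iff $\x{\downarrow}\z=\zeta{\downarrow}\z$ for every symbol $\z\in B$. As each such $\z$ satisfies $\z\geq\zeta\geq\x$, Lemma \ref{theorem:DownArrowCharacterization}.3 rewrites this as $\x\sim_\z\zeta$ for every $\z\in B$, which is precisely the extra conjunct of $(5)$.

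The one genuinely structural step is $(3)\Rightarrow(4)$: one must notice that equality of the \emph{demoted} heads already forces the heads themselves to be equal, and the clean route to this is the length-and-prefix observation above rather than any head-on manipulation of the $\downarrow$ operation. The remaining delicate point, more bookkeeping than insight, is the auxiliary fact that demotion preserves membership in $\B$, which is what lets one upgrade equality of $o$-values to equality of worms in $(2)\Leftrightarrow(3)$; everything else is a direct appeal to the cited lemmas.
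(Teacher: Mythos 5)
Your proposal is correct and follows essentially the same route as the paper's own proof: the identical chain $(1)\Leftrightarrow(2)\Leftrightarrow(3)\Leftrightarrow(4)\Leftrightarrow(5)$, using Lemma \ref{theorem:OmegaReducesToO}, the $o$-isomorphism on $\B$, the length-and-prefix argument for $(3)\Rightarrow(4)$, and Lemma \ref{theorem:DownArrowCharacterization}.3 for $(4)\Leftrightarrow(5)$. If anything, you are slightly more careful than the paper, which leaves implicit both the fact that $\x{\downarrow}$ maps $\B_\x$ into $\B$ (needed to invoke injectivity of $o$, and the reason for the \BNF\ hypothesis flagged in the paper's footnote) and the side condition $\x,\y\leq\z$ when applying Lemma \ref{theorem:DownArrowCharacterization}.3.
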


\proof
$(1.) \Leftrightarrow (2.)$ is just Lemma \ref{theorem:OmegaReducesToO}.

$(2.) \Leftrightarrow (3.)$: Observe that $o_{\x}(h_{\x}({A}))=o({\x}{\downarrow}h_{\x}({A}))$ and $o_\zeta(h_\zeta({A}))=o(\y{\downarrow}h_\zeta({A}))$. As $o$ defines an isomorphism between $\B$ and $\sf On$, we obtain\footnote{As was kindly pointed out by an anonymous referee, it is essential to assume that $A\in \B$. Note that $o_1 h_1 (A) = o_0 h_0(A)$ but $1{\downarrow}h_1(A) \neq 0{\downarrow}h_0(A)$ in case $A=\omega \omega 0 \omega$.}
\[
o_{\x}h_{\x}({A})=o_\zeta h_\zeta({A}) \ \Leftrightarrow \ {\x}{\downarrow}h_{\x}({A}) = \y{\downarrow}h_\zeta({A}).
\] 

$(3.) \Leftrightarrow (4.)$:
Suppose $\xi\downarrow h_\xi(A)=\zeta\downarrow h_\zeta(A)$. Then, it follows that the two have equal length; further, they have length equal to that of $h_\xi(A),h_\zeta(A)$, respectively. But two initial segments of $A$ of equal length must be equal, that is, $h_\xi(A)=h_\zeta(A)$.


$(4.)\Leftrightarrow (5.)$:
\[
\begin{array}{lllll}
h_{\x}({A}) = h_\zeta({A})&\& & {\x}{\downarrow} h_{\x}({A}) = \y{\downarrow} h_\zeta({A}) & \Leftrightarrow & \ \\
h_{\x}({A}) = h_\zeta({A}) &\& & \forall\, \z {\in} h_{\x}({A}) \ {\x}{\downarrow}\z = \y{\downarrow}\z& \Leftrightarrow & \mbox{ by Lemma \ref{theorem:DownArrowCharacterization}.3} \\
h_{\x}({A}) = h_\zeta({A}) &\& &  \forall\, \z {\in} h_{\x}({A}) \ \x\sim_\z\y& \  & \ \\
\end{array}
\]
\qed


\subsection{Limit coordinates}

The results so far have already provided us with quite some insight about what the sequences $\vec{\Omega}({A})$ look like. By Lemma \ref{theorem:WeaklyDecreasingSequences} we know that the set of values that occur in $\vec{\Omega}({A})$ is finite. Moreover, by Theorem \ref{theorem:SuccessorRelations} we know exactly the values at successor coordinates in terms of the direct predecessor. In particular, we know that if the value of $\vec{\Omega}({A})$ at $\x$ is the same as at the successor coordinate, then it remains the same for all further successors. 

The question remains what happens at limit ordinals coordinates. In this subsection we shall determine at what limit ordinals a new value can be attained and how the new value relates to previous values.
Let us start out the analysis by formulating a negative version of Theorem \ref{theorem:EqualityCoordinates}. 

\begin{lemma}\label{theorem:SmallerOmegaCharacterization}
For $A \in \B$ and ${\xi} < {\zeta}$ we have that 
\begin{align*}
\Omega_{{\xi}}({A}) >  \Omega_{\zeta}({A}) &\Leftrightarrow\\
 (\exists \,{\z}{\in} h_{{\xi}}({A})\ {\xi}{\leq} \z {<} {\zeta})& \ \vee \ (\exists \, \z{\in} h_{{\xi}}({A})\ {\sf CNP}({\xi},\z){<} {\sf CNP}({\zeta},\z)).
\end{align*}
\end{lemma}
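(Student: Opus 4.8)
The plan is to read this lemma as the literal contrapositive of Theorem \ref{theorem:EqualityCoordinates}. First I would invoke Lemma \ref{theorem:WeaklyDecreasingSequences}, which gives $\Omega_{\xi}(A) \geq \Omega_{\zeta}(A)$ whenever $\xi < \zeta$; hence $\Omega_{\xi}(A) > \Omega_{\zeta}(A)$ holds if and only if $\Omega_{\xi}(A) \neq \Omega_{\zeta}(A)$, i.e.\ precisely when condition (1.)\ of Theorem \ref{theorem:EqualityCoordinates} fails. Using the equivalence $(1.) \Leftrightarrow (5.)$ of that theorem, the strict inequality is therefore equivalent to the negation of
\[
h_{\xi}(A) = h_{\zeta}(A) \ \wedge \ \forall \z \in h_{\xi}(A)\ \xi \sim_\z \zeta,
\]
that is, to
\[
h_{\xi}(A) \neq h_{\zeta}(A) \ \vee \ \exists \z \in h_{\xi}(A)\ \neg(\xi \sim_\z \zeta).
\]
It then remains only to rewrite each of these two disjuncts into the explicit form appearing on the right-hand side of the lemma.

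For the first disjunct I would argue directly from the definition of the head. Since $\xi < \zeta$, every symbol $\geq \zeta$ is also $\geq \xi$, so $h_{\zeta}(A)$ is always an initial segment of $h_{\xi}(A)$; the two coincide exactly when no symbol of $h_{\xi}(A)$ lies strictly below $\zeta$. As every symbol occurring in $h_{\xi}(A)$ is by definition $\geq \xi$, this yields $h_{\xi}(A) \neq h_{\zeta}(A) \Leftrightarrow \exists \z \in h_{\xi}(A)\ \xi \leq \z < \zeta$, which is exactly the first disjunct on the right-hand side.

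For the second disjunct the key observation is that ${\sf CNP}(\cdot,\z)$ is weakly monotone in its first argument: if $\alpha \leq \beta$ then $\{\z' \in {\sf CNA}(\z) \mid \z' \leq \alpha\} \subseteq \{\z' \in {\sf CNA}(\z) \mid \z' \leq \beta\}$, so ${\sf CNP}(\alpha,\z) \leq {\sf CNP}(\beta,\z)$. Applying this with $\xi < \zeta$ gives ${\sf CNP}(\xi,\z) \leq {\sf CNP}(\zeta,\z)$ for every $\z$, so that $\neg(\xi \sim_\z \zeta)$, i.e.\ ${\sf CNP}(\xi,\z) \neq {\sf CNP}(\zeta,\z)$ by the definition of $\sim_\z$, is the same as ${\sf CNP}(\xi,\z) < {\sf CNP}(\zeta,\z)$. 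This turns the second disjunct into $\exists \z \in h_{\xi}(A)\ {\sf CNP}(\xi,\z) < {\sf CNP}(\zeta,\z)$, completing the translation.

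The substantive content is entirely packaged in Theorem \ref{theorem:EqualityCoordinates}, so I do not anticipate a genuine obstacle; the only two non-bookkeeping steps are the head-prefix comparison and the monotonicity remark, the latter being what converts the ``$\neq$'' produced by negating condition (5.)\ into the ``$<$'' demanded by the statement.
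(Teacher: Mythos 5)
Your proposal is correct and follows essentially the same route as the paper's own proof: both contrapose the equivalence $(1.)\Leftrightarrow(5.)$ of Theorem \ref{theorem:EqualityCoordinates} and then, using $\xi<\zeta$, rewrite the disjunct $h_\xi(A)\neq h_\zeta(A)$ as $\exists\,\z\in h_\xi(A)\ \xi\leq\z<\zeta$ and the disjunct $\exists\,\z\in h_\xi(A)\ \neg(\xi\sim_\z\zeta)$ as $\exists\,\z\in h_\xi(A)\ {\sf CNP}(\xi,\z)<{\sf CNP}(\zeta,\z)$. Your explicit appeals to Lemma \ref{theorem:WeaklyDecreasingSequences} (to upgrade $\neq$ to $>$) and to the weak monotonicity of ${\sf CNP}(\cdot,\z)$ in its first argument merely spell out details the paper leaves implicit.
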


\proof
By contraposing equivalence $(1.) \Leftrightarrow (5.)$ of Theorem \ref{theorem:EqualityCoordinates} we get 
\[
\Omega_\xi({A}) \neq \Omega_{\zeta}({A}) \ \Leftrightarrow \ h_{\zeta}({A}) {\neq} h_\xi({A}) \ \vee \ \exists \,{\z}{\in}h_\xi({A}) \ {\zeta} \not \sim_{\z} {\xi}.
\]
But, as ${\xi}<{\zeta}$ we see 
\[
h_{\zeta}({A}) {\neq} h_\xi({A}) \ \Leftrightarrow \ \exists\,{\z}{\in}h_\xi({A})\ {\xi}\leq\z <{\zeta}.
\]
Likewise, 
\[
\exists \,{\z}{\in}h_\xi({A}) \ {\zeta} \not \sim_{\z}{{\xi}}\ \Leftrightarrow \ \exists \,{\z}{\in}h_\xi({A}) \ {\sf CNP}({\xi},\z){\neq} {\sf CNP}({\zeta},\z).
\]
As ${\xi}<{\zeta}$ we have 
\[{\sf CNP}({\xi},\z){\neq} {\sf CNP}({\zeta},\z) \ \Leftrightarrow \ {\sf CNP}({\xi},\z){<} {\sf CNP}({\zeta},\z).
\]
\qed

The first question to ask is at which limit coordinates the sequence $\vec{\Omega}({A})$ can change. Let us first write precisely what it means for the sequence $\vec{\Omega}({A})$ to change at some coordinate $\y$. We express this by the expression 
\begin{align*}
{\sf Change}(\y,{A}) \ \  &:=\\
\exists\, \x{<}\y & \ (\Omega_{\x}({A}) {>} \Omega_\zeta({A})\ \& \ \forall \z\ (\x{\leq}\z{<}\y \Rightarrow \Omega_{\x}({A}){=}\Omega_\eta({A}))).
\end{align*}
The next lemma gives an alternative characterization of ${\sf Change}(\y,{A})$.
\begin{lemma}\label{theorem:AlternativeCharacterizationOfChange}
${\sf Change}(\y,{A}) \ \ \Leftrightarrow \ \ \forall\, \x{<}\y \ \Omega_{\x}({A}) {>} \Omega_\zeta({A})$
\end{lemma}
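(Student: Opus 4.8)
The plan is to prove both implications by exploiting what $\vec{\Omega}(A)$ really is: a weakly decreasing sequence of ordinals that assumes only finitely many values. The weak monotonicity is exactly Lemma~\ref{theorem:WeaklyDecreasingSequences}, and finiteness of the value set was recorded immediately after it; together they say that $\xi \mapsto \Omega_\xi(A)$ is a weakly decreasing step function on the initial segment $[0,\zeta)$. Throughout I abbreviate $f(\xi) := \Omega_\xi(A)$, so that the claim reads ${\sf Change}(\zeta,A) \Leftrightarrow \forall \xi < \zeta\ f(\xi) > f(\zeta)$.

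For the direction $(\Rightarrow)$ I would simply unwind the definition of ${\sf Change}(\zeta,A)$. A witness is some $\xi_0 < \zeta$ with $f(\xi_0) > f(\zeta)$ such that $f$ is constant on $[\xi_0,\zeta)$. Given an arbitrary $\xi < \zeta$ I split into two cases: if $\xi \geq \xi_0$ then $\xi$ lies in the plateau, so $f(\xi) = f(\xi_0) > f(\zeta)$; if $\xi < \xi_0$ then weak monotonicity (Lemma~\ref{theorem:WeaklyDecreasingSequences}) gives $f(\xi) \geq f(\xi_0) > f(\zeta)$. Either way $f(\xi) > f(\zeta)$, which is the universal statement on the right.

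For the direction $(\Leftarrow)$ the only work is producing the witness. Assuming $f(\xi) > f(\zeta)$ for every $\xi < \zeta$, I would let $v$ be the least value attained by $f$ on $[0,\zeta)$ — this least value exists because $f$ takes only finitely many values (equivalently, because any nonempty set of ordinals has a least element) — and let $\xi^*$ be the least index with $f(\xi^*) = v$. Weak monotonicity then forces $f \equiv v$ on the whole final segment $[\xi^*,\zeta)$: for $\eta$ in this range $f(\eta) \leq f(\xi^*) = v$ by monotonicity, while $f(\eta) \geq v$ by minimality of $v$. Finally $f(\xi^*) = v > f(\zeta)$ by the hypothesis applied to $\xi^* < \zeta$. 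Hence $\xi^*$ witnesses ${\sf Change}(\zeta,A)$, and the same choice of $\xi^*$ handles successor and limit $\zeta$ uniformly.

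The lemma is thus essentially a bookkeeping consequence of weak monotonicity, and I do not anticipate a genuine obstacle. The one point that needs care is the existence of the final plateau in $(\Leftarrow)$: this is exactly where finiteness of the value set (or, if one prefers, well-foundedness of the ordinals) is invoked, guaranteeing both that the minimum of $f$ over $[0,\zeta)$ is attained and that there is a least index attaining it. I would also flag that the statement is meant for $\zeta > 0$ — in particular for the limit coordinates of interest — since for $\zeta = 0$ the right-hand side is vacuously true while ${\sf Change}(0,A)$ fails for want of any $\xi < 0$.
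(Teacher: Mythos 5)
Your proof is correct and follows essentially the same route as the paper, which also derives the equivalence from weak monotonicity (Lemma~\ref{theorem:WeaklyDecreasingSequences}) plus the fact that a weakly decreasing ordinal sequence must stabilize below $\zeta$; the paper merely treats the successor case as immediate and invokes finiteness of $\{\Omega_\xi(A)\mid\xi<\zeta\}$ at limits, whereas you handle both cases uniformly via the least attained value. Your observation that the equivalence should be read for $\zeta>0$ is a sound (if minor) refinement of the paper's tacit assumption.
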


\proof
For $\y \in {\sf Succ}$ this is clear. If $\y\in {\sf Lim}$, then $\{\Omega_{\x}({A})\mid \x<\y\}$ is a finite set as all the $\Omega_{\x}({A})\in {\sf On}$ and these are weakly decreasing. Thus, at some point below $\y$ the sequence must stabilize.
\qed

We can now characterize at what limit ordinals the sequence $\vec{\Omega}({A})$ can change.

\begin{theorem}
For $\y\in {\sf Lim}$ and $A \in \B$: \ \ ${\sf Change}(\y,{A})\ \ \Leftrightarrow \ \ \exists \, \x{\in}h_\zeta({A})\ \y{\in}{\sf CNA}(\x)$.
\end{theorem}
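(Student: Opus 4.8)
The plan is to chain the two lemmas immediately preceding the statement. First I would use Lemma~\ref{theorem:AlternativeCharacterizationOfChange} to rewrite ${\sf Change}(\zeta,A)$ as the condition that $\Omega_\xi(A)>\Omega_\zeta(A)$ for every $\xi<\zeta$, and then expand each such inequality by Lemma~\ref{theorem:SmallerOmegaCharacterization} into the disjunction of the clause $\mathrm{(D1)}$, namely $\exists\,\eta{\in}h_\xi(A)\ \xi{\leq}\eta{<}\zeta$, and the clause $\mathrm{(D2)}$, namely $\exists\,\eta{\in}h_\xi(A)\ {\sf CNP}(\xi,\eta)<{\sf CNP}(\zeta,\eta)$. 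So the goal becomes to prove that $\mathrm{(D1)}\vee\mathrm{(D2)}$ holds for every $\xi<\zeta$ exactly when some symbol of $h_\zeta(A)$ carries $\zeta$ as an element of its set of Cantor normal form approximations.

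Before splitting into the two implications I would record how the clauses simplify as $\xi$ ranges below $\zeta$. Since $\xi<\zeta$, the head $h_\zeta(A)$ is an initial segment of $h_\xi(A)$, and the only symbols of $h_\xi(A)$ not already in $h_\zeta(A)$ are those lying in $[\xi,\zeta)$. Setting $\mu:={\sf First}(r_\zeta(A))$ (with $\mu$ absent when $r_\zeta(A)=\lambda$), this shows that $\mathrm{(D1)}$ holds at $\xi$ precisely when $\xi\leq\mu$, and that for $\xi>\mu$ one has $h_\xi(A)=h_\zeta(A)$, so that $\mathrm{(D2)}$ at such $\xi$ only quantifies over symbols $\eta\in h_\zeta(A)$. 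Finally, unwinding ${\sf CNP}$, clause $\mathrm{(D2)}$ for a fixed $\eta$ says exactly that some element of ${\sf CNA}(\eta)$ lies in the interval $(\xi,\zeta]$.

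The right-to-left implication is then immediate: if $\eta\in h_\zeta(A)$ satisfies $\zeta\in{\sf CNA}(\eta)$, then for every $\xi<\zeta$ the value $\zeta$ itself is an element of ${\sf CNA}(\eta)$ in $(\xi,\zeta]$, so $\mathrm{(D2)}$ holds at $\xi$; hence $\Omega_\xi(A)>\Omega_\zeta(A)$ for all $\xi<\zeta$ and Lemma~\ref{theorem:AlternativeCharacterizationOfChange} delivers ${\sf Change}(\zeta,A)$. For the converse I would argue by contraposition. Assuming no $\eta\in h_\zeta(A)$ has $\zeta\in{\sf CNA}(\eta)$, each projection ${\sf CNP}(\zeta,\eta)$ (for $\eta\in h_\zeta(A)$) is then a partial sum strictly below $\zeta$; as $h_\zeta(A)$ has only finitely many symbols, the maximum $\xi^\ast$ of these finitely many projections satisfies $\xi^\ast<\zeta$. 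Because $\zeta$ is a limit ordinal I can choose $\xi$ with $\max(\mu,\xi^\ast)<\xi<\zeta$; then $\mathrm{(D1)}$ fails since $\xi>\mu$, and $\mathrm{(D2)}$ fails since every element of every ${\sf CNA}(\eta)$ that is $\leq\zeta$ is $\leq\xi^\ast<\xi$. By Lemma~\ref{theorem:SmallerOmegaCharacterization} this yields $\Omega_\xi(A)=\Omega_\zeta(A)$ for some $\xi<\zeta$, contradicting ${\sf Change}(\zeta,A)$.

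The hard part is this last construction of a ``stagnant'' coordinate $\xi$ strictly between $\max(\mu,\xi^\ast)$ and $\zeta$: it rests on the finiteness of $h_\zeta(A)$, which turns the supremum of the projections into a genuine maximum lying below $\zeta$, together with $\zeta$ being a limit, which guarantees room for $\xi$ below it. I would finish by checking the degenerate cases $h_\zeta(A)=\lambda$ (where both sides are plainly false, since $\Omega_\zeta(A)=0$ and a suitable $\xi\in({\sf First}(A),\zeta)$ already gives $\Omega_\xi(A)=0$) and $r_\zeta(A)=\lambda$ (where $\mu$ simply drops out of the construction), both of which are handled by the same choice of $\xi$.
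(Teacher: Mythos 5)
Your proposal is correct and takes essentially the same route as the paper: both reduce $\mathsf{Change}(\zeta,A)$ through Lemma~\ref{theorem:AlternativeCharacterizationOfChange} and Lemma~\ref{theorem:SmallerOmegaCharacterization}, get the right-to-left direction from the observation that $\zeta\in\mathsf{CNA}(\eta)$ forces $\mathsf{CNP}(\xi,\eta)<\mathsf{CNP}(\zeta,\eta)$ for every $\xi<\zeta$, and get the converse by exploiting the finiteness of $h_\zeta(A)$ and the limitness of $\zeta$ to pick a stagnant coordinate above all relevant symbols and partial sums --- your $\max(\mu,\xi^\ast)$ plays exactly the role of the paper's $\xi_1$ (built there from $\xi_0$ and the partial sums in $\mathsf{CNA}(\eta)$ below $\zeta$), with your contrapositive phrasing a purely presentational variant. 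One harmless slip: your side remark that all symbols of $h_\xi(A)$ beyond $h_\zeta(A)$ lie in $[\xi,\zeta)$ is false in general (take $A=\zeta\mu\zeta$ with $\xi\leq\mu<\zeta$, where $h_\xi(A)$ picks up a second $\zeta$), but the two consequences you actually use --- that $\mathrm{(D1)}$ holds at $\xi$ iff $\xi\leq\mu$, and that $h_\xi(A)=h_\zeta(A)$ for $\xi>\mu$ --- are both correct, so the argument is unaffected.
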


\proof
For $\y\in {\sf Lim}$ we see that, by Lemma \ref{theorem:AlternativeCharacterizationOfChange}, ${\sf Change}(\y,{A})$ is equivalent to the claim that, given $\x<\y$, $\Omega_{\x}({A}) {>} \Omega_\zeta({A})$.

By Lemma \ref{theorem:SmallerOmegaCharacterization}, the latter is in turn equivalent to
\begin{equation}\label{equation:changeEquivalence}
\forall \, \x{<}\y\ (\exists\,{\z}{\in}h_{\x}({A})\ \x{\leq}\z<\y \ \vee \ \exists\,{\z}{\in}h_{\x}({A})\ {\sf CNP}(\x,\z){<}{\sf CNP}(\y,\z)).
\end{equation}
Clearly, if $\exists \, \x{\in}h_\zeta({A})\ \y{\in}{\sf CNA}(\x)$ then for each $\xi' < \zeta$ we have that ${\sf CNP}(\x',\xi){<}{\sf CNP}(\zeta,\xi)$ and by \eqref{equation:changeEquivalence} we conclude ${\sf Change}(\y,{A})$. This concludes the $\Leftarrow$ direction.

For the other direction we reason as follows. Let $\x_0\ :=\ \max \{ \x' \in {A} \mid \x'< \y\} +1$. Since $\zeta$ is a limit, $\x_0 < \zeta$. Clearly, for $\x_0 < \x < \zeta$ we have that $\neg \exists \eta \in h_\x (A) \ \x \leq \eta < \zeta$. Note that for these $\x$'s we have $h_{\x}({A})=h_\zeta({A})$ thus, by  \eqref{equation:changeEquivalence} we obtain
\begin{equation}\label{equation:reducedToCNPs}
\forall \x \ (\x_0{<}\x{<}\y \to \exists\,{\z}{\in}h_\zeta({A})\ {\sf CNP}(\x,\z){<}{\sf CNP}(\y,\z)).
\end{equation}
Now we define $\x_1 < \zeta$ so that it is at least $\x_0$ and exceeds all possible Cantor normal form approximations from ordinals in $h_\zeta (A)$. That is, we define 
\[
\x_1 := \max \{ x_0, \max \{ \lambda \in {\sf CNA}(\eta) \mid \eta \in h_\zeta(A) \wedge \lambda < \zeta \} + 1\}.
\]
Again, $\x_1< \zeta$ since $\zeta$ is a limit. Thus, from \eqref{equation:reducedToCNPs} we in particular get
\[
\forall \x \ (\x_1{<}\x{<}\y \to \exists\,{\z}{\in}h_\zeta({A})\ {\sf CNP}(\x,\z){<}{\sf CNP}(\y,\z)).
\]
However, by the very choice of $\x_1$ for any such $\x$ we can only have $\exists\,{\z}{\in}h_\zeta({A})\ {\sf CNP}(\x,\z)<{\sf CNP}(\y,\z)$ in case $\zeta \in {\sf CNA}(\eta)$ for some $\eta \in h_\zeta({A})$.
\qed

Now that we have fully determined at which limit coordinates a change can occur the only thing left to establish is the size of the change. In other words, if ${\sf Change}(\y,{A})$ for some $\y\in {\sf Lim}$, how does $\Omega_\zeta({A})$ relate to $\Omega_{\x}({A})$ for $\x<\y$?

Here, our functions $\ex^\x$ come back into play:

\begin{theorem}\label{theorem3.6}
Let $\y{\in}{\sf Lim}$ then, for $\theta$ large enough we have that
\[
\Omega_{\theta}({A}) = \ex^{-\theta+\y} \Omega_\zeta({A}) = e_{\le \zeta} \Omega_\zeta(A).
\] 
\end{theorem}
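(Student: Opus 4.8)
The plan is to prove both equalities by passing through the reduction $\Omega_\x(A)=o_\x h_\x(A)$ of Lemma~\ref{theorem:OmegaReducesToO} together with the calculus for $o_\x$. First I would make precise what ``$\theta$ large enough'' should mean: I take $\theta<\zeta$ satisfying (i) $\theta$ exceeds every modality of $A$ that is strictly below $\zeta$, and (ii) $\theta\geq\zeta'$, where $\zeta=\zeta'+\omega^{\le\zeta}$ is the splitting provided by item~2 of Lemma~\ref{theorem:BasicPropertiesOrdinalArithmetic}. Since $\zeta$ is a limit, both requirements hold cofinally below $\zeta$. Requirement~(i) guarantees that no modality of $A$ lies in the interval $[\theta,\zeta)$, so the two heads coincide: $h_\theta(A)=h_\zeta(A)=:H$, and clearly $H\in S_\zeta$. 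By Lemma~\ref{theorem:OmegaReducesToO} and item~\ref{down} of Theorem~\ref{theorem:OrderTypeCalculus} this already rewrites the quantities of interest as $\Omega_\theta(A)=o(\theta{\downarrow}H)$ and $\Omega_\zeta(A)=o(\zeta{\downarrow}H)$.

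For the first equality I would set $\delta:=-\theta+\zeta$, so that $\theta+\delta=\zeta$. Since every modality of $H$ is at least $\zeta$, demoting by $\theta$ leaves every modality at least $-\theta+\zeta=\delta$; hence $\theta{\downarrow}H\in S_\delta$. Item~\ref{coadd} of Lemma~\ref{theorem:uparrowProperties} then gives $\delta{\downarrow}(\theta{\downarrow}H)=(\theta+\delta){\downarrow}H=\zeta{\downarrow}H$. Writing $G:=\theta{\downarrow}H\in S_\delta$ and using $\delta{\uparrow}(\delta{\downarrow}G)=G$ (the $\beta=0$ instance of item~\ref{inverse} of Lemma~\ref{theorem:uparrowProperties}), together with $o(\delta{\uparrow}X)=e^\delta o(X)$ from item~\ref{exo} of Theorem~\ref{theorem:OrderTypeCalculus}, I obtain
\[
\Omega_\theta(A)=o(G)=o\big(\delta{\uparrow}(\delta{\downarrow}G)\big)=e^\delta\,o(\delta{\downarrow}G)=e^\delta\,o(\zeta{\downarrow}H)=e^{-\theta+\zeta}\,\Omega_\zeta(A),
\]
which is exactly the first claimed identity.

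The second equality is then a purely arithmetic simplification of the exponent. With $\theta$ as above, $\theta=\zeta'+\rho$ for some $\rho<\omega^{\le\zeta}$, so $-\theta+\zeta$ is the unique $\delta$ with $\rho+\delta=\omega^{\le\zeta}$; since $\omega^{\le\zeta}$ is additively indecomposable we have $\rho+\omega^{\le\zeta}=\omega^{\le\zeta}$, forcing $\delta=\omega^{\le\zeta}$. Lemma~\ref{theorem:OrderOFWormsOfOmegaPowersAreFixedPoints} then yields $e^{-\theta+\zeta}=e^{\omega^{\le\zeta}}=e_{\le\zeta}$, and applying this to $\Omega_\zeta(A)$ completes the proof. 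The one step that needs genuine care — and which I expect to be the main obstacle — is the index bookkeeping in the middle paragraph: one must verify that $\theta{\downarrow}H$ really lands in $S_\delta$ (and not merely in $S_0$), so that the instances of items~\ref{coadd} and~\ref{inverse} of Lemma~\ref{theorem:uparrowProperties} apply with their side conditions satisfied, and that the hyperexponential law is invoked with the correct superscript $\delta=-\theta+\zeta$ rather than with $\zeta$ or $\theta$. Everything else is a direct appeal to the already-established calculus.
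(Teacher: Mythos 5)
Your proof is correct and takes essentially the same route as the paper's: both reduce via $\Omega_\xi(A)=o_\xi h_\xi(A)$ (Lemma \ref{theorem:OmegaReducesToO}) together with items \ref{exo} and \ref{down} of Theorem \ref{theorem:OrderTypeCalculus}, arrange $-\theta+\zeta=\omega^{\le\zeta}$ by taking $\theta$ past $\zeta'$ in the splitting $\zeta=\zeta'+\omega^{\le\zeta}$, and conclude with $e^{\omega^{\le\zeta}}=e_{\le\zeta}$ (Lemma \ref{theorem:OrderOFWormsOfOmegaPowersAreFixedPoints}). The only divergences are minor: you secure $h_\theta(A)=h_\zeta(A)$ directly by choosing $\theta$ above every modality of $A$ lying below $\zeta$ — a purely syntactic argument, where the paper instead stabilizes the sequence on $[\xi,\zeta)$ and invokes Theorem \ref{theorem:EqualityCoordinates}, whose hypothesis $A\in\B$ your choice happens to avoid — and you factor the paper's single application of Lemma \ref{theorem:uparrowProperties}.\ref{inverse} (with $\beta=\theta$) into item \ref{coadd} plus the $\beta=0$ instance of item \ref{inverse}, which is the same computation differently bracketed.
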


\proof
We pick $\xi$ large enough so that the values of $\Omega_{\x'}({A})$ do not change for $\x\leq \x'<\y$. Thus, we know in particular by Theorem \ref{theorem:EqualityCoordinates} that $h_{\x}({A}) \ =\  h_{\x'}({A})$ whence also
\begin{equation}\label{dagger}
h_{\x'}({A}) \ =\  h_\zeta({A}) \ \ \ \mbox{ for each $\x' \in [\x,\y]$} .
\end{equation}
As $\zeta = \zeta' + \omega^{\le \zeta}$ for some $\zeta' < \zeta$, we have that $-\x + \y \geq \omega^{\le \zeta}$. So certainly $-\theta + \y = \omega^{\le \zeta}$ for $\theta \in [\x,\y)$ large enough. Let $\delta=-\theta+\y = \theta \downarrow \y = \omega^{\le \zeta}$. By definition
\begin{equation}\label{equation:DefinitionOfDelta}
\theta + \delta = \y.
\end{equation}

Now we can prove our theorem: 
\[
\begin{array}{llll}
\Omega_{\theta}({A}) & = & o_{\theta}h_{\theta}({A}) &  \mbox{Lemma \ref{theorem:OmegaReducesToO}}\\
 & = & o_{\theta}h_\y({A})& \mbox{By $(\ref{dagger})$}\\
 & = &o( \theta\downarrow h_\zeta({A}))& \mbox{Lemma \ref{theorem:OrderTypeCalculus}.\ref{down}}\ \\
 & = & o(\delta\uparrow ((\theta + \delta)\downarrow h_\zeta (A))) \ \ \ \ \ \  & \mbox{Lemma \ref{theorem:uparrowProperties}.\ref{inverse}}\\
 & = & o(\delta\uparrow (\zeta\downarrow h_\zeta (A)))& \mbox{By \eqref{equation:DefinitionOfDelta}} \\
 & = & \ex^{\delta} o(\zeta{\downarrow}h_\zeta({A})) & \mbox{Lemma \ref{theorem:OrderTypeCalculus}.\ref{exo}}\\
 & = & \ex^{\delta} o_\zeta h_\zeta({A})) & \mbox{Lemma \ref{theorem:OrderTypeCalculus}.\ref{down}}\\
 & = &e_{\le \zeta}\Omega_\zeta (A). &\mbox{Lemmas \ref{theorem:OrderOFWormsOfOmegaPowersAreFixedPoints} and \ref{theorem:OmegaReducesToO}}
\\
\end{array}
\]
\qed
Note that this theorem establishes the size of limit coordinates both in case a change does occur and in case no change occurs. The latter case can only be so when $\Omega_\zeta({A})$ is a fixed point of $e_{\le\zeta}$.

\section{From local to global}\label{section:FromLocalToGlobal}

The previous section has established exactly where chan\-ges occur in the $\vec{\Omega}(A)$ sequences. Moreover, it established the size of each change in the sequence. We have distinguished two cases: successor coordinates and limit coordinates. 

In Theorem \ref{theorem3.6} we have seen that the value of a limit coordinate fully determines its `direct predecessor' and vice versa. Recall that the value of a successor coordinate is fully determined by the value of its predecessor but not vice versa. Thus, the values of the early coordinates fully determines what comes after it but not so in the other direction.
In Section \ref{section:LinearOrdersOnWorms} we provided a calculus to compute $o(A)$ for given $A$.
Thus, the results in the previous section provide sufficient information to fully calculate $\vec{\Omega}(A)$. 

However, the algorithm implicit in the current results are of a nature that all computations are performed globally: If we wish to compute $\Omega_\zeta(A)$, we need to compute the values of all its predecessors. Thus, first we compute $\Omega_0(A)=o_0(A)$, next we determine at what coordinates the sequence $\vec\Omega(A)$ changes up to $\zeta$. In the end we compute all the successive values of the coordinates where $\vec\Omega(A)$ changes to finally obtain $\Omega_\zeta(A)$.

We shall now see that each change in $\vec\Omega(A)$ is of similar nature so that successively computing the changes corresponds to a certain transfinite iteration. Recall that $\Omega_{\x +1}(A) = \le \Omega_\x (A)$ by Theorem \ref{theorem:SuccessorRelations}. We can see $\le$ as a natural left inverse of $e^1 = e^{-\x + (\x +1)}$ so that
\[
\begin{array}{rcl}
e^1\Omega_{\x +1}(A) &=&  \Omega_\x (A)\\
&\Rightarrow&\\
\le^1 e^1 \Omega_{\x +1}(A) &=&  \le^1\Omega_\x (A)\\
&\Rightarrow&\\
 \Omega_{\x +1}(A) &=& \le\Omega_\x (A).
\end{array}
\] 

If, more generally, for every $\vartheta$ we find an analogous left inverse $\le^\vartheta$ for $\ex^\vartheta$, then we may similarly obtain
\[
\begin{array}{rcl}
e^{-\x+\y} \Omega_{\y}(A) &=& \Omega_\x (A)\\
&\Rightarrow&\\
\le^{-\x+\y} e^{-\x+\y}\Omega_{\y}(A) &=& \le^{-\x+\y}  \Omega_\x\\
&\Rightarrow&\\
\Omega_{\y}(A) &=& \le^{-\x+\y} \Omega_\x (A)
\end{array}
\] 
when $\zeta, \x$ and $A$ are as in Theorem \ref{theorem3.6}. 

In \cite{FernandezJoosten:2012:Hyperations} the authors systematically study natural left-inverses of hyperations and call them \emph{cohyperations}. Once this is in place we can give a global calculus for our sequences, that is, a calculus that computes $\Omega_\zeta(A)$ in `one step' from $\Omega_0(A)$ or from any other previous coordinate.


\section{Hyperations and Cohyperations}\label{section:Cohyperations}

In this section we shall briefly state the main definitions and results from \cite{FernandezJoosten:2012:Hyperations} which are relevant for the current paper. With these at hand we can give a useful characterization of cohyperating the end-exponent function $\le$.

\subsection{Hyperations}
{\em Hyperation} is a form of transfinite iteration of normal functions. It is based on the additivity of finite iterations, that is $f^{m+n} = f^m\circ f^n$ generalizing this to the transfinite setting.

\begin{defi}[Weak hyperation]\label{definition:WeakHyperation}
A {\em weak hyperation} of a normal funcion $f$ is a family of normal functions $\langle g^{\x}\rangle_{\xi\in\mathsf{On}}$ such that
\begin{enumerate}
\item $g^0{\x}={\x}$ for all ${\x}$,
\item $g^1=f$,
\item $g^{{\x}+\zeta}=g^{\x} g^\zeta$.
\end{enumerate}
\end{defi}

\emph{Par abuse de langage} we will often write just $g^{\x}$ instead of $\langle g^{\x}\rangle_{\xi\in\mathsf{On}}$. Weak hyperations are not unique. However, if we impose a minimality condition, we can prove that there is a unique minimal hyperation.

\begin{defi}[Hyperation]
A weak hyperation $g^{\x}$ of $f$ is {\em minimal} if it has the property that, whenever $h^{\x}$ is a weak hyperation of $f$ and ${\x},\zeta$ are ordinals, then $g^{\x}\zeta\leq h^{\x}\zeta$.

If $f$ has a (unique) minimal weak hyperation, we call it {\em the hyperation} of $f$ and denote it $f^{\x}$.
\end{defi}

Hyperations allow for an explicit recursive definition very much in the style of Theorem \ref{theorem:recursiveSchemeForHyperexponentials}. Moreover, there turns out to be a close connection between hyperations and Veblen progressions as shown by the following two theorems.

\begin{theorem}\label{theorem:HyperExponentialsAndVeblen}
Let $f$ be a normal function and let $f_{\alpha}$ be the Veblen progression based on it. Given an ordinal $\alpha$, we have that $f^{\omega^\alpha} =f_\alpha$.
\end{theorem}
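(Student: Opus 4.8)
The plan is to argue by transfinite induction on $\alpha$, comparing in each case the normal function $f^{\omega^\alpha}$ with $f_\alpha$ by showing that the two enumerate the same club of ordinals. Throughout I would use that weak hyperations are additive, $f^{\xi+\zeta}=f^\xi\circ f^\zeta$, are monotone in the exponent (since $f^\zeta=f^\xi\circ f^{-\xi+\zeta}$ and $f^{-\xi+\zeta}(\theta)\geq\theta$), and obey the recursive characterization at additively indecomposable limit exponents analogous to Theorem \ref{theorem:recursiveSchemeForHyperexponentials}; on the Veblen side I would use that $f_{\alpha+1}$ enumerates the fixed points of $f_\alpha$ and that $f_\lambda$ enumerates $\bigcap_{\beta<\lambda}F_\beta$. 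The base case $\alpha=0$ is immediate, as $f^{\omega^0}=f^1=f=f_0$ by the weak hyperation axioms.

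For the successor case $\alpha=\beta+1$, assume $f^{\omega^\beta}=f_\beta$. Since $\omega^\beta+\omega^{\beta+1}=\omega^{\beta+1}$, additivity gives $f^{\omega^\beta}\circ f^{\omega^{\beta+1}}=f^{\omega^{\beta+1}}$, so every value of $f^{\omega^{\beta+1}}$ is fixed by $f^{\omega^\beta}=f_\beta$; as $f^{\omega^{\beta+1}}$ is normal and enumerates a subclass of $\mathrm{Fix}(f_\beta)=\mathrm{range}(f_{\beta+1})$ in increasing order, I obtain $f^{\omega^{\beta+1}}\ge f_{\beta+1}$ pointwise. For the reverse inequality I would use the recursion clause at the additively indecomposable exponent $\omega^{\beta+1}$: writing $\delta=f^{\omega^{\beta+1}}(\gamma)$, one has $f^{\omega^{\beta+1}}(\gamma+1)=\sup_{\mu<\omega^{\beta+1}}f^\mu(\delta+1)$, and since each $\mu<\omega^{\beta+1}$ lies below some $\omega^\beta\cdot n$ and $f^{\omega^\beta\cdot n}=(f_\beta)^{\circ n}$, monotonicity in the exponent shows this supremum equals $\sup_n(f_\beta)^{\circ n}(\delta+1)$, which is exactly the least fixed point of $f_\beta$ strictly above $\delta$, namely $f_{\beta+1}(\gamma+1)$. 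Together with continuity of both normal functions at limit arguments, induction on the argument yields $f^{\omega^{\beta+1}}=f_{\beta+1}$.

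The limit case $\alpha=\lambda$ is where I expect the main difficulty. One inclusion runs as before: for each $\beta<\lambda$ we have $\omega^\beta+\omega^\lambda=\omega^\lambda$, so $f^{\omega^\beta}\circ f^{\omega^\lambda}=f^{\omega^\lambda}$, whence $\mathrm{range}(f^{\omega^\lambda})\subseteq\bigcap_{\beta<\lambda}\mathrm{Fix}(f_\beta)=\mathrm{range}(f_\lambda)$ and thus $f^{\omega^\lambda}\ge f_\lambda$. The delicate point is to show that $f^{\omega^\lambda}$ meets \emph{every} common fixed point. Again I would extract this from the recursion at the indecomposable exponent $\omega^\lambda$: with $\delta=f^{\omega^\lambda}(\gamma)$ the value $\nu=f^{\omega^\lambda}(\gamma+1)=\sup_{\mu<\omega^\lambda}f^\mu(\delta+1)$ must be shown to be the least common fixed point of all $f_\beta$ above $\delta$. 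That $\nu$ is a common fixed point follows by computing, for each $\beta<\lambda$, $f^{\omega^\beta}(\nu)=\sup_{\mu}f^{\omega^\beta+\mu}(\delta+1)=\nu$, using that $\{\omega^\beta+\mu:\mu<\omega^\lambda\}$ is cofinal in $\omega^\lambda$ and that $f^\xi(\theta)$ is continuous in $\xi$ at this indecomposable limit; leastness follows since any common fixed point above $\delta$ dominates every $f^\mu(\delta+1)$.

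The step I expect to require the most care is precisely this continuity-in-the-exponent argument in the limit case, where one must interchange the supremum over $\mu$ with application of $f^{\omega^\beta}$; here I would lean on the corresponding properties of hyperation established in \cite{FernandezJoosten:2012:Hyperations}. The special case $f=e$ is exactly Lemma \ref{theorem:OrderOFWormsOfOmegaPowersAreFixedPoints}, which can serve as a sanity check on the whole scheme.
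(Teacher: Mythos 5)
Your overall inductive architecture is sound, and for $f=e$ it is exactly how one would verify Lemma \ref{theorem:OrderOFWormsOfOmegaPowersAreFixedPoints} from Definition \ref{theorem:recursiveSchemeForHyperexponentials}; your successor-exponent and limit-exponent computations (cofinality of $\{\omega^\beta+\mu:\mu<\omega^\lambda\}$ in $\omega^\lambda$, identification of $\sup_{n}f_\beta^{\circ n}(\delta+1)$ with the next fixed point) are correct as stated. But there is a genuine gap: your induction on the argument has no base case, and for a general normal $f$ the base case is not trivial. The clause $e^\alpha 0=0$ of the recursion you invoke has no analogue when $f(0)>0$, and additivity plus normality alone do \emph{not} determine $f^{\omega^\rho}(0)$ at additively indecomposable exponents --- non-minimal weak hyperations can take larger values there, which is precisely why minimality (equivalently, Theorem \ref{theorem:VeblenDefinesHyperation}) has content. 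Worse, the successor clause $f^\lambda(\beta+1)=\sup_{\lambda'<\lambda}f^{\lambda'}(f^\lambda(\beta)+1)$ presupposes that $f^\lambda(\beta)$ is already known, so without a clause at $0$ the recursion you lean on does not even determine the function you are inducting over. Your fixed-point arguments yield only the inequalities $f^{\omega^{\beta+1}}(0)\geq f_{\beta+1}(0)$ and $f^{\omega^\lambda}(0)\geq f_\lambda(0)$. The patch is routine but must be made: add the clause $f^{\omega^\rho}(0)=\sup_{\eta<\omega^\rho}f^\eta(0)$ (justified by minimality) and rerun your cofinality computation at $\delta+1$ replaced by $0$ to identify this supremum with the least fixed point of $f_\beta$, respectively the least common fixed point of all $f_\beta$ with $\beta<\lambda$.

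Two further remarks. First, this paper contains no proof of the theorem --- it is imported from \cite{FernandezJoosten:2012:Hyperations}, where the argument runs in the opposite direction: one \emph{constructs} the candidate hyperation from the Veblen progression, setting $f^\xi:=f_{\alpha_1}\circ\cdots\circ f_{\alpha_n}$ for $\xi=\omega^{\alpha_1}+\cdots+\omega^{\alpha_n}$ in Cantor normal form, checks the weak hyperation axioms (using $f_\alpha\circ f_\beta=f_\beta$ for $\alpha<\beta$ and CNF addition), and proves minimality by essentially your ``the range lands in the fixed points'' argument; the recursive scheme for general $f$ is there a \emph{corollary} of the identity you are proving. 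So citing that scheme as a black box is circular relative to the source. Your route becomes a legitimate alternative if you instead take the recursion (with the $0$-clause) as the definition, verify that each $f^\xi$ is normal and additive, and then run your induction; Theorem \ref{theorem:VeblenDefinesHyperation} then delivers minimality for free. Second, a minor point in your favor: the ``continuity in the exponent'' you flag as the delicate step is not actually needed --- the interchange $f^{\omega^\beta}\bigl(\sup_\mu f^\mu(\delta+1)\bigr)=\sup_\mu f^{\omega^\beta+\mu}(\delta+1)$ uses only normality of $f^{\omega^\beta}$ in its argument, weak monotonicity of $f^\mu$ in $\mu$, and the cofinality fact you already state.
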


\begin{theorem}\label{theorem:VeblenDefinesHyperation}
Let $g^{\xi}$ be a weak hyperation of a normal function $f$. If we moreover have that $g^{\omega^\alpha} =f_\alpha$ for each $\alpha$ then $g^{\xi}=f^\xi$.
\end{theorem}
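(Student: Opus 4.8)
The plan is to exploit that both $g^{\xi}$ and the hyperation $f^{\xi}$ are weak hyperations of $f$ which agree on every ordinal of the form $\omega^{\alpha}$, and then to propagate this agreement to all ordinals by means of the Cantor Normal Form together with the multiplicativity clause of weak hyperations.

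First I would record that the two families coincide at the additive principal numbers $\omega^{\alpha}$. By Theorem \ref{theorem:HyperExponentialsAndVeblen} we have $f^{\omega^{\alpha}} = f_{\alpha}$, while the hypothesis of the theorem gives $g^{\omega^{\alpha}} = f_{\alpha}$; hence
\[
g^{\omega^{\alpha}} = f^{\omega^{\alpha}} \qquad\text{for every } \alpha.
\]

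Next I would fix an arbitrary ordinal $\xi$ and invoke the Cantor Normal Form Theorem to write $\xi = \omega^{\alpha_1} + \cdots + \omega^{\alpha_n}$ with $\alpha_1 \geq \cdots \geq \alpha_n$. Since a weak hyperation satisfies $h^{\beta+\gamma} = h^{\beta} h^{\gamma}$ (clause 3 of Definition \ref{definition:WeakHyperation}), and since the CNF presents $\xi$ as an iterated ordinal sum, repeatedly splitting off the leading summand yields
\[
g^{\xi} = g^{\omega^{\alpha_1}} \circ \cdots \circ g^{\omega^{\alpha_n}}
\qquad\text{and}\qquad
f^{\xi} = f^{\omega^{\alpha_1}} \circ \cdots \circ f^{\omega^{\alpha_n}},
\]
the second equality holding because $f^{\xi}$ is itself a weak hyperation, being the minimal one. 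Substituting the coincidence at the $\omega^{\alpha_i}$ established above, the two right-hand sides are identical, so $g^{\xi} = f^{\xi}$. The base case $\xi = 0$ is the empty composition, covered by clause 1, namely $g^{0} = \mathrm{id} = f^{0}$.

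I do not expect a serious obstacle here: the entire content of the argument is the observation that, via clause 3 and the associativity of ordinal addition, a weak hyperation is completely determined by its values at the additive principal numbers. The only point demanding a little care is the bookkeeping of the induction on the number $n$ of CNF summands, where one applies the additivity clause to the split $\xi = \omega^{\alpha_1} + (\omega^{\alpha_2} + \cdots + \omega^{\alpha_n})$ and then recurses on the tail; once this is set up, the verification is routine.
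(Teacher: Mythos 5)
Your proof is correct. Note that the paper itself states Theorem \ref{theorem:VeblenDefinesHyperation} without proof, deferring to \cite{FernandezJoosten:2012:Hyperations}, so there is no in-paper argument to compare against; your route---agreement at the additively indecomposable ordinals via Theorem \ref{theorem:HyperExponentialsAndVeblen}, then propagation through the Cantor Normal Form using clause 3 of Definition \ref{definition:WeakHyperation} and induction on the number of summands, with $\xi=0$ handled by clause 1---is precisely the canonical argument, resting on the observation that a weak hyperation is determined by its values at the additive principal numbers. The only tacit point worth making explicit is that writing $f^{\xi}$ presupposes that the minimal weak hyperation of $f$ exists, which is not proved here either; but this existence is already presupposed by the statements of both theorems as given in the paper, so your appeal to Theorem \ref{theorem:HyperExponentialsAndVeblen} introduces no gap or circularity in context.
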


We will call the functions $e^\alpha$ hyperexponentials. They can be used to define weak normal forms. For example, given an ordinal ${\x}$, we say an expression
\[
{\x}=\sum_{i< I}\ex^{\alpha_i}\beta_i+n
\]
is a {\em Weak Hyperexponential Normal Form} if $I,n<\omega$, and for each $i+1<I$, both $\ex^{\alpha_i} \beta_i \geq \ex^{\alpha_{i+1}}\beta_{i+1}$ and $\beta_i<\ex^{\alpha_i}\beta_i$. Note that Weak Hyperexponential Normal Forms are typically not unique. For example $\omega^\omega = e^21=e^1\omega$. We do, however, have uniqueness if every $\alpha_i$ is of the form $\omega^\delta$.

\begin{lemma}\label{theorem:HyperexponentialNormalForms}
Every ordinal ${\x}>0$ has a weak hyperexponential normal form.

If we further require that every exponent be of the form $\omega^\delta$, then the WHNF obtained is unique.
\end{lemma}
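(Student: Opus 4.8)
The plan is to prove both halves by transfinite induction on $\xi$, in each case peeling off a uniquely determined leading term and recursing on a strictly smaller remainder. For existence I would first dispose of the case $\xi<\omega$ by taking $I=0$ and $n=\xi$. For $\xi\geq\omega$, I would take the leading term value to be $v$, the largest additively indecomposable ordinal with $v\leq\xi$ (equivalently, the leading term $\omega^{\gamma_1}$ of the Cantor Normal Form of $\xi$). The key observation is that $v$ is always a \emph{proper hyperexponential value}, by which I mean an ordinal of the form $e^{\omega^{\delta}}\beta$ with $\beta<e^{\omega^{\delta}}\beta$: indeed, using $e^{\omega^\delta}=e_\delta$ (Theorem \ref{theorem:HyperExponentialsAndVeblen}), the value $v$ lies in $\mathrm{Im}(e_\delta)$ for its maximal admissible level $\delta_0$, and at that level its preimage cannot be a fixpoint. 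I would then write $\xi=v+\xi_1$ with $\xi_1<\xi$, apply the inductive hypothesis to obtain a WHNF of $\xi_1$ with $\omega^\delta$-exponents, and prepend $v=e^{\omega^{\delta_0}}\beta_0$. Both side conditions are routine: $\beta_0<e^{\omega^{\delta_0}}\beta_0=v$ holds by construction, and the monotonicity $v\geq(\text{first term of }\xi_1)$ holds because the leading value of $\xi_1$ is again additively indecomposable and $\leq\xi_1\leq\xi$, hence $\leq v$.

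For uniqueness I would show that the leading term of \emph{any} WHNF of $\xi$ with $\omega^\delta$-exponents is forced. Since the term values are weakly decreasing, the leading value $v_1$ is the maximum term value; as it is additively indecomposable and the remaining finitely many terms are all $\leq v_1$, one gets $\xi<v_1\cdot\omega$. Because there is no additively indecomposable ordinal strictly between $v_1$ and $v_1\cdot\omega$, and the CNF-leading term $v$ satisfies $v_1\leq v\leq\xi<v_1\cdot\omega$, we conclude $v_1=v$, which depends only on $\xi$. It then remains to show that this fixed value $v$ admits a \emph{unique} decomposition $v=e^{\omega^{\delta_0}}\beta_0$ with $\beta_0$ non-fixed. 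I would set $\delta_0$ to be the largest $\delta$ with $v\in\mathrm{Im}(e_\delta)$; this is well defined since the images decrease in $\delta$ (as $e_\delta=e_{\delta'}\circ e_\delta$ for $\delta'<\delta$) and $\mathrm{Im}(e_\lambda)=\bigcap_{\delta<\lambda}\mathrm{Im}(e_\delta)$ at limits, so the admissible $\delta$ form an initial segment attaining its supremum. Normality of $e_{\delta_0}$ yields a unique $\beta_0$ with $v=e_{\delta_0}\beta_0$, and maximality of $\delta_0$, together with the fact that $e_{\delta_0+1}$ enumerates the fixpoints of $e_{\delta_0}$, forces $\beta_0$ to be non-fixed; conversely any non-fixed decomposition must sit at the maximal level, so $\delta=\delta_0$ and $\beta=\beta_0$. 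With leading term and remainder $\xi_1=-v+\xi<\xi$ both determined, the inductive hypothesis closes the argument.

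The hard part will be the hyperation-theoretic input rather than the bookkeeping. Concretely, I expect the obstacle to be establishing the three structural facts that drive the leading-term analysis: that every nonzero value of $e_\delta$ is additively indecomposable (so that ``largest proper hyperexponential value $\leq\xi$'' behaves like a leading CNF term); that $\langle\mathrm{Im}(e_\delta)\rangle_{\delta}$ is decreasing with intersections at limits (so that the level $\delta_0$ exists); and that $\mathrm{Im}(e_{\delta+1})$ is exactly the fixpoint set of $e_\delta$ (so that non-fixedness is equivalent to being at maximal level). The first follows by induction on $\delta$ from $\mathrm{Im}(e^1)\subseteq\{0\}\cup\{\omega^{1+\alpha}\mid\alpha\in\mathsf{On}\}$ and $\mathrm{Im}(e_{\delta+1})\subseteq\mathrm{Im}(e_\delta)$, while the latter two are precisely the defining properties of the Veblen progression $e_\delta$ transported through $e^{\omega^\delta}=e_\delta$. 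Once these are in place, the double induction on $\xi$ is mechanical.
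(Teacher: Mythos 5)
Your proof is correct, but it takes a genuinely different route from the paper's. The paper disposes of the lemma in two lines by translating to and from Veblen Normal Form: write $\xi$ in VNF, replace $\varphi_\alpha(\beta)$ by $e^{\omega^\alpha}(1+\beta)$ for $\alpha>0$ and $\varphi_0(\beta)$ by $e^1(\beta)$ for $\beta>0$, absorb the occurrences of $\varphi_0(0)=1$ into the trailing $+n$, and obtain uniqueness by inverting this translation, so that the VNF theorem is used as a black box. You instead re-derive the normal form from first principles inside the $e_\delta$-hierarchy: a transfinite induction that peels off the leading Cantor Normal Form term $v$ and rests on the existence and uniqueness of the maximal level $\delta_0$ with $v\in\mathrm{Im}(e_{\delta_0})$ together with a non-fixed preimage. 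In effect you re-prove, in the shifted hierarchy, the classical fact that every additively indecomposable ordinal is uniquely of the form $\varphi_\delta(\beta)$ with $\beta<\varphi_\delta(\beta)$, rather than importing it. The three structural inputs you flag are all correct and follow as you indicate from $e^{\omega^\alpha}=e_\alpha$ (Lemma \ref{theorem:OrderOFWormsOfOmegaPowersAreFixedPoints}, Theorem \ref{theorem:HyperExponentialsAndVeblen}) and the definition of Veblen progressions; indeed, your maximal-level decomposition is essentially the fact the paper itself later invokes (Proposition 6.3 of the hyperations paper) in the proof of Theorem \ref{theorem:endLogarithmHyperationRecursiveEquation}. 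What each approach buys: the paper's argument is very short but hides the fiddly bookkeeping in the translation (recall $E_0$ contains $0$ but omits $1=\varphi_0(0)$, whence the $1+\beta$ shifts and the special $+n$ treatment), while yours is longer but self-contained, avoids the offset gymnastics entirely, and makes the algorithmic content --- extraction of the forced leading term --- explicit. One small point you should make explicit: the maximal admissible level $\delta_0$ exists not only because the admissible levels form a closed initial segment (your decreasing-images and limit-intersection facts), but also because they are bounded; for instance, $e_\delta(1)$ is strictly increasing in $\delta$, so $v\in\mathrm{Im}(e_\delta)$ with $v>0$ forces $v\geq e_\delta(1)$ and hence bounds $\delta$.
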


\proof
Write ${\x}$ in Veblen Normal Form and replace $\varphi_\alpha(\beta)$ by $\ex^{\omega^\alpha}(1+\beta)$ for $\alpha >0$, $\varphi_0 (\beta)$ by $e^1(\beta)$ for $\beta>0$. The occurrences of $\varphi_0(0)$ can be captured in the term $+n$ in the end of a WHNF.

If all exponents are of the form $\omega^\delta$, we may invert the process to obtain a VNF from a given WHNF; the uniqueness of the latter follows from the uniqueness of the former.
\qed

\subsection{Cohyperations}

Hyperations are injective and hence invertible on the left; however, a left-inverse of a hyperation is typically not a hyperation, but a different form of transfinite iteration we call {\em cohyperation}. Instead of iterating normal functions we shall consider \emph{initial functions}. We will say a function $f$ is {\em initial} if, whenever $I$ is an initial segment (i.e., of the form $[0,\beta)$ for some $\beta$), then $f(I)$ is an initial segment. It is easy to see that $f\xi \leq \xi$ for initial functions $f$.


\begin{defi}[Cohyperation]
A {\em weak cohyperation} of an initial function $f$ is a family of initial functions $\langle g^{\x}\rangle_{\xi\in\mathsf{On}}$ such that
\begin{enumerate}
\item $g^0{\x}={\x}$ for all ${\x}$,
\item $g^1=f$,
\item $g^{{\x}+\zeta}=g^\zeta g^{\x}$.
\end{enumerate}

If $g$ is maximal in the sense that $g^{\x}\zeta\geq h^{\x}\zeta$ for every weak cohyperation $h$ of $f$ and all ordinals $\xi,\zeta$, we say $g$ is {\em the cohyperation} of $f$ and write $f^{\x}=g^{\x}$.
\end{defi}

Both hyperations and cohyperations are denoted using the superscript; however, this does not lead to a clash in notation as the only function that is both normal and initial is the identity.

There is a general recursive scheme to compute actual cohyperations in the spirit of Definition \ref{theorem:recursiveSchemeForHyperexponentials}. 
\begin{lemma}\label{theorem:RecursionCohyperations}
Every initial function $f$ has a unique cohyperation, given by
\begin{enumerate}
\item $f^0\alpha=\alpha$,
\item $f^1=f$,
\item $f^{\omega^\rho+{\x}}=f^{{\x}}f^{\omega^\rho}$ provided ${\x}<\omega^\rho+{\x}$,
\item $f^{\omega^\rho}{\x}=f^{\omega^\rho}f^\eta {\x}$, if $f^\eta{\x}<{\x}$ and $\eta<\omega^\rho$,
\item $f^{\omega^\rho}{\x}=\sup_{\zeta<{\x}}(f^{\omega^\rho}\zeta+1)$, if $f^\eta{\x}={\x}$ for all $\eta<\omega^\rho$, with $\rho>0$.
\end{enumerate}
\end{lemma}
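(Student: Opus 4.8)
The plan is to establish existence, uniqueness, and the correctness of the given recursive scheme for the cohyperation $f^{\xi}$ of an initial function $f$, treating these as three interlocking claims. I would first verify that the family defined by clauses (1)--(5) is well-defined by transfinite recursion: clause (3) reduces the computation of $f^{\omega^\rho+\xi}$ to strictly smaller exponents whenever $\xi>0$, so the only genuinely new cases are the additively indecomposable exponents $\omega^\rho$, which are handled by the mutually exclusive clauses (4) and (5) depending on whether $\xi$ is a fixpoint of all $f^{\eta}$ with $\eta<\omega^\rho$. I would check that the recursion in clause (4) terminates: since $f^{\eta}\xi<\xi$ strictly decreases the argument and ordinals are well-founded, iterating cannot descend forever, so the clause bottoms out at a fixpoint handled by (5) or at a value already computed.

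Next I would show that the family $\langle f^{\xi}\rangle$ so defined actually \emph{is} a weak cohyperation, i.e.\ that each $f^{\xi}$ is initial and that the three defining equations $f^{0}=\mathrm{id}$, $f^{1}=f$, and $f^{\xi+\zeta}=f^{\zeta}f^{\xi}$ hold. Initiality I would prove by transfinite induction on the exponent, using that a composition of initial functions is initial and that a suitably-defined supremum (clause (5)) preserves the initial-segment property. The additivity law $f^{\xi+\zeta}=f^{\zeta}f^{\xi}$ is the crux: I would prove it by induction on $\zeta$, splitting into the cases where $\zeta$ is $0$, additively decomposable (reducing via clause (3) and associativity of ordinal addition), or an additive power $\omega^\rho$, in which case clauses (3)--(5) must be shown to cohere with the already-established values. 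This is the step I expect to be the main obstacle, since the interaction between the fixpoint-driven clauses (4)/(5) and the composition law requires careful bookkeeping analogous to, but dual to, the hyperation argument of Definition~\ref{theorem:recursiveSchemeForHyperexponentials}.

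Finally I would establish maximality and uniqueness. For maximality I would take any weak cohyperation $h$ of $f$ and prove $f^{\xi}\zeta\geq h^{\xi}\zeta$ for all $\xi,\zeta$ by transfinite induction on $\xi$; the successor-power and limit-power clauses (4) and (5) are defined precisely as the largest values consistent with initiality and the additive law, so the induction hypothesis together with monotonicity of composition forces the inequality. Uniqueness of \emph{the} cohyperation is then immediate, since maximality pins down the family completely: any two maximal weak cohyperations dominate each other and hence coincide. I would remark that this entire development is the order-dual of the hyperation theory summarized earlier, with $f^{\xi}\zeta\leq\zeta$ replacing the normality growth condition and suprema replacing the minimal-value choices, so the proofs transpose \emph{mutatis mutandis} from \cite{FernandezJoosten:2012:Hyperations}, which I would cite for the full details.
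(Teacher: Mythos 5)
First, a point of comparison: the paper does not prove this lemma at all --- it is stated as a summary of results imported from \cite{FernandezJoosten:2012:Hyperations}, and the only commentary the paper offers is a warning immediately after the statement, namely that it is not even clear that clause (4) is single-valued, since there may be several $\eta<\omega^\rho$ with $f^\eta\xi<\xi$, and that the cited paper shows the choice of $\eta$ does not matter. Your overall architecture is sound and matches the development there: recursion organized lexicographically on (exponent, argument), with clause (3) reducing to the additively indecomposable exponents; verification that the resulting family is a weak cohyperation; and maximality, for which your key observation is exactly right --- an initial function $g$ must satisfy $g(\xi)\leq\sup_{\zeta<\xi}(g(\zeta)+1)$, so clause (5) assigns the largest value compatible with initiality, while clause (4) is outright forced on any weak cohyperation because $\eta+\omega^\rho=\omega^\rho$ gives $g^{\omega^\rho}=g^{\omega^\rho}g^{\eta}$. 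Uniqueness from mutual domination of maximal families is then trivial, as you say.

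The genuine gap is precisely the subtlety the paper flags and you do not address: well-definedness of clause (4) is a \emph{coherence} problem, not a termination problem. Your paragraph on well-definedness argues that iterating $\xi\mapsto f^\eta\xi$ cannot descend forever; but clause (4) is a single rewriting step to a strictly smaller argument, already covered by the recursion hypothesis, so termination was never in doubt. What is in doubt is single-valuedness: for two admissible ordinals $\eta_1\leq\eta_2<\omega^\rho$ with $f^{\eta_1}\xi<\xi$ and $f^{\eta_2}\xi<\xi$, one must show $f^{\omega^\rho}f^{\eta_1}\xi=f^{\omega^\rho}f^{\eta_2}\xi$, and the natural route uses $f^{\eta_2}=f^{(-\eta_1+\eta_2)}f^{\eta_1}$ --- that is, the very additivity law you postpone as ``the main obstacle.'' So coherence of (4), initiality of each $f^\xi$ (which is itself assembled from clauses (4) and (5) at different arguments), and the law $f^{\xi+\zeta}=f^{\zeta}f^{\xi}$ cannot be established in sequence; they must be proved by one simultaneous transfinite induction. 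Your sketch ultimately defers exactly this interlocking step \emph{mutatis mutandis} to \cite{FernandezJoosten:2012:Hyperations}; since that is where the entire content of the lemma lives, the proposal as written is an accurate plan but not yet a proof, and its one concrete misstep is diagnosing clause (4) as a descent issue rather than a single-valuedness issue.
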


At first glance it is not even clear that $f^{\x}$ is well defined in that it is single valued. In Item 4., there might be various $\eta$'s below $\omega^{\rho}$ so that $f^{\eta}\x < \x$. In \cite{FernandezJoosten:2012:Hyperations} it is shown that it does not matter which $\eta$ one takes.

Let $f$ be a normal function. Then, $g$ is a {\em left adjoint} for $f$ if, for all ordinals $\alpha,\beta$,
\begin{enumerate}
\item if $\alpha= f(\beta)$, then $g(\alpha)=\beta$ and
\item if $\alpha< f(\beta)$, then $g(\alpha)<\beta$.
\end{enumerate}

Left-adjoints are natural left-inverses and cohyperating them yields left-adjoints to the corresponding hyperations in a uniform way:

\begin{theorem}\label{cancel}
Given a normal function $f$ with left adjoint $g$ and ordinals $\xi<\zeta$ and $\alpha$, $g^\xi f^\zeta=f^{-\xi+\zeta}$ and $g^\zeta f^\x=g^{-\xi+\zeta}$.
\end{theorem}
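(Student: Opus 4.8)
The plan is to reduce both identities in Theorem \ref{cancel} to the single \emph{cancellation identity} $g^\xi f^\xi = \mathrm{id}$, and then to establish the latter by transfinite induction on $\xi$.

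First I would record the reduction. By additivity of hyperation (Definition \ref{definition:WeakHyperation}), for $\xi<\zeta$ we have $f^\zeta = f^\xi f^{-\xi+\zeta}$, so $g^\xi f^\zeta = (g^\xi f^\xi)f^{-\xi+\zeta} = f^{-\xi+\zeta}$ once $g^\xi f^\xi = \mathrm{id}$ is known. Dually, since cohyperation composes in the reverse order, $g^{\alpha+\beta}=g^\beta g^\alpha$, we have $g^\zeta = g^{-\xi+\zeta}g^\xi$, hence $g^\zeta f^\xi = g^{-\xi+\zeta}(g^\xi f^\xi) = g^{-\xi+\zeta}$. Thus everything hinges on the cancellation identity.

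For the induction itself, the base case $\xi\in\{0,1\}$ is immediate: $g^0 f^0 = \mathrm{id}$, and $g^1 f^1 = g f = \mathrm{id}$ since the left-adjoint property gives $g(f(\beta))=\beta$ for all $\beta$. For the successor step, using $f^{\xi+1}=f^\xi f$ and $g^{\xi+1}=g\,g^\xi$ with the induction hypothesis yields $g^{\xi+1}f^{\xi+1} = g(g^\xi f^\xi)f = gf = \mathrm{id}$. For a general limit $\lambda$, I would write $\lambda = \mu + \omega^\rho$ with $\omega^\rho$ its last additive component; then $f^\lambda = f^\mu f^{\omega^\rho}$ and $g^\lambda = g^{\omega^\rho}g^\mu$, so $g^\lambda f^\lambda = g^{\omega^\rho}(g^\mu f^\mu)f^{\omega^\rho} = g^{\omega^\rho}f^{\omega^\rho}$ by the induction hypothesis applied to $\mu<\lambda$. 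This reduces the limit case to additively indecomposable $\lambda = \omega^\rho$.

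The heart of the argument, and the step I expect to be the main obstacle, is therefore $g^{\omega^\rho}f^{\omega^\rho}=\mathrm{id}$. Fixing $\alpha$ and setting $\beta = f^{\omega^\rho}\alpha$, I want $g^{\omega^\rho}\beta=\alpha$. Since $\eta+\omega^\rho=\omega^\rho$ for $\eta<\omega^\rho$, additivity gives $f^\eta\beta = f^\eta f^{\omega^\rho}\alpha = \beta$, so $\beta$ is a common fixed point of every $f^\eta$ with $\eta<\omega^\rho$; the induction hypothesis $g^\eta f^\eta=\mathrm{id}$ then forces $g^\eta\beta = g^\eta f^\eta\beta = \beta$ as well. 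Hence $\beta$ falls under clause 5 of the cohyperation recursion (Lemma \ref{theorem:RecursionCohyperations}), $g^{\omega^\rho}\beta = \sup_{\zeta<\beta}(g^{\omega^\rho}\zeta+1)$, while inputs below $\beta$ that are not common fixed points are contracted by clause 4 down to smaller common fixed points. Recalling that $f^{\omega^\rho}=f_\rho$ (Theorem \ref{theorem:HyperExponentialsAndVeblen}) enumerates precisely these common fixed points, an inner induction on $\beta$ shows that $g^{\omega^\rho}$ acts on them as the inverse enumeration, sending the $\alpha$-th common fixed point to $\alpha$, which gives $g^{\omega^\rho}\beta=\alpha$ and completes the proof. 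The delicate points to check carefully are that the common fixed points of the $f^\eta$ (for $\eta<\omega^\rho$) coincide with $\mathrm{Im}(f^{\omega^\rho})$, and that the suprema in clause 5, taken over all $\zeta<\beta$ rather than only over fixed points, indeed collapse to $\alpha$.
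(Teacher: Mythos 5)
Your overall architecture is sound and is essentially the intended route: the theorem is imported by this paper from \cite{FernandezJoosten:2012:Hyperations} without proof, and there it follows from the general principle behind Theorem \ref{logexp}, namely that $g^\xi$ is a left adjoint of $f^\xi$ for every $\xi$; your reduction of both identities to cancellation via $f^\zeta=f^\xi f^{-\xi+\zeta}$ and $g^\zeta=g^{-\xi+\zeta}g^\xi$ is exactly right. However, there is a genuine gap in the heart of your induction: the hypothesis $g^\eta f^\eta=\mathrm{id}$ is too weak to drive the $\omega^\rho$ step. To apply clause 4 of Lemma \ref{theorem:RecursionCohyperations} to an input $\zeta<\beta$ that is not a common fixed point, you must exhibit some $\eta<\omega^\rho$ with $g^\eta\zeta<\zeta$; but knowing only that $g^\eta$ inverts $f^\eta$ \emph{on its image} says nothing about $g^\eta\zeta$ for $\zeta\notin\mathrm{Im}(f^\eta)$ --- an arbitrary initial left-inverse may well fix such a $\zeta$, in which case clause 4 is inapplicable and your collapse of the clause-5 supremum to $\alpha$ fails. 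The same weakness recurs at the end of the contraction argument: when the clause-4 iteration terminates at some $\zeta^\ast$, all you get is $g^\eta\zeta^\ast=\zeta^\ast$ for every $\eta<\omega^\rho$, and passing from $g$-fixedness to $f$-fixedness (so that $\zeta^\ast\in\mathrm{Im}(f^{\omega^\rho})$ and your inner induction hypothesis applies) requires the \emph{second} clause of the left-adjoint definition, i.e.\ $\zeta^\ast<f^\eta\zeta^\ast\Rightarrow g^\eta\zeta^\ast<\zeta^\ast$, which is precisely what $g^\eta f^\eta=\mathrm{id}$ does not give you. You flagged the supremum computation as delicate, but the missing ingredient is specifically adjointness clause 2 for the iterates, not the identification of fixed points.

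The repair is to strengthen the induction hypothesis from ``$g^\eta f^\eta=\mathrm{id}$'' to ``$g^\eta$ is a left adjoint of $f^\eta$,'' carrying both adjointness clauses through the same CNF-based case analysis. Then, for $\zeta$ not a common fixed point, choosing $\eta<\omega^\rho$ with $\zeta<f^\eta\zeta$ immediately yields $g^\eta\zeta<\zeta$, both holes close, and clause 2 for $g^{\omega^\rho}$ itself comes out of the same supremum computation; your reduction then delivers both identities of the theorem. Two smaller remarks. First, beware that in this paper's source the macro $\le$ denotes the end-logarithm $\ell$, not inequality. Second, your inner induction's base case silently needs $f^{\omega^\rho}(0)=0$: if one had $f(0)>0$, then $\beta=f^{\omega^\rho}(0)>0$ and clause 5 would give $g^{\omega^\rho}\beta\geq g^{\omega^\rho}(0)+1=1\neq 0$, breaking cancellation at $\alpha=0$. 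This is harmless --- applying clause 2 of the adjoint definition with $\alpha=0<f(0)$ shows no left adjoint can exist unless $f(0)=0$, so the hypothesis of the theorem forces it --- but it deserves an explicit line, since it explains why the theorem is confined to normal functions fixing $0$ (compare the paper's choice of $0\in E_0$).
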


\begin{theorem}\label{logexp}
The function $\le$ is a left adjoint to $\ex$, and thus $\le^\xi$ is left adjoint to $\ex^\x$ for all $\x$.
\end{theorem}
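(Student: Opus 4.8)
The plan is to prove the statement in two stages, mirroring its own ``and thus'' structure. First I would settle the base case, namely that $\le$ is a left adjoint to $\ex$; then I would obtain the general claim relating $\le^\x$ and $\ex^\x$ from the uniform transfer result imported from \cite{FernandezJoosten:2012:Hyperations} and flagged in the remark preceding Theorem \ref{cancel}, to the effect that cohyperating a left adjoint produces a left adjoint to the corresponding hyperation. The only genuinely new work within this excerpt is thus the base case, and everything rests on unwinding the two relevant definitions: $\ex$ enumerates $E_0=\{0\}\cup\{\omega^{1+\alpha}\mid\alpha\in{\sf On}\}$, so that $\ex(0)=0$ and $\ex(\x)=\omega^{\x}$ for $\x>0$; and $\le\eta$ is the exponent of the last term in the Cantor Normal Form of $\eta$, with the convention $\le 0=0$.

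For the base case I would verify the two clauses of the definition of left adjoint directly. For clause~(1), assuming $\alpha=\ex(\beta)$: if $\beta=0$ then $\alpha=0$ and $\le 0=0=\beta$; if $\beta>0$ then $\alpha=\omega^{\beta}$ and $\le\omega^{\beta}=\beta$, since $\omega^{\beta}=0+\omega^{\beta}$ is already of the required shape. For clause~(2), assuming $\alpha<\ex(\beta)$: the case $\beta=0$ is vacuous as $\ex(0)=0$, so I take $\beta>0$, giving $\alpha<\omega^{\beta}$; if $\alpha=0$ then $\le\alpha=0<\beta$, and otherwise the CNF of $\alpha$ has all its exponents strictly below $\beta$ (since $\alpha<\omega^{\beta}$), so in particular the last exponent $\le\alpha$ satisfies $\le\alpha<\beta$. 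This establishes both clauses and hence that $\le$ is a left adjoint to $\ex$.

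With the base case in hand, the general statement follows from the transfer principle for cohyperations: since $\le=\le^1$ is a left adjoint to the normal function $\ex=\ex^1$, the cohyperation $\le^\x$ is a left adjoint to the hyperation $\ex^\x$ for every $\x$. I do not expect a real obstacle here: the base-case computation is routine ordinal arithmetic, and the substantive content is the imported machinery. The only point requiring care is making sure the transfer principle is invoked at the right level of generality, which amounts to checking that $\ex$ genuinely is normal (being the enumerating function of the closed unbounded class $E_0$) and that $\le$ meets exactly the adjointness hypotheses demanded by the cohyperation results, so that Theorem \ref{cancel} and the surrounding theory of \cite{FernandezJoosten:2012:Hyperations} apply verbatim.
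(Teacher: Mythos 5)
Your proposal is correct and follows exactly the route the paper intends: the paper states this theorem without an explicit proof, importing it from \cite{FernandezJoosten:2012:Hyperations}, and the intended argument is precisely your two-stage one --- the routine verification that $\le$ is a left adjoint to $\ex$ (your case analysis on $\beta=0$ versus $\beta>0$, and the CNF-exponent bound for clause~(2), is sound), followed by the transfer principle that cohyperating a left adjoint yields left adjoints to the corresponding hyperations. Your closing caveats (normality of $\ex$ as the enumerator of the club class $E_0$, and that $\le$ satisfies the hypotheses of the cohyperation machinery) are exactly the right points to check, so there is nothing to correct.
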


For the cohyperation of $\le$ we give the following easy recursive scheme.

\begin{theorem}\label{theorem:endLogarithmHyperationRecursiveEquation}
For ordinals $\xi,\zeta$, the value of $\le^\xi\zeta$ is given by the following recursion:
\begin{enumerate}
\item $\le^0\alpha=\alpha$,
\item $\le^\xi n=0$ for $n\in \omega$ and $\xi >0$,
\item $\le^\xi(\alpha+\omega^\beta)=\le^\xi\omega^\beta$ if $\xi >0$,
\item $\le^{\omega^\rho+\xi}=\le^{\xi}\le^{\omega^\rho}$ provided $\xi<\omega^\rho+\xi$,
\item $\le^{\omega^\rho}\ex^{\omega^\beta}\xi=\begin{cases}
\ex^{\omega^\beta}\xi & \mbox{if $\omega^\rho < \omega^\beta$,}\\
\xi & \mbox{if $\omega^\rho= \omega^\beta$,}\\
\le^{\omega^\rho} \xi & \mbox{in case $\omega^\rho > \omega^\beta$.}
\end{cases}$
\end{enumerate}
\end{theorem}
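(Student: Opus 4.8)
The plan is to verify the five clauses by exhibiting each as an instance of, or an easy consequence of, the general cohyperation recursion (Lemma~\ref{theorem:RecursionCohyperations}) together with the adjunction results (Theorems~\ref{logexp} and~\ref{cancel}). Throughout, $\le^\xi$ denotes the cohyperation of the initial function $\le$, so it obeys the weak-cohyperation identities; in particular $\le^0=\mathrm{id}$, $\le^1=\le$, and $\le^{\mu+\nu}=\le^\nu\le^\mu$, and each $\le^\xi$ is initial, whence $\le^\xi 0=0$.

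Clause~1 is just $g^0=\mathrm{id}$, and Clause~4 is the composition identity $g^{\omega^\rho+\xi}=g^\xi g^{\omega^\rho}$ read off directly; no work is needed beyond matching notation, the proviso $\xi<\omega^\rho+\xi$ merely guaranteeing that the decomposition is genuine. For Clauses~2 and~3 I would exploit the factorisation available for any $\xi>0$: writing $\eta:=-1+\xi$ so that $1+\eta=\xi$, the composition law gives $\le^\xi=\le^{1+\eta}=\le^\eta\le^1=\le^\eta\circ\le$. Then Clause~2 follows from $\le n=0$ for finite $n$ (for $n>0$ since $n=(n-1)+\omega^0$, and $\le 0=0$ by convention) together with $\le^\eta 0=0$; and Clause~3 follows because $\le(\alpha+\omega^\beta)=\beta=\le(\omega^\beta)$ by Lemma~\ref{theorem:BasicPropertiesOrdinalArithmetic}, so both sides collapse to $\le^\eta\beta$.

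The substance is Clause~5, which I would obtain from the adjunction machinery rather than from the raw recursion. By Theorem~\ref{logexp}, $\le^{\omega^\rho}$ is a left adjoint of the hyperation $\ex^{\omega^\rho}$ for every $\rho$. For the middle case $\omega^\rho=\omega^\beta$, the defining property of a left adjoint gives $\le^{\omega^\beta}\ex^{\omega^\beta}\xi=\xi$ at once. For the two remaining cases I would invoke the cancellation Theorem~\ref{cancel}. When $\omega^\rho<\omega^\beta$, its first identity yields $\le^{\omega^\rho}\ex^{\omega^\beta}=\ex^{-\omega^\rho+\omega^\beta}$, and since $\rho<\beta$ forces $\omega^\rho+\omega^\beta=\omega^\beta$ we get $-\omega^\rho+\omega^\beta=\omega^\beta$, hence $\ex^{\omega^\beta}$. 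When $\omega^\rho>\omega^\beta$, its second identity yields $\le^{\omega^\rho}\ex^{\omega^\beta}=\le^{-\omega^\beta+\omega^\rho}$, and $\beta<\rho$ forces $-\omega^\beta+\omega^\rho=\omega^\rho$, hence $\le^{\omega^\rho}$. Applying these function identities to the argument $\xi$ closes all three cases.

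The one point that requires care — and the likeliest source of error — is Clause~5: Theorem~\ref{cancel} is stated only under a strict inequality between the two exponents, so the equal case $\omega^\rho=\omega^\beta$ cannot be read off from it and must be treated separately through the bare left-adjoint property; moreover one must correctly match the two directions $g^\xi f^\zeta$ and $g^\zeta f^\xi$ of Theorem~\ref{cancel} to the two strict cases and compute the left-subtractions $-\omega^\rho+\omega^\beta$ and $-\omega^\beta+\omega^\rho$ via the absorption law $\omega^\mu+\omega^\nu=\omega^\nu$ for $\mu<\nu$. Everything else is routine bookkeeping within the framework already in place.
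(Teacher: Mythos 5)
Your verification that the cohyperation $\le^\xi$ satisfies the five clauses is correct and, for that half, essentially the paper's own argument: the same factorisation $\le^\xi=\le^{\xi'}\circ\le$ with $\xi=1+\xi'$ for Clauses 2 and 3, the trivial reading-off of Clauses 1 and 4 from Lemma~\ref{theorem:RecursionCohyperations}, and Theorems~\ref{logexp} and~\ref{cancel} for Clause 5. Your separate treatment of the equal-exponent case through the bare left-adjoint identity is in fact more careful than the paper's terse citation (Theorem~\ref{cancel} is indeed stated only for strict inequality), and your matching of the two directions of Theorem~\ref{cancel} to the two strict cases, with the absorptions $-\omega^\rho+\omega^\beta=\omega^\beta$ and $-\omega^\beta+\omega^\rho=\omega^\rho$, is right.

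However, you have proved only half of the theorem. The statement asserts that the value of $\le^\xi\zeta$ \emph{is given by} the recursion, and Clause 5 is not a deterministic rewriting rule: an ordinal may be representable as $\ex^{\omega^\beta}\xi$ in several ways, with different $\beta$ and $\xi$, and for some representations the clause is vacuous. For instance, $\varepsilon_0=\ex^{\omega^0}\varepsilon_0$ yields only the circular identity $\le^{\omega^\rho}\varepsilon_0=\le^{\omega^\rho}\varepsilon_0$ when $\rho\geq 1$, whereas the representation $\varepsilon_0=\ex^{\omega^1}1$ gives $\le^{\omega}\varepsilon_0=1$ via the middle case. So one must additionally show that the recursion has a unique solution, i.e.\ that the clauses actually pin down the function; the paper devotes the entire second half of its proof to this. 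It takes the maximal $\alpha$ with $\ex^{\omega^\beta}\xi=\ex^{\alpha}\zeta$ (Proposition 6.3 of \cite{FernandezJoosten:2012:Hyperations}), writes $\alpha=\omega^{\alpha_1}+\ldots+\omega^{\alpha_n}$ in Cantor normal form, and uses that $\ex^{\omega^{\alpha}}\zeta$ is a fixpoint of $\ex^{\omega^\beta}$ for $\beta<\alpha_1$ (Theorem~\ref{theorem:HyperExponentialsAndVeblen}), together with injectivity of $\ex^{\omega^{\alpha_1}}$ when $\beta=\alpha_1$, to conclude that $\le^{\omega^\rho}\ex^{\omega^\beta}\xi=\le^{\omega^\rho}\ex^{\alpha}\zeta$ independently of the chosen representation. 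Note that soundness alone does give \emph{consistency} of all instances of Clause 5 (each equals the true cohyperation value), but it does not show that the clauses \emph{determine} the value; without the uniqueness argument your proof establishes that $\le^\xi$ satisfies the listed equations, not that the recursion characterizes it, which is how the theorem is used in Section~\ref{section:ComputingOmegaCoordinates}.
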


\proof
We shall first see that the recursive scheme of the unique cohyperation of $\le$ as given in Lemma \ref{theorem:RecursionCohyperations} satisfies the recursion of the current theorem. Next, we shall see that the recursion of this theorem has a unique solution. The latter is necessary as we note that it is not fully determined how the last item of the recursion is to be applied, as an ordinal $\zeta$ might be representable as $e^{\omega^\beta} \xi$ in various ways using different $\beta$ and $\xi$. 

That $\le^0\alpha=\alpha$ follows directly from Lemma  \ref{theorem:RecursionCohyperations}. Any $\xi >0$ can be written as $1+\xi'$ so that
\[
\le^{\xi} (\alpha+\omega^\beta) = \le^{1 + \xi'} (\alpha+\omega^\beta) = \le^{\xi'} \le (\alpha+\omega^\beta)= \le^{\xi'} \le \omega^\beta = \le^{1+ \xi'}  \omega^\beta =\le^{\xi}  \omega^\beta.
\]
From this, it directly follows that $\le^\xi n = 0$ for any $\xi>0$ and $n\in \omega$.  Item 4 of the recursion holds trivially. Item 5 follows directly from 
Theorem \ref{logexp} and Theorem \ref{cancel}.

We shall now show unicity. It is clear that we only need to focus on Item 5. 
Thus, we consider  $\le^{\omega^\rho}\ex^{\omega^\beta}\xi$. In \cite{Joosten:2013:VeblenAndTheWorm} and in Proposition 6.3 of \cite{FernandezJoosten:2012:Hyperations} it is shown that  there is a maximal $\alpha$ such that $\ex^{\omega^\beta}\xi = \ex^\alpha \zeta$ for some $\zeta$. We shall prove that $\le^{\omega^\rho}\ex^{\omega^\beta}\xi =\le^{\omega^\rho}\ex^\alpha \zeta$.

Let $\omega^{\alpha_1} + \ldots + \omega^{\alpha_n} =_{\sf CNF} \alpha$ for this particular $\alpha$.  By maximality of $\alpha$, we see that $\beta\leq \alpha_1$. In case $\beta < \alpha_1$ we see by Theorem \ref{theorem:HyperExponentialsAndVeblen} that $e^{\omega^{\alpha}}\zeta$ is a fixpoint of $e^{\omega^{\beta}}$ so that 
\[
\ex^{\omega^\beta}\xi = \ex^\alpha \zeta = e^{\omega^{\beta}} e^{\omega^{\alpha}}\zeta = e^{\omega^{\alpha}}\zeta,
\] 
whence also $\le^{\omega^\rho}\ex^{\omega^\beta}\xi =\le^{\omega^\rho}\ex^\alpha \zeta$.

In case $\beta =\alpha_1$, we see also have that $\le^{\omega^\rho}\ex^{\omega^\beta}\xi =\le^{\omega^\rho}\ex^\alpha \zeta$ as $e^{\omega^{\alpha_1}}$ is injective.
\qed

We will refer to the functions $\le^\xi$ as {\em hyperlogarithms}.

\subsection{Exact sequences}

A nice feature of cohyperations is that, in a sense, they need only be defined locally. To make this precise, we introduce the notion of an {\em exact sequence}.

\begin{defi}
Let $g^{\x}$ be a cohyperation, and $f:\Lambda\to\Theta$ be an ordinal function.

Then, we say $f$ is {\em $g$-exact} if, given ordinals ${\x},\zeta$ with $\x + \zeta < \Lambda$, $f({\x}+\zeta)=g^\zeta f({\x})$.
\end{defi}

A $g$-exact function $f$ describes the values of $g^{\x} f(0)$. However, for $f$ to be $g$-exact, we need only check a fairly weak condition:

\begin{lemma}\label{tfae}
The following are equivalent:
\begin{enumerate}
\item $f$ is $g$-exact
\item for every ordinal ${\x},$ $f({\x})=g^{\x} f(0)$
\item for every ordinal $\zeta>0$ there is ${\x}<\zeta$ such that $f(\zeta)=g^{-{\x}+\zeta}f({\x})$.
\end{enumerate}
\end{lemma}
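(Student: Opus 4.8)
The statement to prove is Lemma \ref{tfae}, asserting the equivalence of three conditions on an ordinal function $f$ relative to a cohyperation $g^\xi$: being $g$-exact, satisfying $f(\xi) = g^\xi f(0)$ everywhere, and satisfying the weaker local condition that for every $\zeta > 0$ there is some $\x < \zeta$ with $f(\zeta) = g^{-\x+\zeta} f(\x)$.

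\medskip

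The plan is to prove the cycle of implications $(1) \Rightarrow (2) \Rightarrow (3) \Rightarrow (1)$. The implication $(1) \Rightarrow (2)$ is essentially immediate: taking $\x = 0$ and $\zeta = \x$ in the definition of $g$-exactness gives $f(0 + \x) = g^\x f(0)$, which is exactly condition $(2)$, using that $g^0$ is the identity so the base case $f(0) = g^0 f(0)$ holds trivially. The implication $(2) \Rightarrow (3)$ is also direct: given $\zeta > 0$, pick any $\x < \zeta$ (for instance $\x = 0$); then by $(2)$ applied at both $\zeta$ and $\x$ together with the cohyperation additivity law $g^{\x+(-\x+\zeta)} = g^{-\x+\zeta} g^{\x}$ from Definition of cohyperation (note the reversed composition order), we compute
\[
g^{-\x+\zeta} f(\x) = g^{-\x+\zeta} g^{\x} f(0) = g^{\x + (-\x+\zeta)} f(0) = g^\zeta f(0) = f(\zeta),
\]
which is condition $(3)$.

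\medskip

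The substantive direction is $(3) \Rightarrow (1)$, and this is where I expect the main obstacle to lie. I would prove $(1)$ by transfinite induction, establishing the statement $P(\x)$: ``for all $\zeta$, $f(\x + \zeta) = g^\zeta f(\x)$''. The $\zeta = 0$ case is trivial. For the inductive content, the natural approach is to first strengthen $(3)$ into $(2)$ by induction on $\zeta$: assuming $f(\x') = g^{\x'} f(0)$ for all $\x' < \zeta$, condition $(3)$ supplies some $\x < \zeta$ with $f(\zeta) = g^{-\x+\zeta} f(\x)$, and then the induction hypothesis gives $f(\x) = g^\x f(0)$, so that $f(\zeta) = g^{-\x+\zeta} g^\x f(0) = g^{\x + (-\x + \zeta)} f(0) = g^\zeta f(0)$, again invoking the additivity law with the correct composition order. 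Thus $(3) \Rightarrow (2)$. Finally $(2) \Rightarrow (1)$ follows by the same additivity computation used in $(2)\Rightarrow(3)$: for arbitrary $\x, \zeta$,
\[
g^\zeta f(\x) = g^\zeta g^\x f(0) = g^{\x + \zeta} f(0) = f(\x + \zeta).
\]
The main thing to be careful about throughout is the \emph{order of composition} in the cohyperation law $g^{\x + \zeta} = g^\zeta g^\x$ (as opposed to the hyperation convention $g^{\x+\zeta} = g^\x g^\zeta$), since getting this backwards would derail every computation; I would verify each application against the definition of weak cohyperation. It would also be cleanest to route the argument as $(1)\Rightarrow(2)\Rightarrow(3)\Rightarrow(2)\Rightarrow(1)$, collapsing to the genuine equivalences, with the transfinite induction concentrated entirely in the single step $(3)\Rightarrow(2)$.
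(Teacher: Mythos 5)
Your proof is correct, and it is essentially the intended argument: the paper states Lemma \ref{tfae} without proof (it is quoted from the companion paper on hyperations), and your route $(1)\Rightarrow(2)\Rightarrow(3)\Rightarrow(2)\Rightarrow(1)$, with the transfinite induction concentrated in $(3)\Rightarrow(2)$ and the correctly reversed cohyperation law $g^{\xi+\zeta}=g^{\zeta}g^{\xi}$ doing all the computational work, is exactly the standard argument --- and rightly needs no continuity or monotonicity of $f$, since condition $(3)$ itself supplies the witness at limit stages. The only cosmetic caveat is that, since $f:\Lambda\to\Theta$, the quantifiers in $(2)$ and $(3)$ and your induction should be read as restricted to ordinals below $\Lambda$, a point the paper's statement also glosses over.
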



\section{A global characterization}\label{section:ComputingOmegaCoordinates}

In this section we shall unify the results obtained so far by describing the sequences $\vec\Omega({A})$ using hyperexponentials and -logarithms.

\begin{theorem}\label{istheexact}
Let ${A}$ be a worm.

Then, $\vec\Omega({A})$ is the unique $\le$-exact sequence with $\Omega_0({A})=o({A})$.
\end{theorem}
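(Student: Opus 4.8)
The plan is to lean almost entirely on the local results of Section \ref{section:OmegaSequences} together with the machinery of Lemma \ref{tfae}, which reduces $\le$-exactness to a purely local check. First I would record the base value: specializing Lemma \ref{theorem:OmegaReducesToO} to $\xi=0$ gives $\Omega_0(A)=o_0 h_0(A)=o(A)$, since $h_0(A)=A$ and $o_0=o$. Granting $\le$-exactness of $\vec\Omega(A)$, uniqueness is then free: by the equivalence $(1)\Leftrightarrow(2)$ of Lemma \ref{tfae}, any $\le$-exact sequence $f$ obeys $f(\xi)=\le^\xi f(0)$ and so is determined by $f(0)$; since our sequence has $\Omega_0(A)=o(A)$, it must be the unique $\le$-exact sequence with that initial value.

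The core of the argument is to verify $\le$-exactness, and for this I would use condition $(3)$ of Lemma \ref{tfae}: it suffices to produce, for each $\zeta>0$, a single $\xi<\zeta$ with $\Omega_\zeta(A)=\le^{-\xi+\zeta}\Omega_\xi(A)$. I would split on whether $\zeta$ is a successor or a limit. For $\zeta=\eta+1$, take $\xi=\eta$, so that $-\xi+\zeta=1$ and $\le^1=\le$; the required identity $\Omega_{\eta+1}(A)=\le\,\Omega_\eta(A)$ is exactly Theorem \ref{theorem:SuccessorRelations}.

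For limit $\zeta$ the point is to invert the hyperexponential description supplied by Theorem \ref{theorem3.6}, which gives some $\theta<\zeta$ with $\Omega_\theta(A)=\ex^{-\theta+\zeta}\Omega_\zeta(A)$. Setting $\delta=-\theta+\zeta$ and applying the hyperlogarithm $\le^\delta$, which is a left adjoint of $\ex^\delta$ by Theorem \ref{logexp}, the defining property of left adjoints (namely $g(f(\beta))=\beta$) collapses $\le^\delta\ex^\delta\Omega_\zeta(A)$ to $\Omega_\zeta(A)$, yielding $\Omega_\zeta(A)=\le^{-\theta+\zeta}\Omega_\theta(A)$ with witness $\xi=\theta<\zeta$. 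The main obstacle — really the only step beyond bookkeeping — is this inversion: one must be sure the \emph{forward} formula of Theorem \ref{theorem3.6} can legitimately be read backwards, which hinges precisely on the left-adjoint identity rather than on $\le^\delta$ being a genuine two-sided inverse of $\ex^\delta$. Because Lemma \ref{tfae}$(3)$ asks only for one witnessing $\xi$ per $\zeta$, I never need the identity uniformly for all sufficiently large $\theta$, which keeps the limit case clean and lets the three cited results assemble directly into the conclusion.
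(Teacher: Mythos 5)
Your proposal is correct and takes essentially the same route as the paper's own proof: both verify condition (3) of Lemma \ref{tfae} by splitting into successor coordinates (handled by Theorem \ref{theorem:SuccessorRelations}) and limit coordinates (handled by inverting Theorem \ref{theorem3.6} with the hyperlogarithm). The only cosmetic differences are that you cancel $\le^{\delta}\ex^{\delta}$ directly from the left-adjoint property given by Theorem \ref{logexp} where the paper cites Theorem \ref{cancel} (the same fact, and your version is if anything cleaner since Theorem \ref{cancel} is stated for strict $\xi<\zeta$), and that you make explicit the base value $\Omega_0(A)=o(A)$ via Lemma \ref{theorem:OmegaReducesToO} and the uniqueness step via Lemma \ref{tfae}(1)$\Leftrightarrow$(2), both of which the paper leaves implicit.
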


\proof
In view of Lemma \ref{tfae}, it suffices to show that, given any ordinal $\zeta$, there is ${\x}<\zeta$ such that $\Omega_\zeta({A})=\le^{-{\x}+\zeta}\Omega_{\x}({A})$.

If $\zeta$ is a successor ordinal, write $\zeta={\x}+1$. Then, by Theorem \ref{theorem:SuccessorRelations}, we have that $\Omega_\zeta({A})=\le\Omega_{\x}({A})$.

Meanwhile, if $\zeta$ is a limit ordinal, we know from Lemma \ref{theorem3.6} that, for ${\x}<\zeta$ large enough,
\[
\Omega_{\x}({A}) = \ex^{-\x+\y}\Omega_\zeta({A}).
\]

Applying $\le^{-\x+\zeta}$ on both sides and using Theorem \ref{cancel}, we see that
\[\le^{-\x+\zeta}\Omega_{\x}({A}) = \Omega_\zeta({A}).\]
Thus we can use Lemma \ref{tfae} to see that $\vec\Omega(A)$ is $\le$-exact, so that, for all $\xi$,
\[
\Omega_\xi(A)=\le^\xi\Omega_0(A)=\le^\xi o_0(A),
\]
as claimed.
\qed

Notice by Theorems \ref{istheexact} and \ref{theorem:endLogarithmHyperationRecursiveEquation} that the computations in omega sequences are rather easy if we have written the values in Weak Hyperexponential Normal Form 
(see Lemma \ref{theorem:HyperexponentialNormalForms}) 
and are determined by the last term. 
If, for example, $\Omega_\xi(A) = \alpha + e^{\omega^\zeta}(\beta)$, then the next value where the $\vec{\Omega}(A)$ sequence changes will be in $\xi + \omega^\zeta$ jumping to the new value $\Omega_{\xi + \omega^\zeta}(A)=\beta$.

Further, hyperexponentials give us {\em lower bounds} on $\le$-exact sequences.
The value of $\Omega_\xi(A)$ fully determines the values of $\Omega_\zeta(A)$ for $\zeta > \xi$ but not vice versa. However for $\zeta > \xi$ we do have a lower-bound on $\Omega_\xi(A)$:

\begin{theorem}
Given a worm ${A}$ and ordinals ${\x},\zeta$, $\Omega_{\x}({A})\geq \ex^{\zeta}\Omega_{{\x}+\zeta}({A})$.
\end{theorem}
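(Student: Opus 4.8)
The plan is to reduce the claimed inequality to a single general fact about left adjoints, namely that composing a normal function with its left adjoint always lands below the identity. Once the earlier structural results are in place, no induction on $\Omega$ or on the syntactic structure of $A$ is required.

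First I would invoke Theorem~\ref{istheexact}, which asserts that $\vec\Omega(A)$ is $\le$-exact. Applying the definition of $\le$-exactness to the ordinals $\x$ and $\zeta$ gives immediately
\[
\Omega_{\x+\zeta}(A) = \le^\zeta \Omega_{\x}(A).
\]
Substituting this into the right-hand side of the target inequality, it suffices to prove that $\ex^\zeta \le^\zeta \Omega_{\x}(A) \leq \Omega_{\x}(A)$, i.e.\ that $\ex^\zeta \le^\zeta \alpha \leq \alpha$ for the particular ordinal $\alpha = \Omega_{\x}(A)$.

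Next I would recall from Theorem~\ref{logexp} that $\le^\zeta$ is a left adjoint of the hyperexponential $\ex^\zeta$, and then establish the elementary Galois-type inequality valid for any normal function $f$ with left adjoint $g$: for every ordinal $\alpha$ one has $f(g(\alpha)) \leq \alpha$. This follows directly from the paper's definition of left adjoint: were $\alpha < f(g(\alpha))$, then putting $\beta = g(\alpha)$ in clause~2 of the definition would force $g(\alpha) < \beta = g(\alpha)$, a contradiction; hence $f(g(\alpha)) \leq \alpha$. Applying this with $f = \ex^\zeta$ and $g = \le^\zeta$ at $\alpha = \Omega_{\x}(A)$ yields $\ex^\zeta \le^\zeta \Omega_{\x}(A) \leq \Omega_{\x}(A)$, and combining with the displayed equation gives
\[
\ex^\zeta \Omega_{\x+\zeta}(A) = \ex^\zeta \le^\zeta \Omega_{\x}(A) \leq \Omega_{\x}(A),
\]
which is exactly the assertion.

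The only real obstacle is conceptual rather than computational: recognizing that the entire content of the statement is packaged in the $\le$-exactness of $\vec\Omega(A)$ (Theorem~\ref{istheexact}) together with the adjunction between $\ex^\zeta$ and $\le^\zeta$ (Theorem~\ref{logexp}). The residual work is just the one-line derivation of $f(g(\alpha)) \leq \alpha$ from the definition of a left adjoint. I would also remark that equality holds precisely when $\Omega_{\x}(A)$ lies in the range of $\ex^\zeta$, mirroring the fact that the $\vec\Omega(A)$ sequence is determined downward only up to such fixpoint ambiguities.
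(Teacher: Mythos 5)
Your proof is correct and takes essentially the same route as the paper: both combine the $\le$-exactness of $\vec\Omega(A)$ (Theorem \ref{istheexact}) with the left-adjointness of $\le^\zeta$ to $\ex^\zeta$ (Theorem \ref{logexp}). The paper merely packages the adjunction step as a proof by contradiction --- assuming $\Omega_{\x}(A) < \ex^{\zeta}\Omega_{\x+\zeta}(A)$, deriving $\le^{\zeta}\Omega_{\x}(A) < \Omega_{\x+\zeta}(A)$ from clause~2 of the left-adjoint definition, and contradicting the exactness identity $\le^{\zeta}\Omega_{\x}(A)=\Omega_{\x+\zeta}(A)$ --- which is precisely the contrapositive of your direct inequality $\ex^{\zeta}\le^{\zeta}\alpha\leq\alpha$.
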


\proof
Towards a contradiction, assume that there is a worm $A$ and ordinals $\xi<\zeta$ such that $\Omega_\x(A)<\ex^{{-{\x}+\zeta}}\Omega_\zeta(A)$. Then, by Theorem \ref{logexp}, $\le^{-\x+\zeta}\Omega_\x(A)<\Omega_\zeta(A)$.

But this is impossible by Theorem \ref{istheexact}, given that $\le^{-\x+\zeta}\Omega_\x(A)=\Omega_\zeta(A)$.
\qed

\section{Turing progressions revisited}\label{section:TuringRevisited}


In this section we shall interpret our omega sequences in $\glp_\omega$ in terms of Turing progressions. Before doing so, we first need to introduce a slightly generalized notion of Turing progressions where we transfinitely iterate $i$-consistency rather than normal consistency:

\[
\begin{array}{llll}
T_0^i &:=& T;  \\
T_{\alpha +1}^i & :=& T_\alpha^i \cup \{ \la i \ra_{T_\alpha^i} \top\}; & \\
T_\lambda & := & \bigcup_{\alpha < \lambda} T_\alpha & \mbox{for limit $\lambda$.} 
\end{array}
\]

In this section we shall always assume that $\glp_\omega$ is sound w.r.t.\ the base theory $T$. Generalized Turing progressions are not sensitive to adding ``small" elements to the base theory as is expressed by the following lemma. 

\begin{lemma}\label{theorem:GeneralizedTuringProgressionsInvariantToSmallChangesBaseTheory}
For $T$ an elementary presented theory and for any $GLP_\omega$ worm $A$, if $m<n$, then 
\[
(T+mA)_\alpha^n \equiv (T)_\alpha^n +mA \ \ \ \ \ \mbox{for any $\alpha < \epsilon_0$}.
\]
\end{lemma}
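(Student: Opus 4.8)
The plan is to proceed by transfinite induction on $\alpha < \epsilon_0$, with the successor step carrying all the genuine content and the base and limit steps being routine. For $\alpha = 0$ both sides are literally $T + mA$. For a limit $\lambda$, I would use that the progressions are increasing unions: a sentence is provable in $(T+mA)^n_\lambda = \bigcup_{\beta<\lambda}(T+mA)^n_\beta$ iff it is provable in some $(T+mA)^n_\beta$, which by the induction hypothesis holds iff it is provable in $T^n_\beta + mA$ for some $\beta < \lambda$, i.e. iff it is provable in $T^n_\lambda + mA$. So the two limit theories prove the same sentences.

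For the successor step I abbreviate $U := T^n_\alpha$ and assume the induction hypothesis $(T+mA)^n_\alpha \equiv U + mA$. Unwinding the definition, and using that the $n$-consistency statements of elementarily presented, provably coextensive theories are provably equivalent, I obtain
\[
(T+mA)^n_{\alpha+1} \ \equiv\ U + mA + \la n \ra_{U+mA}\top,
\]
whereas $T^n_{\alpha+1} + mA = U + mA + \la n \ra_U \top$. Hence it suffices to show that, over $U + mA$, the two statements $\la n \ra_{U+mA}\top$ and $\la n \ra_U\top$ coincide. The formalized deduction theorem for the $n$-provability predicate gives $\la n\ra_{U+mA}\top \leftrightarrow \la n \ra_U (mA)$, so the task reduces to proving $U + mA \vdash \la n \ra_U(mA) \leftrightarrow \la n \ra_U \top$.

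This is exactly where the hypothesis $m < n$ enters. The implication $\la n \ra_U(mA) \to \la n \ra_U \top$ is monotonicity of consistency and holds provably. For the converse, I would invoke the \glp-axiom $\la m \ra A \to [n]\la m \ra A$, valid since $m < n$; its arithmetical soundness yields $\ea \vdash mA \to [n]_U(mA)$, so $U + mA \vdash [n]_U(mA)$, i.e. $mA$ is $n$-provable from $U$. Once $mA$ is $n$-provable from $U$, one has $[n]_U(mA) \wedge \la n \ra_U \top \to \la n \ra_U(mA)$: otherwise $[n]_U(mA)$ together with $[n]_U\neg(mA)$ would force $[n]_U\bot$, contradicting $\la n \ra_U\top$. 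This closes the successor step and hence the induction.

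The hard part will not be the modal core but the arithmetical bookkeeping that legitimizes these manipulations uniformly along the progression: I must ensure that each iterate $T^n_\alpha$ admits an elementary presentation for which the formalized deduction theorem and the invariance of $\la n \ra_{\cdot}\top$ under $\equiv$ are available, and that the soundness of the relevant \glp-axioms holds over every $U = T^n_\alpha$ (which follows because each such $U$ extends $\ea$ and $[n]_\ea \phi \to [n]_U \phi$). It is precisely this demand for well-behaved elementary presentations of the transfinite iterates that confines the argument to $\alpha < \epsilon_0$, the range in which $\glp_\omega$-worms supply the requisite ordinal notations.
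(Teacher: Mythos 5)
Your proof is correct and takes essentially the same route as the paper's: the paper also proceeds by transfinite induction with only the successor case carrying content, uses the deduction theorem to rewrite $\la n\ra_{T^n_\alpha+mA}\top$ as $\la n\ra_{T^n_\alpha}(\top\wedge mA)$, and then strips off $mA$ by Lemma \ref{lemma:basicLemma}.\ref{lemma:basicLemma1} --- whose own proof is precisely your appeal to the axiom $\la m\ra A\to[n]\la m\ra A$ for $m<n$ together with K-reasoning for $[n]$. The arithmetical bookkeeping you flag (elementary presentations and provable coextensiveness along the progression) is left implicit in the paper, and your treatment of it is sound.
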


\proof
By transfinite induction on $\alpha$. The only interesting case is at successor ordinals.
\[
\begin{array}{llll}
(T+mA)_{\alpha+1}^n & \equiv_{\sf def}& (T+mA)_{\alpha}^n +\la n\ra_{(T+mA)_{\alpha}^n} \top& \ \\
\  & \equiv_{\sf IH} & T_{\alpha}^n +mA +\la n\ra_{T_{\alpha}^n +mA} \top& \\ 
\  & \equiv & T_{\alpha}^n +mA +\la n\ra_{T_{\alpha}^n} (\top\wedge mA)& \mbox{by Lemma \ref{lemma:basicLemma}}\\ 
\  & \equiv & T_{\alpha}^n +mA +\la n\ra_{T_{\alpha}^n} (\top)& \\
\  & \equiv & T_{\alpha+1}^n +mA & \\
\end{array}
\]
\qed

We shall need a generalization of Proposition \ref{theorem:generalizedReductionLemma} which can be found in \cite{Beklemishev:2005:Survey}. In this section, $U \equiv_n V$ will denote that the theories $U$ and $V$ prove exactly the same $\Pi_{n+1}$ sentences.

\begin{lemma}\label{theorem:wormsAndGeneralizedTuringProgressions}
Let $T$ be some elementary presented theory containing $\ea^+$ whose axioms have logical complexity at most $\Pi_{n+1}$ and let $A$ be some worm in $S_n$. We have that 
\[
T+ A \equiv_n T^n_{o_n(A)}.
\]
\end{lemma}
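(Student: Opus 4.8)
The statement to prove is Lemma~\ref{theorem:wormsAndGeneralizedTuringProgressions}:
\[
T+ A \equiv_n T^n_{o_n(A)},
\]
where $A\in S_n$ and $T$ contains $\ea^+$ with axioms of complexity at most $\Pi_{n+1}$.

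The plan is to proceed by transfinite induction on the $<_n$-ordering of worms in $S_n$, which by Lemma~\ref{theorem:oDefinesAnIsomorphism} and Theorem~\ref{theorem:OrderTypeCalculus}.\ref{down} is equivalent to induction on the ordinal $o_n(A)$. The base case is $A=\lambda$ (the empty worm), where $o_n(\lambda)=0$ and $T+\lambda = T = T^n_0$ trivially. The heart of the argument is the successor and limit structure of $o_n$, which mirrors the recursive definition of the generalized Turing progression $T^n_\alpha$. First I would establish a reduction principle analogous to Proposition~\ref{theorem:reductionProperty}: namely that for a worm $A\in S_{n+1}$ (one all of whose modalities exceed $n$), adding $\la n\ra A$ corresponds to iterating $n$-consistency over $T+A$. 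Concretely, I expect to use the key normal-form fact that any worm $A\in S_n$ can be decomposed via its $n$-head and $n$-remainder, and more usefully written in Beklemishev Normal Form as $A = A_k\, n\, \ldots\, n\, A_1$ with each $A_i\in S_{n+1}$, so that by Lemma~\ref{lemma:basicLemma}.\ref{lemma:basicLemma2} the worm factors as a conjunction interleaved with $n$-consistency statements.

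The crucial step is to connect the ordinal arithmetic of $o_n$ to the transfinite recursion defining $T^n_\alpha$. By Theorem~\ref{theorem:OrderTypeCalculus}.\ref{down} we have $o_n(A)=o(n{\downarrow}A)$, and the calculus in items (1)--(2) of that theorem expresses $o_n(A)$ as a sum of $\omega$-powers of the order-types of the demoted blocks $n{\downarrow}A_i$. I would match this against the Cantor-normal-form structure that governs how $T^n_\alpha$ behaves at successor steps ($T^n_{\alpha+1}=T^n_\alpha+\la n\ra_{T^n_\alpha}\top$) versus limits (unions). The essential arithmetical identity to exploit is that $o_n$ of a worm ending in a single extra $\la n\ra$ increases the order-type in exactly the way that one step of $n$-consistency-iteration increases the progression index; and that for worms whose minimal modality strictly exceeds $n$, passing to $T+A$ corresponds to replacing the base theory in the progression, using Lemma~\ref{theorem:GeneralizedTuringProgressionsInvariantToSmallChangesBaseTheory} to absorb such ``large'' worm-fragments into the base theory without disturbing the $n$-progression. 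This invariance lemma is precisely what lets the induction go through at blocks separated by the modality $n$.

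The main obstacle I anticipate is the limit case and, relatedly, the precise bookkeeping of $\Pi_{n+1}$-conservativity rather than mere equivalence. At limit stages of the progression, $T^n_\lambda=\bigcup_{\alpha<\lambda}T^n_\alpha$, and I must verify that $T+A$ proves exactly the same $\Pi_{n+1}$ sentences as this union; this requires that the $\Pi_{n+1}$-consequences of $T+A$ are captured by the cofinal family of approximations, which in turn relies on the reduction property (Proposition~\ref{theorem:reductionProperty}) being applied uniformly and on the soundness of $\glp_\omega$ over $T$. The delicate point is that $\equiv_n$ is conservativity of $\Pi_{n+1}$-sentences, so at each inductive step I cannot simply substitute provably equivalent theories; I must track which complexity class of consequences is preserved. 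I would handle this by invoking the reduction property in the form that $T+\la n{+}1\ra A'$ is $\Pi_{n+1}$-conservative over the family $\{T+\la n\ra^k A' \mid k\in\omega\}$ (Proposition~\ref{theorem:reductionProperty} generalized), thereby reducing a worm with a higher modality to a supremum over lower ones, which matches the limit behaviour of the progression at $o_n(A)$ when $o_n(A)$ is a limit ordinal. Assembling these pieces — successor steps via one application of $n$-consistency, limit steps via $\Pi_{n+1}$-conservativity of the reduction property, and the absorption of large worm-fragments via Lemma~\ref{theorem:GeneralizedTuringProgressionsInvariantToSmallChangesBaseTheory} — completes the induction.
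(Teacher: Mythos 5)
First, a point of comparison: the paper does not prove this lemma at all --- it is stated as imported background from \cite{Beklemishev:2005:Survey}, so there is no in-paper argument to match your sketch against. Your outline is essentially a reconstruction of Beklemishev's proof, and most of its ingredients are the right ones: induction along $o_n$ (legitimized by Lemma \ref{theorem:oDefinesAnIsomorphism} and Theorem \ref{theorem:OrderTypeCalculus}.\ref{down}), the \BNF decomposition $A=A_k\, n\,\cdots\, n\, A_1$ with $A_i\in S_{n+1}$ factoring $A$ via Lemma \ref{lemma:basicLemma}, the matching of the Cantor normal form of $o_n(A)$ against the additive composition of progressions, and the reduction property as the engine supplying $\Pi_{n+1}$-conservativity.

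However, one step as written is wrong, and it is load-bearing: you propose to handle blocks whose minimal modality exceeds $n$ by absorbing them into the base theory ``without disturbing the $n$-progression,'' citing Lemma \ref{theorem:GeneralizedTuringProgressionsInvariantToSmallChangesBaseTheory}. That lemma only covers additions $mB$ with $m<n$; a worm in $S_{n+1}$ is a $\Pi_{n+2}$ sentence, so adding it to the base violates the standing hypothesis that the base is axiomatized by formulas of complexity at most $\Pi_{n+1}$, and the invariance claim itself is false. Concretely, take $n=0$ and the fragment $\la 1\ra\top$: since $\la 0\ra_{T+\la1\ra\top}\top$ is $\la0\ra\la1\ra\top$ and $\la1\ra\top\wedge\la0\ra\la1\ra\top\leftrightarrow\la1\ra\la0\ra\la1\ra\top$ by Lemma \ref{lemma:basicLemma}, we get $(T+\la1\ra\top)^0_1\equiv T+\la1\ra\la0\ra\la1\ra\top$, which proves the $\Pi_1$ sentence ${\sf Con}(T+\la1\ra\top)$; but $T^0_1+\la1\ra\top\equiv T+\la1\ra\top$ cannot prove that sentence, by G\"odel's Second Incompleteness Theorem. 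So the two sides differ already at the $\Pi_1$ level after a single step. Large blocks can only be eliminated via the reduction property, applied transfinitely and nested (one application trades a single $\la n{+}1\ra$ for $\omega$-many iterations of $\la n\ra$, so modalities $n+2,n+3,\ldots$ force an induction on $o_n(A)$ through cofinal approximations); the absorption lemma is reserved --- exactly as in the paper's own Lemma \ref{theorem:addingSmallWormsToT+A} and Corollary \ref{theorem:T+AIsAlmostATuringProgression} --- for the \emph{small} remainder $r_n(A)$, never the head. A second, lesser omission: to iterate at all, you must interchange $\la n\ra_U\top$ and $\la n\ra_V\top$ for $\Pi_{n+1}$-equivalent $U,V$ \emph{inside} arithmetized consistency statements, which requires the conservation results to be formalized and verifiable in $\ea^+$; this is precisely why $\ea^+$ appears in the hypotheses, and your sketch does not track it.
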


In general we do of course not have that if $U\equiv_{n}V$, then $U + \psi \equiv_n V + \psi$ for theories $U$ and $V$ and formulas $\psi$. However, in the case of Turing progression we can add ``small" additions on both sides and preserve conservativity.

\begin{lemma}\label{theorem:addingSmallWormsToT+A}
Let $T$ be some elementary presented theory containing $\ea^+$ whose axioms have logical complexity at most $\Pi_{n+1}$ and let $A$ be some worm in $S_n$. Moreover, let $B$ be any worm and $m<n$. We have that 
\[
T + A + mB  \equiv_n T^n_{o_n(A)} + mB.
\] 
\end{lemma}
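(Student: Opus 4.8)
The plan is to reduce the statement to the two preceding lemmas by treating $mB$ as a new axiom of the base theory rather than as a side addition. Concretely, I would apply Lemma \ref{theorem:wormsAndGeneralizedTuringProgressions} not to $T$ but to the enlarged base theory $T + mB$, and then invoke Lemma \ref{theorem:GeneralizedTuringProgressionsInvariantToSmallChangesBaseTheory} to pull $mB$ back out of the resulting Turing progression. The whole argument is then a chain of equivalences, using the trivial fact that $(T + mB) + A$ and $T + A + mB$ are literally the same theory.

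First I would verify that $T + mB$ still satisfies the hypotheses of Lemma \ref{theorem:wormsAndGeneralizedTuringProgressions} for the same $n$: it remains elementary presented and contains $\ea^+$, so the only thing to check is that the extra axiom $mB$ has logical complexity at most $\Pi_{n+1}$. This is exactly where the assumption $m<n$ enters. Writing $mB = \la m \ra B$, its arithmetical reading is $\neg [m]_T \neg \psi$, where $\psi$ is the interpretation of $B$. Since the formula under the modality occurs inside the provability predicate only as a syntactic object (a numeral), the complexity of $\la m \ra B$ is governed solely by the leading modality and is independent of how large the modalities inside $B$ are. Thus $mB$ is $\Pi_{m+1}$, and as $m<n$ we have $\Pi_{m+1}\subseteq\Pi_{n+1}$, as required.

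With this in place, applying Lemma \ref{theorem:wormsAndGeneralizedTuringProgressions} to the base theory $T + mB$ and the worm $A \in S_n$ gives
\[
(T + mB) + A \ \equiv_n \ (T + mB)^n_{o_n(A)}.
\]
Next, since $A \in S_n$ is a $\glp_\omega$-worm we have $o_n(A) < \epsilon_0$ (for instance via $o_n(A) = o(n{\downarrow}A)$ with $n{\downarrow}A$ again a $\glp_\omega$-worm), so Lemma \ref{theorem:GeneralizedTuringProgressionsInvariantToSmallChangesBaseTheory}, applied with base theory $T$, small worm $mB$, and $\alpha = o_n(A)$, yields
\[
(T + mB)^n_{o_n(A)} \ \equiv \ T^n_{o_n(A)} + mB.
\]
Chaining the two displays and using $(T + mB) + A = T + A + mB$ gives $T + A + mB \equiv_n T^n_{o_n(A)} + mB$; here the full equivalence $\equiv$ delivered by Lemma \ref{theorem:GeneralizedTuringProgressionsInvariantToSmallChangesBaseTheory} is subsumed by $\equiv_n$, so the composite is an $\equiv_n$-equivalence.

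The only genuinely delicate point is the complexity bookkeeping in the first step: one must be certain that prepending a single modality $\la m \ra$ with $m<n$ keeps the worm within $\Pi_{n+1}$ no matter how large the modalities occurring inside $B$ are. Once this is settled, the remainder is a purely formal composition of the two cited lemmas together with the commutativity of adjoining axioms.
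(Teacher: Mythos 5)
Your proposal is correct and follows essentially the same route as the paper's own proof: apply Lemma \ref{theorem:wormsAndGeneralizedTuringProgressions} to the enlarged base theory $T+mB$ (using $m<n$ to keep the axioms within $\Pi_{n+1}$) and then invoke Lemma \ref{theorem:GeneralizedTuringProgressionsInvariantToSmallChangesBaseTheory} to extract $mB$ from the progression. Your additional bookkeeping --- verifying that $\la m\ra B$ is $\Pi_{m+1}$ regardless of the modalities inside $B$, and that $o_n(A)<\epsilon_0$ so the invariance lemma applies --- is care the paper leaves implicit, but it is the same argument.
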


\proof
As $m<n$ we have that $mB \in \Pi_{n}$. Whence, we can apply Lemma \ref{theorem:wormsAndGeneralizedTuringProgressions} to the theory $T+mB$ and obtain
\[
T + mB + A \equiv_n (T+ mB)^n_{o_n(A)}
\] 
However, by Lemma \ref{theorem:GeneralizedTuringProgressionsInvariantToSmallChangesBaseTheory} we see that 
\[
(T+ mB)^n_{o_n(A)}\equiv T^n_{o_n(A)} + mB, \ \mbox{ whence } \ T + mB + A \equiv_n T^n_{o_n(A)} + mB.
\]
\qed
From this lemma we obtain the following simple but very useful corollary.

\begin{cor}\label{theorem:T+AIsAlmostATuringProgression}
Let $T$ be some elementary presented theory containing $\ea^+$ whose axioms have logical complexity at most $\Pi_{n+1}$. Moreover, let $A$ be any worm. We have that 
\[
T + A \equiv_n T^n_{\Omega_n(A)} + r_n(A).
\]
\end{cor}

\proof
We know that $\glp \vdash A \leftrightarrow h_n(A) \wedge r_n(A)$. As by assumption $\glp_\omega$ is sound w.r.t.\ $T$ we see that 
\[
T+ A \equiv T+ h_n(A) + r_n(A).
\]
The worm $r_n(A)$ is either empty or of the form $mA$ for some $m<n$. Clearly, $h_n(A) \in S_n$. Thus, we can apply Lemma \ref{theorem:addingSmallWormsToT+A} and obtain
\[
T+ h_n(A) + r_n(A) \equiv_n T^n_{o_n(h_n(A))} + r_n(A).
\]
However, by Lemma \ref{theorem:OmegaReducesToO} we know that $o_n(h_n(A)) = \Omega_n(A)$ and we are done.
\qed

From Lemma \ref{theorem:wormsAndGeneralizedTuringProgressions} we see that we can capture the $\Pi_1^0$ consequences of the $o(A)$-th Turing Progression of $T$ by the simply axiomatized theory $T+A$. Thus, $T+A$ proves the same $\Pi^0_1$ formulas as $T^0_{o(A)}$. However, $T+A$ will in general prove many new formulas of higher complexity. We can characterize those consequences of $T+A$ also in terms of Turing progressions and the way to do so is simply given by our $\Omega$-sequences.

\begin{theorem}\label{theorem:OmegaSequenceInTuringProgressions}
Let $T$ be some $\Pi_1^0$ axiomatizable elementary representable theory containing $\ea^+$. Let $A$ be any $\glp_\omega$ worm. We have that 
\[
T + A \equiv \bigcup_{i< \omega} T^i_{\Omega_i(A)}.
\]
\end{theorem}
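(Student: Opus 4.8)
The plan is to reduce the claimed full deductive equivalence to a family of conservation statements indexed by the arithmetical hierarchy. Since every arithmetical sentence lies in $\Pi_{n+1}$ for some $n$, two theories are deductively equivalent precisely when they prove the same $\Pi_{n+1}$-sentences for every $n$; that is, $T+A\equiv\bigcup_{i<\omega}T^i_{\Omega_i(A)}$ holds iff $T+A\equiv_n\bigcup_{i<\omega}T^i_{\Omega_i(A)}$ for all $n$. Because $T$ is $\Pi_1^0$-axiomatizable, its axioms have complexity at most $\Pi_{n+1}$ for every $n$, so Corollary \ref{theorem:T+AIsAlmostATuringProgression} applies uniformly and gives $T+A\equiv_n T^n_{\Omega_n(A)}+r_n(A)$. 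Writing $U:=\bigcup_{i<\omega}T^i_{\Omega_i(A)}$, it therefore suffices to establish, for each $n$, that $U\equiv_n T^n_{\Omega_n(A)}+r_n(A)$. I would first record that, by Corollary \ref{theorem:MaximalCoordinateOfSequences}, the union $U$ is effectively finite: $\Omega_i(A)=0$ and hence $T^i_{\Omega_i(A)}=T$ whenever $i>{\sf First}(A)$.

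For the direction that $U$ proves $T^n_{\Omega_n(A)}+r_n(A)$, the first conjunct is immediate since $T^n_{\Omega_n(A)}$ is literally the $i=n$ component of $U$. To see that $U\vdash r_n(A)$ I would run an outer induction on the length of $A$ (the base case $A=\lambda$ being trivial), noting that $r_n(A)$ is a strictly shorter worm. The point is that $U$ already contains the union $U_{r_n(A)}:=\bigcup_i T^i_{\Omega_i(r_n(A))}$ associated with $r_n(A)$; this follows from the monotonicity $\Omega_i(r_n(A))\leq\Omega_i(A)$ for every $i$, which I would derive from the decomposition $h_i(A)=h_n(A)\ast h_i(r_n(A))$ (valid for $i<n$ since $h_n(A)\in S_n$) together with the fact that $o_i$ does not decrease when passing from a worm to one of its suffixes (Lemma \ref{theorem:uparrowProperties}.\ref{item:SmallerRelationExtendsLargerRelation:theorem:uparrowProperties} and the $\glp$-provable implication from a worm to its tail), while for $i\geq n$ one has ${\sf First}(r_n(A))<n\leq i$ and so $\Omega_i(r_n(A))=0$. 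The induction hypothesis then yields $U_{r_n(A)}\vdash r_n(A)$, whence $U\vdash r_n(A)$.

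The opposite, harder direction asks that every $\Pi_{n+1}$-theorem of $U$ be provable in $T^n_{\Omega_n(A)}+r_n(A)$, equivalently (via Corollary \ref{theorem:T+AIsAlmostATuringProgression}) that $T+A$ proves all $\Pi_{n+1}$-theorems of $U$. Here I would use that $\glp\vdash A\to h_i(A)$ to reduce the task to bounding the $\Pi_{n+1}$-content of each individual progression $T^i_{\Omega_i(A)}$. For $i\leq n$ this content is already carried by $T+A$ through $h_n(A)\wedge r_n(A)$; the delicate case is $i>n$, where one must show that the higher progression contributes no new $\Pi_{n+1}$-theorems. This is exactly where the reduction property (Proposition \ref{theorem:reductionProperty}) is invoked: iterating it to descend from level $i$ down to level $n$ telescopes into a single $\Pi_{n+1}$-conservation, and the lengths at successive levels match with no slack precisely because $\vec\Omega(A)$ is $\le$-exact (Theorem \ref{istheexact}). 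I expect this collapsing of the higher-level progressions onto $T^n_{\Omega_n(A)}$ to be the main obstacle: one must track the interaction between the syntactic operations $h_i,r_i$ and the arithmetical conservation results carefully enough that the iterated reductions land exactly on the value $\Omega_n(A)=o_n h_n(A)$ dictated by Lemma \ref{theorem:OmegaReducesToO}, rather than overshooting or undershooting it.
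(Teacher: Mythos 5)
Your reduction to the family of statements $U\equiv_n T^n_{\Omega_n(A)}+r_n(A)$ via Corollary \ref{theorem:T+AIsAlmostATuringProgression} is on target, but you have located the difficulty in exactly the wrong place. The direction you call ``harder'' is in fact immediate and needs neither Proposition \ref{theorem:reductionProperty} nor exactness: apply Corollary \ref{theorem:T+AIsAlmostATuringProgression} at level $i$ itself (not at the fixed level $n$) to get $T+A\equiv_i T^i_{\Omega_i(A)}+r_i(A)$; since every axiom of the right-hand theory is of complexity at most $\Pi_{i+1}$, this $\Pi_{i+1}$-equivalence yields outright provability $T+A\vdash T^i_{\Omega_i(A)}$ for \emph{every} $i$, hence $T+A\vdash U$ and a fortiori all $\Pi_{n+1}$-theorems of $U$. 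Your telescoping plan, besides being unnecessary, would not suffice as sketched: a $\Pi_{n+1}$-theorem of $U$ may use axioms from several components of the union simultaneously, and component-wise conservation statements do not combine into conservation of the union without something like Lemma \ref{theorem:addingSmallWormsToT+A}; moreover Proposition \ref{theorem:reductionProperty} concerns $\omega$-iterates only, so the descent from level $i$ to level $n$ for arbitrary ordinals $\Omega_i(A)$ would anyway have to be rebuilt from Lemma \ref{theorem:wormsAndGeneralizedTuringProgressions}. The ``axioms are $\Pi_{i+1}$, so conservativity gives provability'' observation is precisely the move the paper makes inside its induction on $n$.

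The genuine gap is in the direction you call easy. Your outer induction on the length of $A$ rests on the assertion that $r_n(A)$ is strictly shorter than $A$, which fails exactly when $h_n(A)=\lambda$, i.e., for every $n>{\sf First}(A)$: there $r_n(A)=A$, $\Omega_n(A)=0$, and your target $U\vdash r_n(A)$ becomes $U\vdash A$ --- essentially the full content of one direction of the theorem --- on which the length induction and the monotonicity argument give no purchase. Note that the top-level component alone does not do it: $T^m_{\Omega_m(A)}\nvdash h_m(A)$ in general, e.g.\ $T^0_\omega\nvdash\la 1\ra\top$ although $\Omega_0(\la 1\ra\top)=\omega$. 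The hole is patchable --- once the equivalence is secured at level $m={\sf First}(A)$, the sentence $A$ is itself $\Pi_{m+1}$ (its outermost modality is $\la m\ra$), so $U\equiv_m T+A$ yields $U\vdash A$ and with it all levels $n>m$ --- but you must say this, and it is exactly what the paper's induction on $n$, with hypothesis $T+A\equiv_n\bigcup_{i\le n}T^i_{\Omega_i(A)}$, delivers for free: $\bigcup_{i\le n}T^i_{\Omega_i(A)}\vdash r_{n+1}(A)$ because $r_{n+1}(A)\in\Pi_{n+1}$ and $T+A\vdash r_{n+1}(A)$, with no worm-length induction and no monotonicity of $\Omega_i$ along remainders needed at all. (A minor further point: your justification of that monotonicity by ``the $\glp$-provable implication from a worm to its tail'' is false in general, e.g.\ $\glp\nvdash\la 0\ra\la 1\ra\top\to\la 1\ra\top$; it is valid for the split $h_i(A)=h_n(A)\ast h_i(r_n(A))$ only because the junction symbol is smaller than every symbol of $h_n(A)$, so that Lemma \ref{lemma:basicLemma} applies.)
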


\proof
We prove by induction on $n$ that 
\[
T + A \equiv_n \bigcup_{i=0}^{n} T^i_{\Omega_i(A)}.
\]
This is clearly sufficient as for any $\glp_\omega$ worm $A$ there are only finitely non-zero entries in $\vec{\Omega}(A)$. The base case follows directly from Lemma \ref{theorem:wormsAndGeneralizedTuringProgressions} since $\Omega_0(A) = o_0(A)$.

For the inductive case we reason as follows. By Corollary \ref{theorem:T+AIsAlmostATuringProgression} we know that
\begin{equation}\label{IAmTiredOfTheseLengthyLables}
T + A \equiv_{n+1} T^{n+1}_{\Omega_{n+1}(A)} + r_{n+1}(A).
\end{equation}
In particular, as $T^{n+1}_{\Omega_{n+1}(A)} + r_{n+1}(A) \subseteq \Pi_{n+2}$ we see that actually, $T+A$ is a $\Pi_{n+2}$-conservative extension of $T^{n+1}_{\Omega_{n+1}(A)} + r_{n+1}(A)$, and 
\[
T + A \vdash T^{n+1}_{\Omega_{n+1}(A)} + r_{n+1}(A).
\]
The induction hypothesis tells us that 
\begin{equation}\label{shortLabel}
T + A \equiv_n \bigcup_{i=0}^{n} T^i_{\Omega_i(A)}.
\end{equation}
Again, since $\bigcup_{i=0}^{n} T^i_{\Omega_i(A)} \subseteq \Pi_{n+1}$ we obtain that 
\[
T + A \vdash \bigcup_{i=0}^{n} T^i_{\Omega_i(A)}.
\]
Thus, $T+A \vdash \bigcup_{i=0}^{n+1} T^i_{\Omega_i(A)}$ and in particular, if $\bigcup_{i=0}^{n+1} T^i_{\Omega_i(A)} \vdash \pi$ then $T+A \vdash \pi$ for $\pi \in \Pi_{n+2}$. 

Conversely, assume that $T+A \vdash \pi$ for some $\Pi_{n+2}$ sentence $\pi$. By \eqref{IAmTiredOfTheseLengthyLables} we see that $T^{n+1}_{\Omega_{n+1}(A)} + r_{n+1}(A) \vdash \pi$. However, $r_{n+1}(A) \in \Pi_{n+1}$ and $T+A \vdash r_{n+1}(A)$ so, by \eqref{shortLabel} we see that $\bigcup_{i=0}^{n} T^i_{\Omega_i(A)} \vdash r_{n+1}(A)$. Thus 
\[
\begin{array}{lll}
\bigcup_{i=0}^{n+1} T^i_{\Omega_i(A)} & \vdash & T^{n+1}_{\Omega_{n+1}(A)} + r_{n+1}(A) \\
\ & \vdash & \pi.
\end{array}
\]
as was required.

\qed

In order to obtain a generalization of Theorem \ref{theorem:OmegaSequenceInTuringProgressions} for worms $A$ in $\glp_\Lambda$ for recursive $\Lambda > \omega$ one first would need suitable (hyper)arithmetical interpretations for which $\glp_\Lambda$ is sound and complete. In \cite{FernandezJoosten:2013:OmegaRuleInterpretationGLP} the authors show that such an interpretation exists. A next step would be to establish the necessary conservation properties. However, the modal reasoning for Theorem \ref{theorem:OmegaSequenceInTuringProgressions} entirely carries over to the more general setting of $\glp_\Lambda$.


\bibliographystyle{plain}
\bibliography{References}

\end{document}